\newcommand{\RigSH}{\textup{RigSH}}
\newcommand{\RigH}{\textup{RigH}}
\newcommand{\RigDA}{\mathrm{RigDA}}
\newcommand{\DA}{\mathrm{DA}}
\newcommand{\RigDAA}{\RigDA_{\et}^{\AA^1}}
\newcommand{\RigDAAeff}{\RigDA_{\et}^{\AA^1,\mathrm{eff}}}
\newcommand{\RigDAB}{\RigDA_{\et}^{\BB^1}}
\newcommand{\RigDABeff}{\RigDA_{\et}^{\BB^1,\mathrm{eff}}}
\newcommand{\Sm}{\textup{Sm}}
\newcommand{\Rig}{\textup{Rig}}
\newcommand{\SmRig}{\textup{SmRig}}
\newcommand{\FSch}{\textup{FSch}}
\newcommand{\Pro}{\textup{Pro}}
\newcommand{\Cond}{\textup{Cond}}
\newcommand{\Spc}{\textup{Spc}}
\newcommand{\PSh}{\textup{PSh}}
\newcommand{\Sh}{\textup{Sh}}
\newcommand{\qcqs}{\textup{qcqs}}
\newcommand{\Nis}{\textup{Nis}}
\newcommand{\an}{\textup{an}}
\newcommand{\ft}{\textup{ft}}
\newcommand{\lft}{\textup{lft}}
\newcommand{\aup}{\mathrm{a}}
\newcommand{\Hup}{\mathrm{H}}
\newcommand{\rig}{\textup{rig}}
\newcommand{\eff}{\textup{eff}}
\newcommand{\lcomp}{\ell\textup{-comp}}
\newcommand{\K}{\textup{K}}
\newcommand{\Kcont}{\K^\textup{cont}}
\newcommand{\lbrr}{\lbrace\!\lbrace}
\newcommand{\rbrr}{\rbrace\!\rbrace}
\newcommand{\LB}{\L_{\BB^1}}
\newcommand{\Arig}{\AA^{1}}
\newcommand{\Anrig}{\AA^{n}}
\newcommand{\Brig}{\BB^{1}}
\newcommand{\Bnrig}{\BB^{n}}
\newcommand{\skp}[1]{\langle #1 \rangle}
\renewcommand{\to}[1][]{\overset{#1}{\rightarrow}}	
\newcommand{\To}[1][]{\overset{#1}{\longrightarrow}}	
\newcommand{\inj}[1][]{\overset{#1}{\hookrightarrow}}		
\renewcommand{\L}{\textup{L}}
\newcommand{\Lmot}{\L_{\textup{mot}}}
\newcommand{\es}{\emptyset}
\newcommand{\PrL}{\textup{Pr}^{\textup{L}}}
\newcommand{\PrLO}{\textup{Pr}^{\textup{L},\otimes}}
\newcommand{\CAlgPr}{\textup{CAlg}(\PrLO)}
\newcommand{\infcat}{category}
\newcommand{\infcats}{categories}
\newcommand{\carremap}[8]{\[\begin{xy}\xymatrix{#1\ar[r]^{#2}\ar[d]_{#3}&#4\ar[d]^{#5}\\#6\ar[r]_{#7}&#8}\end{xy}\]}
\newcommand{\carremaptag}[9]{\begin{align}\begin{xy}\tag{#1}\xymatrix{#2\ar[r]^{#3}\ar[d]_{#4}&#5\ar[d]^{#6}\\#7\ar[r]_{#8}&#9}\end{xy}\end{align}}
\date{}
\title{Towards $\mathbb{A}^1$-homotopy theory of rigid analytic spaces}
\author{Christian Dahlhausen and Can Yaylali}
\begin{document}
\maketitle
\begin{abstract}
\let\thefootnote\relax\footnote{2020 Mathematics Subject Classification: 14F42, 14G22}

To any rigid analytic space (in the sense of Fujiwara--Kato) we assign an $\mathbb{A}^1$-invariant rigid analytic homotopy category with coefficients in any presentable category. We show some functorial properties of this assignment as a functor on the category of rigid analytic spaces. Moreover, we show that there exists a full six-functor formalism for the precomposition with the analytification functor by evoking Ayoub's thesis. As an application, we prove mod $p$ rigidity for rigid spaces over $\mathbb{Q}_p$. Moreover, we prove the equivalence of $\mathbb{A}^1$-invariant sheaves and $\mathbb{B}^1$-invariant sheaves in the mod $\ell$ case.
\end{abstract}

\thispagestyle{empty}
\setcounter{tocdepth}{1}
\tableofcontents
\setcounter{tocdepth}{2}

\section{Introduction}
The $\AA^1$-homotopy category of schemes was constructed and studied by Morel\hyp{}Voevodsky \cite{MV1}.
In modern language, their homotopy category $\textup{H}(S)$ is the category of $\AA^1$\hyp{}invariant Nisnevich sheaves on smooth schemes over a base scheme $S$ with values in spaces. Under this construction they show, under some regularity assumptions, that connective algebraic K-theory can be represented as $\ZZ\times \BGL\in \textup{H}(S)$.
Moreover, after $\otimes$-inverting $\PP^1$, we obtain the stable homotopy category $\textup{SH}(S)$. In his thesis, Ayoub showed that the assignment $S\mapsto \textup{SH}(S)$ admits a full six-functor formalism \cite{AyoubThesis}. 
Ayoub deduces this from properties of the functor $\textup{SH}(-)$ such as $\AA^1$-invariance, $\PP^1$-stability, and the existence of the localisation sequence. In this article, we are going to extend these results to the analytic setting, using the rigid analytic line $\Arig$ as an interval.

\vspace{6pt}\noindent Motives on rigid analytic varieties have been constructed and studied by Ayoub\linebreak \cite{AyoubRig}. The idea is to follow Morel-Voevodsky's approach in constructing the stable homotopy category $\textup{RigSH}^{\Brig}(X)$ for a rigid analytic variety $X$. Ayoub defines the category of rigid motives by Nisnevich localisation, contracting the closed unit disc $\Brig$, and then $\otimes$-inverting the unit disc without the origin. In op.\!\! cit.\ it is shown that this definition yields a coefficient system in the sense that any morphism $f$ of rigid analytic varieties induces a colimit preserving functor $f^*$ on $\textup{RigSH}^{\Brig}(-)$ with the following properties.
\begin{enumerate}
    \item[(PF)] If $f$ is smooth, then $f$ admits a left-adjoint satisfying base change and projection formula.
    \item[(Loc)] Any closed immersion $i\colon Z\hookrightarrow X$ of rigid analytic varieties with open complement $j \colon U\hookrightarrow X$ induces a fibre sequence $j_\#j^* \to \id \to i_*i^*$ of endofunctors on $\textup{RigSH}^{\Brig}(X)$.
\end{enumerate}
 While the proof of (PF) is a formality, the proof of (Loc) needs more care and follows the idea of Voevodsky. These two properties and the definition immediately imply that on algebraic maps $X\rightarrow Y$ of rigid analytic varieties there exists a six-functor formalism \cite[Scholie 1.4.2]{AyoubThesis}. More recently, in joint work Ayoub--Gallauer--Vezzani generalise this result to arbitrary finite type maps of rigid analytic varieties \cite{AGV}.
\par
A drawback of this definition of $\textup{RigSH}^{\Brig}$ is that not all cohomology theories on rigid analytic varieties satisfy $\Brig$-homotopy invariance. For instance, continuous K-theory $\Kcont$ defined by Morrow \cite{Morrow}  and analytic K-theory defined by Kerz-Saito-Tamme \cite{kst-i} are not $\Brig$-invariant (Remark~\ref{Kan-not-B1-invariant--remark}). Hence, these theories cannot be represented in $\textup{RigSH}^{\Brig}$. On the other hand, letting $\Arig$ be the rigid affine line, analytic K-theory is $\Arig$-invariant \cite[Cor.~2.7]{kst-ii} and  -- assuming resolution of singularities -- continuous K-theory is $\Arig$-invariant for regular rigid analytic varieties \cite[Prop. 5.14]{kst-i}.
Doing motivic homotopy theory with $\Arig$ as the interval has already been studied 
by Sigloch \cite{sigloch-thesis}.
This suggests that one should define the category of motives on a rigid analytic variety $X$ as 
\[
    \RigSH(X)\coloneqq \Sh_{\Nis}^{\AA^1}(\SmRig_{X})[(\PP^{1}_{S},\infty)^{-1}],
\]
where $(\PP^{1}_{S},\infty)$ denotes the motive of $\PP^1_S$ pointed at $\infty$; this is equivalent to $\otimes$-inverting the Thom space of any vector bundle on $S$, see Section~\ref{sec.thom-motives-stabilisation}.

\vspace{6pt}\noindent 
In a subsequent paper with Yicheng Zhou, we establish a representability result for
the K-theory of rigid analytic spaces, namely analytic K-theory as defined and studied by Kerz--Saito--Tamme \cite{kst-i,kst-ii}. Since this K-theory takes values in pro-categories, it makes sense to consider motives with coefficients in an arbitrary symmetric monoidal stable presentable \infcat{} $\Vcal_{\textup{st}}$. Then we can apply the theory to the categories of pro-spectra and, since pro-categories do not behave that nicely, (light or $\kappa$-small) condensed spectra.

Let us fix a symmetric monoidal presentable category $\Vcal$ and its stabilisation $\Vcal_{\textup{st}}$. The motivating examples include the category $\Cond_{\kappa}(\Spc)$ of $\kappa$-small condensed spaces and the category $\Cond_{\kappa}(\Sp)$ of $\kappa$-small condensed spectra, for a fixed uncountable strong limit cardinal cardinal $\kappa$. The category $\RigSH(-,\Vcal)$ keeps the property (PF) above and perhaps more surprisingly also (Loc), even without stabilising. 

\begin{intro-theorem}[\protect{\ref{prop-smooth-BC-and-PF},~\ref{thm-gluing}}]
The assignment $X\mapsto \Sh^{\Arig}_{\Nis}(X,\Vcal),\ f\mapsto f^{*}$, from rigid analytic varieties to symmetric monoidal presentable $\Vcal$-linear categories satisfies (PF) and (Loc). 
\end{intro-theorem}

As an immediate consequence, we get a partial six-functor formalism on $\RigSH$ over a nonarchimedean field $K$.

\begin{intro-corollary}[\protect{\ref{thm-6ff}}]
   The assignment $X\mapsto \RigSH(X^{\an},\Vcal_{\textup{st}}),\ f\mapsto f^{*}$, from separated finite type $K$-schemes to symmetric monoidal stable presentable $\Vcal_{\textup{st}}$-linear categories admits a six-functor formalism satisfying base change, purity, and projection formula.
\end{intro-corollary}

An extension of the six-functor formalism to non-algebraic maps of rigid analytic varieties is not clear. In the classical rigid motivic theory of Ayoub, the rigid affine line and the closed unit disc are $\Brig$-homotopic \cite[Prop. 1.3.4]{AyoubRig}. This will not be the case, when working with $\Arig$-homotopy instead. This shows that we need a more careful treatment to obtain a full six-functor formalism for rigid spaces and cannot rely purely on the results of \cite{AGV}. Nevertheless, in future work we aim to establish a such full six-functor formalism.

\vspace{6pt}\noindent 
Let us pass to an application of our theory. Following the ideas of Bachmann and Ayoub--Gallauer--Vezzani, we show a rigidity statement. However, unlike \cite{AGV}, our theory enables us to work with $\ZZ/p$-coefficients. We will denote hypercomplete $\AA^1$-invariant \'etale motives with $\RigDA_{\et}^{\AA^1,\wedge}$ and the hypercomplete $\BB^1$-invariant \'etale motives with $\RigDA_{\et}^{\BB^1,\wedge}$. Let $K$ be a nonarchimedead field of residue characteristic $p\geq 0$.

\begin{intro-theorem}[Rigidity, \ref{Thm:Rigidity}]
\label{Thm:Rigidity}
    Let $\Lambda = \ZZ/N$ and assume that $\mathrm{char}(K) = 0$ or that $p\nmid N$. Then for any rigid space $X$ over $K$ the natural functor
	\[
		\Dcal(X_{\et},\Lambda) \To[\iota^*] \RigDA_{\et}^{\AA^1,\wedge}(X,\Lambda)
	\]
	is an equivalence.
\end{intro-theorem}

In particular, when $K=\QQ_p$ or $K=\CC_p$, for any $n\geq 1$ we have an equivalence
$$\Dcal(X_{\et},\ZZ/p^n) \To[\iota^*] \RigDA_{\et}^{\AA^1,\wedge,(,\eff)}(X,\ZZ/p^n),$$
which was previously not accessible. We obtain two immediate but important corollaries, a $p$-adic realisation and a comparison between $\BB^1$-motives and $\AA^1$-motives.

\begin{intro-corollary}[\ref{Thm:Real}]
     Let $\ell$ be any prime. Assume that $\mathrm{char}(K) = 0$ or that $p\neq\ell$.
      Then there exists a symmetric monoidal functor
	\[
		\hat{\rho}_\ell\colon \RigDA_{\et}^{\AA^1,\wedge}(X,\ZZ)\to \Dcal(X_{\et},\ZZ_{\ell})\coloneqq \lim_{n\geq 0}\Dcal(X_{\et},\ZZ/\ell^n).
	\]
\end{intro-corollary}

\begin{intro-corollary}[\ref{cor:comparison-with-B1-motives}]
    Assume that $p\nmid N$. Then the fully faithful embedding
    \[
		\RigDA^{\BB^1,\wedge(,\eff)}_{\et}(X,\Lambda) \subseteq \RigDA^{\AA^1,\wedge(,\eff)}_{\et}(X,\Lambda)
	\]
    is an equivalence.
\end{intro-corollary}

\vspace{6pt}\noindent\textbf{Acknowledgements.}
We heartily thank Joseph Ayoub for guiding us through his work on six-functor formalisms \cite{AyoubThesis} and rigid analytic motives \cite{AyoubRig} and Martin Gallauer for insightful explanations about his work \cite{AGV} and Brad Drew's work on coefficient systems \cite{DrewMHM}. 
We thank Marc Hoyois for helpful comments on Theorem~\ref{thm-gluing} and Alberto Vezzani for feedback on an earlier draft about rigidity.
Furthermore, we thank Ryomei Iwasa and Florian Strunk for helpful discussions around this paper's content.

Both authors were supported by the Deutsche Forschungsgemeinschaft (DFG)\linebreak through the Collaborative Research Centre TRR 326 \textit{Geometry and Arithmetic of Uniformized Structures}, project number 444845124. The second named author was furthermore supported through the Walter Benjamin Programme (DFG, project number 524431573) during his stay at the Laboratoire de Mathématiques d'Orsay. He wants to thank Vincent Pilloni for hosting him and all the other people giving a friendly and mathematically interesting environment.
Moreover, we thank Johannes Sprang for fostering this collaboration by hosting CD in Essen in January 2026.

\vspace{6pt}\noindent\textbf{Assumptions and notations.}
Throughout, we fix some inaccessible regular cardinal $\kgbar$. By \textit{small}, we will mean \textit{$\kgbar$-small}. 
A \emph{category} always means an $\infty$-category, i.e. an $(\infty,1)$-category in the sense of \cite{HTT}, and we identify 1-categories with their image under the nerve functor.
Let us fix some notation for the categories appearing in this article.
\begin{enumerate}
    \item[$\bullet$] $\Spc$, the category of small spaces, or equivalently small $\infty$-groupoids or small $\infty$-sets, or small anima (depending on your taste).
    \item[$\bullet$] $\Sp$, the category of spectra.
    \item[$\bullet$] $\PrL$, the category of presentable categories with colimit preserving functors.
    \item[$\bullet$] $\PrLO$, the category $\PrL$ endowed with the Lurie tensor product \cite[\S 4.8.1]{HA}.
\end{enumerate}
The term \emph{essentially unique} is short hand for the expression \emph{unique up to contractible choice}.

\section{Background in rigid analytic geometry}

We place ourselves in the same setting as Ayoub--Gallauer--Vezzani \cite[\S 1]{AGV} and use the notion of \emph{rigid spaces}\footnote{For the sake of brevity, we mostly use the terminology `rigid space' instead of the more accurate `rigid analytic space'.} as defined by Fujiwara-Kato \cite{fuji-kato} which goes back to Raynaud \cite{raynaud} and was also developed by Abbes \cite{egr}. 

\vspace{6pt}\noindent\textbf{Rigid spaces.} Let $\FSch$ be the category of adic formal schemes of finite ideal type in the sense of \cite[ch.~I, 1.1.14, 1.1.16]{fuji-kato}  and let $\FSch^\qcqs$ be its full subcategory of quasi-compact and quasi-separated formal schemes. The category $\Rig^\qcqs$ of quasi-compact and quasi-separated \emph{rigid spaces} is defined to be the 1\hyp{}categorical localisation of $\FSch^\qcqs$ by the set of admissible formal blow-ups. The  category $\Rig$ is  defined by gluing quasi-compact and quasi-separated rigid spaces along open immersions, see \cite[ch.~II, \S 2.2~(c)]{fuji-kato}. For a formal scheme $\Xcal$ denote by $\Xcal^\rig$ its associated rigid space.

\vspace{6pt}\noindent\textbf{Relation to adic spaces.} There exists a fully faithful functor $\textup{Adic}^\textup{unif}\inj\Rig$ from the category of uniform adic spaces into the category of rigid spaces sending $\Spa(R,R^+)$ to $\Spf(R^+)^\rig$ which is compatible with gluing along open immersions \cite[\S 1.2]{AGV}.
We call a rigid space \emph{adic} if it lies in the essential image of this functor.

\vspace{6pt}\noindent\textbf{Morphisms locally of finite type.}
A morphism $f\colon Y\to X$ of rigid spaces is said to be \emph{locally of finite type} if there exists an open cover $(X_i)_i$ of $X$ such that the restricted morphisms $f^{-1}(X_i)\to X$ admit formal models which are locally of finite type, i.e.\ locally given by algebras of topologically finite type, see  \cite[ch.~II, sec.~2.3]{fuji-kato}.
Note that an algebra of topologically finite type over a uniform Huber ring, is again a uniform Huber ring. It follows that, if $X$ is adic, then $Y$ is adic as well.

\vspace{6pt}\noindent\textbf{Smooth morphisms.}
A morphism $f\colon Y\to X$ of rigid spaces is called \emph{smooth} if, locally on $Y$ and $X$, there exists a formal formal model $\Ycal\to\Xcal$ which, locally on rings, is given as a composition $A\to A\langle t_1,\ldots,t_n\rangle \to[\phi] B$ where $\phi$ is rig-\'{e}tale, see \cite[1.3.3., 1.3.13]{AGV}.
It holds true that a morphism of adic rigid spaces is smooth in this sense if and only if it smooth in the sense of Huber, see \cite[1.3.14]{AGV} and the references there.

\vspace{6pt}\noindent\textbf{The Nisnevich topology.}
A family $(\Ycal_i\to\Xcal)_i$ in $\FSch$ is a \emph{Nisnevich cover} if the induced family $(\Ycal_{i,\sigma}\to \Xcal_\sigma)_i$ on special fibres is a Nisnevich cover of schemes. A family $(Y_i\to X)_i$ in $\Rig$ is a \emph{Nisnevich cover} if locally on $X$ the family admits a refinement that has a formal model which is a Nisnevich cover of formal schemes.

\vspace{6pt}\noindent\textbf{The rigid affine line.}
Let us quickly recall some background concerning the rigid affine line, for more details consider the lecture notes of Scholze--Weinstein \cite[Lecture~4]{weinstein-scholze} or Hübner \cite[Ex.~1.4.3, Ex.~1.10.4]{huebner-adic}. 
Let $(R,R^+)$ be a uniform Huber pair and set $S=\Spa(R,R^+)$.
Throughout this article, we work with the rigid affine line, seen as a rigid space\footnote{We will mostly work in the setting of rigid spaces and therefore omit a superscript indicating that $\Arig$ is not the schematic affine line.}
\[
    \Arig_{S} \coloneqq \Spa(\ZZ[T],\ZZ) \times S
\]
using the fully faithful embedding $\textup{Adic}^\textup{unif} \inj \Rig$ from above.\footnote{The ring $\ZZ[T]$ is equipped with the discrete topology. For the existence of fibre products we refer to \cite[ch.~II, sec.~2.4]{fuji-kato}.} This description of the affine line will make it easy to construct explicit homotopies. All of the computations (e.g.\ descriptions of homotopies) can be reduced to computations on $\Spa(\ZZ[T],\ZZ)$. 
\par
Analogously, we define the closed rigid unit disc as 
\[
    \Brig_{S} \coloneqq \Spa(\ZZ[T],\ZZ[T]) \times S = \Spa(R\langle T\rangle,R^+\langle T\rangle)
\]
where $R\langle T\rangle$ denotes the ring of those power series in $T$ whose coefficients tend to $0$. 

Now assume that $R$ is a Tate ring and let $\pi$ be a pseudo-uniformiser. Then we can describe the affine line as the union of closed unit discs with larger and larger radius
\begin{equation}
    \label{eq-Arig-colim}
    \Arig_{S} = \bigcup_{n\geq 0} \Spa(R\langle \pi^n T\rangle,R^+\langle \pi^n T\rangle).
\end{equation}

\section{Unstable  \texorpdfstring{$\Arig$}~-homotopy theory in rigid geometry}

\label{sec.unstable}
\vspace{6pt}
\noindent In this section, we want to analyse the category of $\Arig$-invariant Nisnevich sheaves on smooth rigid spaces, also called the unstable rigid homotopy category. The upshot is that this category will define a pullback formalism in the sense that, for any smooth morphism $f$ of rigid spaces, the pullback $f^*$ admits a left-adjoint satisfying base change and projection formula. But we will go a bit further and prove that we even have a localisation sequence in this more general setting.

\vspace{6pt}\noindent\textbf{Notation.} Throughout this section, let $(R,R^+)$ be a uniform Huber pair with associated rigid space $S=\Spa(R,R^+)$ and let $\Dcal$ be a presentable category of coefficients.

\subsection{Homotopy invariant presheaves on rigid spaces}

As mentioned earlier, there are several reasonable choices of an interval object to do homotopy theory with rigid spaces such as the closed unit disc $\Brig$ and the rigid affine line $\Arig$. 

\begin{defi} \label{Aan-B1-invariance--defin}
We say that a presheaf $F\in\Fun(\Rig_S\op,\Dcal)$ is \ldots
\begin{enumerate}
	\item $\Brig$\emph{-invariant} if for every $X\in\Rig_S$ the canonical map $F(X) \to F(X\times\Brig)$ is an equivalence in $\Dcal$.
	\item $\Arig$\emph{-invariant} if for every $X\in\Rig_S$ the canonical map $F(X)\to F(X\times\Arig)$ is an equivalence in $\Dcal$.
\end{enumerate}
\end{defi}

\begin{defi}
Let $\Ccal$ be a site. A presheaf $F\in\Fun(\Ccal\op,\Dcal)$ is said to be a \emph{sheaf} if for every object $X$ of $\Ccal$ and every covering sieve $U\inj X$ the canonical map $F(X)\to F(U)$ is an equivalence in $\Dcal$; here $F(U)$ is defined via the equivalence
	\[
	\Fun^\mathrm{lim}(\PSh(\Ccal)\op,\Dcal) \To \Fun(\Ccal\op,\Dcal).
	\]
Thus if we write $U \simeq \colim_iU_i$ as a colimit of representables, then $F(U) \simeq \lim_iF(U_i)$.
\end{defi}

\begin{lemma} \label{B1-implies-Aan-invariance--lem}
Every $\Brig$-invariant sheaf on $\Rig_S$ is $\Arig$-invariant.
\end{lemma}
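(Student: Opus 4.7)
The plan is to realise $\Arig_S$ as a nested open cover by closed unit discs and reduce $\Arig$-invariance to $\Brig$-invariance one stage at a time. Using equation~(\ref{eq-Arig-colim}) in the Tate case,
\[ \Arig_S \;=\; \bigcup_{n\geq 0} \Brig_{n,S}, \qquad \Brig_{n,S} \coloneqq \Spa(R\langle \pi^{n}T\rangle,\,R^{+}\langle \pi^{n}T\rangle), \]
and the substitution $T\mapsto \pi^{n}T$ yields an isomorphism $\Brig_{n,S}\simeq\Brig_S$ of rigid spaces over $S$. Base changing along an arbitrary $X\in\Rig_S$ gives a nested open cover $\{X\times\Brig_{n,S}\}_{n\geq 0}$ of $X\times\Arig_S$.

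Next, I would convert this cover into a limit via the sheaf property. Because the cover is nested, every finite intersection $\Brig_{n,S}\cap \Brig_{m,S}$ equals $\Brig_{\min(n,m),S}$, so the Čech nerve degenerates and the sheaf axiom reduces to
\[ F(X\times\Arig_S) \;\simeq\; \lim_{n} F(X\times \Brig_{n,S}). \]
Now $\Brig$-invariance enters: for each $n$ the projection $X\times \Brig_{n,S}\to X$ induces an equivalence $F(X)\xrightarrow{\sim}F(X\times\Brig_{n,S})$, and the transition open immersions $X\times\Brig_{n,S}\hookrightarrow X\times\Brig_{n+1,S}$ commute with these projections to $X$. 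Thus the tower is coherently equivalent to the constant tower on $F(X)$, its limit is $F(X)$, and the resulting equivalence $F(X)\simeq F(X\times\Arig_S)$ is induced by the projection, as wanted.

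The main obstacle is the sheaf-theoretic step: justifying that $F$ evaluated on the non-qcqs rigid space $X\times\Arig_S$ genuinely equals the tower limit $\lim_{n}F(X\times\Brig_{n,S})$. Since non-qcqs rigid spaces in the Fujiwara--Kato formalism are built by gluing qcqs pieces along open immersions, this should be essentially built into the extension of the sheaf formalism from $\Rig^{\qcqs}$ to $\Rig$, but making it precise may require an explicit descent check for the filtered Zariski (hence Nisnevich) cover $\{X\times\Brig_{n,S}\}_n$, or a hyperdescent argument for its degenerate Čech diagram. For a non-Tate uniform Huber pair the explicit filtration~(\ref{eq-Arig-colim}) is unavailable; one would use the analogous filtration of $\Arig_S = \Spa(\ZZ[T],\ZZ)\times S$ by qcqs rational subsets, with the same abstract argument applying.
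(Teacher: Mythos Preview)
Your proposal is correct and takes essentially the same approach as the paper: both arguments realise $\Arig$ as the filtered colimit $\colim_{t\mapsto\pi t}\Brig$ of closed discs, use the sheaf property to turn this into a limit $F(X\times\Arig)\simeq\lim F(X\times\Brig)$, and then conclude from $\Brig$-invariance that the tower is constant. The paper compresses your \v{C}ech-nerve discussion into the single remark that ``$F$ is a sheaf (and hence preserves this colimit)'', and, like you, tacitly uses a pseudo-uniformiser $\pi$ without addressing the non-Tate case.
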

\begin{proof}
We can identify $\Arig$ with the colimit $\colim_{t\mapsto\pi t}\Brig$ in the category $\Rig_S$ where $t$ is a parameter for $\Brig=\Spa(k\skp{t}),k^\circ\skp{t})$. Since $F$ is a sheaf (and hence preserves this colimit) and since a colimit of open immersions commutes with fibre products, we have for every sheaf $F$ equivalences
	\begin{align*}
	F(X\times\Arig) \simeq F(X\times\colim_{t\mapsto\pi t}\Brig)
	\simeq F(\colim_{t\mapsto\pi t}(X\times\Brig))
	\simeq \lim_{t\mapsto\pi t} F(X\times\Brig).
	\end{align*}
Hence the map $F(X) \to F(X\times\Arig)$ is an equivalence if the map $F(X) \to F(X\times\Brig)$ is an equivalence.
\end{proof}

\begin{defi}
Denote by $\PSh^{\Arig}(\Rig_k,\Dcal)$ the full subcategory of the category of\linebreak presheaves $\PSh(\Rig_k,\Dcal)$ spanned by $\Arig$-invariant presheaves.
\end{defi}

\begin{remark} \label{Kan-not-B1-invariant--remark}
There are $\Arig$-invariant sheaves which are not $\Brig$-invariant. For instance, analytic K-theory $\Kcont \colon \Rig_k\op \to \Pro(\Sp^+)$ is a sheaf \cite[6.15]{kst-i} \cite[4.3]{kst-ii} which is $\Arig$-invariant \cite[4.4]{kst-ii}. However, assuming resolutions of singularities, it is not $\Brig$-invariant as it agrees with continuous K-theory \cite[6.19]{kst-i} which on $\pi_0$ is equivalent to $\K_0$ \cite[5.10]{kst-i} which has the non-$\Brig$-invariant Picard group as a direct summand \cite[Example~2]{kst-bass-quillen}.
\end{remark}

\begin{lemma} \label{homotopy-invariantification--lem}
For every cocomplete category $\Dcal$ there exists a localisation functor
	\[
	\L_{\Arig} \colon \PSh(\Rig_S,\Dcal) \To \PSh^{\Arig}(\Rig_S,\Dcal)
	\]
which is left-adjoint to the inclusion. It has the following explicit description: for every $F\in\PSh(\Rig_k,\Ccal)$ and every $X\in\Rig_S$ there is a functorial equivalence
	\[
	(\L_{\Arig} F)(X) \simeq \colim_{\Delta\op} F(X\times \Delta^{\mathrm{an},\bullet})
	\]
where $\Delta^{\mathrm{an},\bullet}$ is the analytification of its algebraic analogue.
\end{lemma}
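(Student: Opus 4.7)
The strategy is to define $\L_\Arig$ directly by the stated formula and verify the universal property of a reflective localisation by hand; this avoids any accessibility argument, so that cocompleteness of $\Dcal$ already suffices. Set
\[
    (\L_\Arig F)(X) \coloneqq \colim_{\Delta\op} F(X\times\Delta^{\mathrm{an},\bullet}),
\]
where $\Delta^{\mathrm{an},\bullet}$ is the analytification of the standard cosimplicial algebraic simplex $\Delta^{n}=\Spec(\ZZ[t_0,\ldots,t_n]/(t_0+\cdots+t_n-1))$. Up to a linear change of coordinates one has $\Delta^{\mathrm{an},n}\cong(\Arig)^n$ as a rigid space over $S$. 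Naturality in $X$ makes $\L_\Arig F$ a presheaf on $\Rig_S$, and the coface $[0]\to[\bullet]$ yields a canonical natural map $\eta_F\colon F\to\L_\Arig F$.

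By standard nonsense it suffices to check two things: (a) $\L_\Arig F$ is $\Arig$-invariant for every presheaf $F$, and (b) $\eta_G$ is an equivalence whenever $G$ is already $\Arig$-invariant. Part (b) is the easy half: for $\Arig$-invariant $G$, iterated $\Arig$-invariance combined with the identification $\Delta^{\mathrm{an},n}\cong(\Arig)^n$ shows that the cosimplicial object $n\mapsto G(X\times\Delta^{\mathrm{an},n})$ is essentially constant with value $G(X)$, so that its $\Delta\op$-colimit recovers $G(X)$.

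Part (a) is the main content, and also the main obstacle. What is needed is a cosimplicial homotopy between the projection $\Arig\times\Delta^{\mathrm{an},\bullet}\to\Delta^{\mathrm{an},\bullet}$ and its section $\{0\}\colon\Delta^{\mathrm{an},\bullet}\to\Arig\times\Delta^{\mathrm{an},\bullet}$, which after applying $\colim_{\Delta\op}F(X\times-)$ will produce the desired equivalence $(\L_\Arig F)(X\times\Arig)\simeq(\L_\Arig F)(X)$. Following Morel-Voevodsky's classical argument, this homotopy is obtained from the prismatic (Eilenberg-Zilber, shuffle) decomposition of $\Arig\times\Delta^{\mathrm{an},n}$ as a union of $n+1$ copies of $\Delta^{\mathrm{an},n+1}$ glued along common faces; the decomposition is given by explicit algebraic formulas which, being compatible with analytification, transfer verbatim to the rigid analytic setting. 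The main technical point to double-check is that the fibre products appearing in $\Arig\times\Delta^{\mathrm{an},n}$ in $\Rig_S$ agree with the analytifications of the corresponding algebraic fibre products, which reduces to the description $\Arig_S=\Spa(\ZZ[T],\ZZ)\times S$ recalled in the background section and the fact that $\Rig_S$ admits fibre products of this shape.
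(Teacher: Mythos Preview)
Your plan matches the paper's proof: both define $\L_{\Arig}$ by the formula, reduce to the two checks (your (a)/(b) are the paper's (i)/(ii)), handle (b) by observing the cosimplicial diagram is constant when $F$ is $\Arig$-invariant, and handle (a) by exhibiting an explicit simplicial homotopy between $\id$ and $p^*\sigma^*$ on $X\times\Arig\times\Delta^{\mathrm{an},\bullet}$.

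One correction to your description of (a): there is no prismatic decomposition of $\Arig\times\Delta^{\mathrm{an},n}$ into $n+1$ copies of $\Delta^{\mathrm{an},n+1}$ --- the affine line is not a bounded $1$-simplex, and in fact $\Arig\times\Delta^{\mathrm{an},n}\cong\Delta^{\mathrm{an},n+1}$ is a \emph{single} copy. The simplicial homotopy does not come from a geometric decomposition but from the \emph{interval-object structure} on $\Arig$, namely the multiplication $\mu\colon\Arig\times\Arig\to\Arig$ with $\mu(0,t)=0$ and $\mu(1,t)=t$. Concretely (and this is what the paper writes down), the homotopy is indexed by maps $f\colon[n]\to[1]$ and given on $X\times\Arig\times\Delta^{\mathrm{an},n}$ by
\[
h_n(f)(t)=\Bigl(\textstyle\sum_{j\in f^{-1}(1)}t_j\Bigr)\cdot t,
\]
so that $i_0^*h(t)=0$ and $i_1^*h(t)=t$. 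The ``prismatic'' combinatorics you have in mind is only the indexing set $\Delta_{/[1]}$ for the simplicial homotopy, not a geometric decomposition of the source. Once you replace your description by this formula, your sketch is exactly the paper's proof.
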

\begin{proof}
For the proof, denote by $\textup{H} F$ the presheaf whose values are the right-hand side of the desired identification. This yields a natural transformation $\alpha \colon \id \to \textup{H}$  of presheaves with values in $\Dcal$. Assume that the following conditions hold:
\begin{enumerate}
	\item[(i)] For every presheaf $F$, the presheaf $\textup{H} F$ is $\Arig$-invariant.
	\item[(ii)] For every $\Arig$-invariant presheaf $F$, the map $\alpha_F \colon F \to \textup{H} F$ is an equivalence.
\end{enumerate} 
These conditions imply for every presheaf $F$ that both maps $\textup{H}(\alpha_F),\alpha_{\textup{H} F} \colon \textup{H} F \to \textup{H}\textup{H} F$ are equivalences. Then the desired claim follows from \cite[5.2.7.4]{HTT}. Hence it suffices to check conditions (i) and (ii).

(i) Let $X\in\Rig_k$ and let $p\colon X\times\Arig \to X$ the projection and $\sigma \colon X \to X\times\Arig$ the zero section. As $p\circ\sigma=\id$, it remains to show that the induced map $p^*\circ\sigma^*$ on $(\textup{H} F)(X\times\Arig)$ is equivalent to the identity. For this purpose, we claim that the maps
	\[
	p^*\circ\sigma^*,\id \colon X\times\Arig\times\Delta^{\mathrm{an},\bullet} \rightrightarrows X\times\Arig\times\Delta^{\mathrm{an},\bullet}
	\]
are simplicial homotopic (see Remark~\ref{simplicial-homotopic--remark} below). In degree $n$, a homotopy $h_n$ on $X\times\Arig\times\Delta^{\mathrm{an},n}$ is given by $h_n(f)(t) = (\sum_{j\in f^{-1}(1)}t_j)\cdot t$ for a map $f \colon [n] \to{} [1]$. We have $i_0^*h(t)=0$ and $i_1^*h(t)=t$.

(ii) We follow the classical reasoning \cite[ch.~IV, Lemma~11.5.1]{k-book}. Let $F$ be an $\Arig$-invariant presheaf. Then for every $X\in\Rig$ there is a morphism $X\times\Delta^{\an,\bullet} \to X$ of simplicial rigid spaces which is induced by the projections. For every $n\in\Delta\op$ the induced map $F(X) \to F(X\times\Delta^{\an,n})$ is an equivalence. Hence the map
\[ F(X) = \colim_{\Delta\op} F(X) \To \colim_{\Delta\op} F(X\times\Delta^{\an,n}) = \textup{H}F(X) \]
is an equivalence.

\end{proof}

\begin{remark} \label{simplicial-homotopic--remark}
Let $\Ccal$ be a \infcat{} and $\mathrm{s}\Ccal$ the simplicial objects in $\Ccal$. We have a diagram
	\[
	\Delta \overset{i_0}{\underset{i_1}{\rightrightarrows}} \Delta_{/[1]} \to[\delta] \Delta
	\]
where $i_k([n]) = ([n]\to{}[0]\to[k][1])$ and $\delta([n]\to{}[1]) = [n]$. A \emph{simplicial homotopy} between two maps $f,g \colon X_\bullet \to Y_\bullet$ in $\mathrm{s}\Ccal$ is a map $h \colon \delta^*(X_\bullet) \to \delta^*(Y_\bullet)$ in $\Fun((\Delta_{/[1]})\op,\Ccal)$ such that $i_0^*(h)=f$ and $i_1^*(h)=g$. In this case, if $\Ccal$ is cocomplete, the induces maps $|f|,|g| \colon |X_\bullet| \to |Y_\bullet|$ on geometric realisations are homotopic in $\Ccal$. This follows since the map $\colim_{(\Delta_{/[1]})\op} \delta^*(X_\bullet) \to |X_\bullet| = \colim_{\Delta\op} X_\bullet$ is an equivalence. 
\end{remark}

\subsection{The unstable analytic motivic category}

\begin{defi} \label{RigH--definition}
Let $S$ be a rigid space. The \emph{unstable rigid motivic homotopy category with coefficients in $\Dcal$} is the reflective subcategory 
\[ \RigH(S,\Dcal) := \Sh^{\Arig}_\Nis(\SmRig_S,\Dcal) \]
of $\Sh_\Nis(\SmRig_S,\Dcal)$ which is spanned by $\Arig$-invariant Nisnevich sheaves. We have the following diagram of localisation functors:

$$ 
\begin{tikzcd}
    & \Sh_\Nis(\SmRig_S,\Dcal) \arrow[dr,"\L_{\Arig}^\#"]& \\
     \PSh(\SmRig_S,\Dcal) \arrow[ur,"\L_\Nis"] \arrow[dr,"\L_{\Arig}"] \arrow[rr,"\Lmot"] && \RigH(S,\Dcal) \\
     & \PSh^{\Arig}(\SmRig_S,\Dcal) \arrow[ur,"\L_\Nis^\#"]
\end{tikzcd} 
$$

We note that the \textit{motivic localisation} $\Lmot$ is given by the colimit of functors 
\[ \Lmot \simeq \colim( \L_\Nis\L_{\Arig} \to \L_\Nis\L_{\Arig}\L_\Nis\L_{\Arig} \to \ldots ). \]
\end{defi}

\subsection{Analytification for motivic spaces}
\label{sec:Analytification}

Let $K$ be a nonarchimedean field and let $S$ be a $K$-scheme of finite type. In this section, we construct an analytification functor from the algebraic stable homotopy category $\textup{H}(S,\Dcal)$ to the rigid analytic $\Arig$-homotopy category $\RigH(S^\an,\Ecal)$ depending on a \emph{change of coefficients functor} $\Dcal\to\Ecal$ between symmetric monoidal presentable \infcats. Later on, we apply this with the functors $\Spc\inj\Cond(\Spc)$ and $\Sp\inj\Cond(\Sp)$.

\vspace{6pt}\noindent\textbf{Notation.}
In this section, let $R_0$ be an adic ring with ideal of definition $I$. We set $S:=\Spec(R_0)\setminus\textup{V}(I)$ and $S^\an:=\Spf(R_0)^\rig$.
In the special case where $R_0$ is a ring of definition of a Huber pair $(R,R^+)$, we  have $S=\Spec(R)$ and $S^\an=\Spa(R)=\Spa(R,R^+)$. For any $R$-algebra $A$ we set $A^+$ as the integral closure of (the image of) $R^+$ in $A$ and write $\Spa(A):=\Spa(A,A^+)$ for simplicity.

\begin{lemma} \label{analytification-functor--lem}
There exists an analytification functor $(-)^\an \colon \Sch_S^\lft \to \Rig_{S^\an}^\lft$ sending the algebraic affine line $\Arig_S$ to the rigid analytic affine line $\Arig_{S^\an}$. In particular, for every $X\in\Sch^\lft_S$ we get an analytification functor $(-)^\an \colon \Sch_X^\lft \to \Rig_{X^\an}^\lft$. By restriction, we get a functor $(-)^\an\colon\Sm_X\to\SmRig_{X^\an}$.
\end{lemma}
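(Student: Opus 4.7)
The plan is to construct $(-)^\an$ by first handling affines of finite type over $S$ via an explicit formula using the colimit description of $\Arig^n_{S^\an}$, then extend to all of $\Sch_S^\lft$ by gluing; functoriality and the matching $(\AA^1_S)^\an=\Arig_{S^\an}$ will be built into the construction.

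For an affine $X=\Spec(A)\in\Sch_S^\lft$ with a presentation $A=R[T_1,\ldots,T_n]/(f_1,\ldots,f_m)$, clear denominators so that $f_j\in R_0[T_1,\ldots,T_n]$. By equation~\eqref{eq-Arig-colim}, in the Tate case $\Arig^n_{S^\an}=\bigcup_{k\geq 0}D_k$ with $D_k=\Spa(R\langle\pi^k T\rangle,R^+\langle\pi^k T\rangle)$, each carrying the formal model $\Spf(R^+\langle\pi^k T\rangle)$ which is topologically of finite type. Define $X^\an\subseteq\Arig^n_{S^\an}$ as the closed analytic subspace cut out by the $f_j$; on $D_k$ it is $\Spa(R\langle\pi^k T\rangle/(f_1,\ldots,f_m))$ with formal model $\Spf(R^+\langle\pi^k T\rangle/(f_1,\ldots,f_m))$, so $X^\an\in\Rig^\lft_{S^\an}$. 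A morphism $X\to Y$ of such affines lifts (after scaling) to polynomial maps with coefficients in $R_0$; these analytify to a morphism $\Arig^n_{S^\an}\to\Arig^{n'}_{S^\an}$ (each polynomial sends $D_k$ into some $D_{k'}$) which restricts to $X^\an\to Y^\an$. Independence of presentation is checked by the standard graph embedding argument.

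For a general $X\in\Sch_S^\lft$, choose an affine open cover $\{U_i\}$ with affine pairwise intersections (refine as needed). The open immersions $U_{ij}\hookrightarrow U_i$ analytify to open immersions of rigid spaces via the corresponding rational localisations on formal models, and gluing the $U_i^\an$ along these yields $X^\an\in\Rig^\lft_{S^\an}$ functorially. By construction $(\AA^1_S)^\an=\Arig_{S^\an}$, and the relative version $\Sch^\lft_X\to\Rig^\lft_{X^\an}$ is obtained simply by restriction. For the smoothness assertion, recall that any smooth algebraic morphism factors \'etale-locally as an \'etale map composed with the projection from an affine space; its analytification factors as a rig-\'etale map (the Jacobian criterion is preserved under $I$-adic completion of formal models) composed with the projection from $\Arig^n$, matching the definition of smoothness recalled from \cite[1.3.13]{AGV}.

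The main subtlety is the independence of presentation in the affine case and compatibility with the Fujiwara-Kato framework, where rigid spaces are obtained from formal schemes by localising at admissible blow-ups. A cleaner conceptual route is to observe that Huber's analytification already exists and is functorial on adic spaces; via the fully faithful embedding $\textup{Adic}^\textup{unif}\hookrightarrow\Rig$, our construction inherits both well-definedness and functoriality automatically. This also clarifies that the output lands in adic (hence, in particular, uniform) rigid spaces.
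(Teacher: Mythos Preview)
Your proposal is correct and follows essentially the same approach as the paper: both construct the analytification locally on affines by the explicit formula $R[T_1,\ldots,T_n]/(f_1,\ldots,f_m) \mapsto \bigcup_r \Spa(R\langle T\rangle_r/(f_1,\ldots,f_m))$ (the paper writes this with varying radii $r$, you phrase it as a closed subspace of $\bigcup_k D_k$, which amounts to the same thing), and then glue. The paper is content with a sketch and defers to Fujiwara--Kato \cite[ch.~II, \S 9.1]{fuji-kato} and H\"ubner \cite[\S 1.10]{huebner-adic} for details, and also mentions the alternative representable-presheaf route via \cite[\S 1.1.3]{AyoubRig}; you instead supply more of the detail yourself (independence of presentation, functoriality of morphisms, the smoothness argument via local factorisation through $\Arig^n$), and point to Huber's analytification via the embedding $\textup{Adic}^\textup{unif}\hookrightarrow\Rig$ as the clean conceptual route. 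One small caveat: your explicit description uses a pseudo-uniformiser $\pi$ and hence tacitly assumes $R$ is Tate, whereas the lemma is stated in the generality of an adic ring $R_0$; the paper's sketch makes the same simplification (``assuming that $S=\Spec(R)$ for a Huber pair''), so this is not a defect relative to the paper, but for the full statement you would want to invoke the cited references rather than rely solely on the Tate formula.
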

\begin{proof}
For general rigid spaces the analytification functor has been constructed by\linebreak Fujiwara--Kato \cite[ch.~2, sec.~9.1.]{fuji-kato}. We give a sketch of the construction in the adic setting; for a full proof we refer to Hübner's lecture notes \cite[\S 1.10]{huebner-adic}.  

So, assuming that $S=\Spec(R)$ for a Huber pair $(R,R^+)$, the analytification functor is locally given by the assignment
\[
	R[t_1,\ldots,t_n]/(f_1,\ldots,f_k) \mapsto \bigcup_{r\geq 1} \Spa\bigl( R\skp{t_1,\ldots,t_n}_r/(f_1,\ldots,f_k)\bigr)
\]
where $R\skp{t_1,\ldots,t_n}_r$ denotes the subring of $R\llbracket t_1,\ldots,t_n\rrbracket$ consisting of power series\linebreak $\sum_{i\in\NN^n}a_it^i$ such that $a_ir^i$ converges to zero as $|i|\to\infty$. This assignment is compatible with gluing and extends to the desired functor.

Alternatively, one can associate with any $X\in\Sch_S^\ft$ a natural presheaf $h_\an(X)$ on $\Rig_K$ which turns out to be representable by a rigid space $X^\an$ yielding the analytification functor, see  \cite[\S 1.1.3]{AyoubRig}.

\end{proof}

\begin{lemma} \label{analytification-presheaves--lem}
The analytification functor $(-)^\an \colon \Sch_S^\lft \To \Rig_{S^\an}^\lft$ extends to an essentially unique colimit-preserving functor
\[
\PSh(\Sch_S^{\lft}) \To \PSh(\Rig_{S^\an}^{\lft})
\]
compatible with the respective Yoneda embeddings. Furthermore, this functor is monoidal with respect to the cartesian monoidal structure. The analogous statement for the functor $(-)^\an\colon\Sm_S\to\SmRig_{S^\an}$ holds true, too.
\end{lemma}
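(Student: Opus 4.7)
The plan is a double application of the universal property of presheaf $\infty$-categories, first in its plain form and then enhanced to symmetric monoidal structures.

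For the existence and essential uniqueness of the extension, recall that for any small $\infty$-category $\Ccal$ and any cocomplete $\infty$-category $\Dcal$, restriction along the Yoneda embedding induces an equivalence $\Fun^{\L}(\PSh(\Ccal),\Dcal) \simeq \Fun(\Ccal,\Dcal)$ (\cite[5.1.5.6]{HTT}). Taking $\Ccal = \Sch_S^{\lft}$ and $\Dcal = \PSh(\Rig_{S^\an}^{\lft})$, the composite of $(-)^\an$ with the target Yoneda embedding has an essentially unique colimit-preserving extension $\PSh(\Sch_S^{\lft}) \to \PSh(\Rig_{S^\an}^{\lft})$, which is Yoneda-compatible by construction. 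The same reasoning applied to $\Sm_S \to \SmRig_{S^\an}$ handles the smooth case.

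For monoidality with respect to the cartesian structures, I would invoke the fact that whenever a small $\infty$-category $\Ccal$ admits finite products, the Day convolution of the cartesian symmetric monoidal structure on $\Ccal$ agrees with the cartesian symmetric monoidal structure on $\PSh(\Ccal)$ (see \cite[\S 4.8.1]{HA} and the references therein). By the universal property of Day convolution, a symmetric monoidal colimit-preserving extension exists uniquely from any finite-product-preserving functor between the underlying categories. It therefore suffices to check that $(-)^\an \colon \Sch_S^{\lft} \to \Rig_{S^\an}^{\lft}$ preserves finite products. This is clear for the terminal object, which is sent from $S$ to $S^\an$, and for fibre products it follows from either the explicit local description $A \mapsto \bigcup_{r} \Spa(R\skp{t_1,\ldots,t_n}_r/(f_1,\ldots,f_k))$ given in the proof of Lemma~\ref{analytification-functor--lem}, or from Ayoub's characterisation of $X^\an$ via the presheaf $h_\an(X)$, where compatibility with fibre products is built into the defining condition. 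The smooth analogue follows because $\Sm_S \subseteq \Sch_S^{\lft}$ and $\SmRig_{S^\an} \subseteq \Rig_{S^\an}^{\lft}$ are closed under fibre products (smoothness is stable under base change and composition), and $(-)^\an$ restricts to the smooth categories by Lemma~\ref{analytification-functor--lem}.

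The only non-formal input is the preservation of finite products by $(-)^\an$; everything else is a direct invocation of the universal properties of $\PSh$ and of Day convolution. I expect this product-preservation to be essentially immediate once one of the two constructions of $(-)^\an$ recalled in Lemma~\ref{analytification-functor--lem} is fixed, and it is the one point where a brief unpacking of the construction is required rather than a formal citation.
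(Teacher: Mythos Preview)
Your argument is correct and follows essentially the same route as the paper: both invoke \cite[5.1.5.6]{HTT} for the extension and reduce monoidality to the fact that $(-)^\an$ preserves finite products. The paper's monoidality step is a touch more direct (it simply notes that the left Kan extension of a product-preserving functor along Yoneda again preserves products), whereas you route through the identification of Day convolution with the cartesian structure; both arguments are standard and equivalent in this setting. One small terminological slip: $\Sm_S$ is not closed under arbitrary fibre products, only under finite products over $S$, but your parenthetical justification (stability under base change and composition) is exactly what establishes the latter, which is all you need.
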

\begin{proof}
Restricting along the Yoneda embedding $\Sch_S^\ft \inj \Fun(\Sch_S^{\ft,\opp},\Spc)$ induces for any \infcat{} $\Ccal$ which admits small colimits an equivalence \cite[Prop. 5.1.5.6]{HTT}
\[
\Fun^\mathrm{L}\bigl( \PSh(\Sch_S^{\ft}), \Ccal \bigr) \To[\simeq] \Fun(\Sch_S^\ft,\Ccal)
\]
where $\Fun^\mathrm{L}$ denotes the full subcategory of all functors that preserve small colimits. Hence the composition $\Sch_S^\lft \To[(-)\an] \Rig_{S^\an}^\lft \inj \PSh(\Rig_{S^\an}^\lft)$ extends to an essentially unique functor 
\[
    \PSh(\Sch_S^{\lft}) \To \PSh(\Rig_{S^\an}^{\lft})
\]
via left Kan extension. Note that the analytification functor preserves finite products and hence also the left Kan extension of this functor along the Yoneda embedding.
\end{proof}

\begin{lemma} \label{analytification-homotopy-invariant--lem}
The functor from Lemma~\ref{analytification-presheaves--lem} fits into a commutative square
\carremap{\Fun(\Sch_S^{\lft,\opp},\Spc)}{}{\L_{\Arig}}{\Fun(\Rig_{S^\an}^{\lft,\opp},\Spc)}{\L_{\Arig}}{\Fun^{\Arig}(\Sch_S^{\lft,\opp},\Spc)}{}{\Fun^{\Arig}(\Rig_{S^\an}^{\lft,\opp},\Spc)}
The analogous statement for the functor $(-)^\an\colon\Sm_S\to\SmRig_{S^\an}$ holds true, too.
\end{lemma}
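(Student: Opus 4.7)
The plan is to pass to right adjoints. The analytification functor $a^{*}$ of Lemma~\ref{analytification-presheaves--lem} is colimit preserving between presentable categories and therefore admits a right adjoint $a_{*}$, given explicitly by restriction along $(-)^{\an}$: that is, $(a_{*}F)(X) = F(X^{\an})$ for $X \in \Sch_{S}^{\lft}$.

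The key step is to check that $a_{*}$ sends $\Arig$-invariant presheaves to $\Arig$-invariant (i.e.\ $\AA^{1}$-invariant) presheaves. For an $\Arig$-invariant presheaf $F$ on $\Rig_{S^{\an}}^{\lft}$ and any $X \in \Sch_{S}^{\lft}$, we compute
\[
(a_{*}F)(X \times_{S} \AA^{1}) = F\bigl((X \times_{S}\AA^{1})^{\an}\bigr) \simeq F(X^{\an} \times_{S^{\an}} \Arig) \simeq F(X^{\an}) = (a_{*}F)(X),
\]
using monoidality of $(-)^{\an}$ (Lemma~\ref{analytification-presheaves--lem}) and the identification $(\AA^{1}_{S})^{\an} \simeq \Arig_{S^{\an}}$ (Lemma~\ref{analytification-functor--lem}) for the first equivalence, and $\Arig$-invariance of $F$ for the second.

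Consequently, $a_{*}$ restricts to a functor $\tilde{a}_{*} \colon \PSh^{\Arig}(\Rig_{S^{\an}}^{\lft}) \to \PSh^{\Arig}(\Sch_{S}^{\lft})$ that fits into a commutative square with $a_{*}$ and the two inclusions $\iota$. Each arrow in this square is a right adjoint between presentable categories ($\iota$ being right adjoint to $\L_{\Arig}$ and $a_{*}$ to $a^{*}$), so passing to left adjoints and invoking the uniqueness of adjoints yields the desired commutative square of left adjoints. The induced bottom horizontal functor is $\tilde{a}^{*} := \L_{\Arig} \circ a^{*} \circ \iota$.

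The only real content is the preservation claim in the key step, and this follows immediately from the two structural properties of analytification (monoidality and sending $\AA^{1}$ to $\Arig$) established in the preceding lemmas; everything else is a formal consequence of the adjunction $a^{*} \dashv a_{*}$ and the universal property of Bousfield localisation. An alternative direct approach would use the explicit formula from Lemma~\ref{homotopy-invariantification--lem} together with the fact that $a^{*}$ is colimit preserving and sends $X \times \Delta^{n}$ to $X^{\an} \times \Delta^{\an,n}$, but this amounts to the same computation in disguise.
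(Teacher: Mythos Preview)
Your proof is correct and rests on the same underlying fact as the paper's --- that analytification sends $X\times\AA^{1}$ to $X^{\an}\times\Arig$ --- but you phrase it dually: you show the right adjoint $a_{*}$ preserves $\Arig$-local objects and then pass to left adjoints, whereas the paper argues directly that the composite $\L_{\Arig}\circ a^{*}$ inverts the projections $X\times\AA^{1}\to X$ and hence factors through the localisation by its universal property. The two arguments are formally equivalent (a left adjoint inverting the generating morphisms is the same as its right adjoint preserving the local objects), so this is a presentational rather than a substantive difference; the paper's version is a one-liner, while yours makes the adjunction machinery explicit.
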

\begin{proof}
The functor $\Fun(\Sch_S^{\lft,\opp},\Spc) \To \Fun(\Rig_{S^{\an}}^{\lft,\opp},\Spc)$ sends a morphism $\mathrm{pr}_1\colon X\times\Arig_S\to X$ to the morphism $\mathrm{pr}_1\colon X^\an\times\Arig_{S^\an}\to X^\an$ which becomes an equivalence after applying $\L_{\Arig}$, hence we get the desired functor from the universal property of localisation. 
\end{proof}

\begin{lemma} \label{analytification-sheaves--lem}
The functor from Lemma~\ref{analytification-presheaves--lem} fits into a commutative square
\carremap{\Fun(\Sch_S^{\lft,\opp},\Spc)}{}{\L_\Nis}{\Fun(\Rig_{S^\an}^{\lft,\opp},\Spc)}{\L_\Nis}{\Fun^{\Nis}(\Sch_S^{\lft,\opp},\Spc)}{}{\Fun^{\Nis}(\Rig_{S^\an}^{\lft,\opp},\Spc)}
The analogous statement for the functor $(-)^\an\colon\Sm_S\to\SmRig_{S^\an}$ holds true, too.
\end{lemma}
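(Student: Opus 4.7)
My plan is to apply the universal property of Nisnevich sheafification on the algebraic side. To obtain the dashed bottom horizontal arrow making the square commute, it suffices by this universal property to verify that the composite
\[ \PSh(\Sch_S^{\lft,\opp},\Spc) \To[(-)^{\an,*}] \PSh(\Rig_{S^\an}^{\lft,\opp},\Spc) \To[\L_\Nis] \Sh_\Nis(\Rig_{S^\an}^\lft,\Spc) \]
inverts every Nisnevich-local equivalence in the source. Both functors preserve small colimits (the first by its construction in Lemma~\ref{analytification-presheaves--lem} and the second because it is a left adjoint), and the class of Nisnevich-local equivalences on $\Sch_S^\lft$ is generated under colimits by the Čech maps $\check{C}(U)\to X$ attached to Nisnevich covers $U = \bigsqcup_i U_i \surj X$. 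Moreover, the analytification on presheaves preserves finite products by the monoidality clause of Lemma~\ref{analytification-presheaves--lem} and therefore commutes with the formation of Čech nerves. Consequently the whole problem reduces to the following assertion: if $(U_i \to X)_i$ is a Nisnevich cover in $\Sch_S^\lft$, then $(U_i^\an \to X^\an)_i$ is a Nisnevich cover in $\Rig_{S^\an}^\lft$.

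To verify this preservation of covers I would work locally on $X$, reducing to an affine étale family $(U_i \to X)$ admitting a finite filtration of $X$ by closed subschemes over each stratum of which one of the $U_i$ has a section realising the trivial residue field extension. Analytification preserves étale (indeed smooth) morphisms by Lemma~\ref{analytification-functor--lem}, so the analytified family is étale on the rigid side. The Nisnevich condition in $\Rig$, as recalled in the background section, is checked by producing, locally on $X^\an$, a refinement with a formal model that becomes a Nisnevich cover after passage to the special fibre. Such a formal model is obtained from the standard affine formal models for $X^\an$ and $U_i^\an$ (constructed using rings of power series with bounded radius as in the proof of Lemma~\ref{analytification-functor--lem}), noting that analytification of an étale morphism is rig-étale and thus admits, after an admissible blow-up, a formal model that is étale on special fibres.

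The main obstacle is the last step: arranging compatible formal models whose special fibres inherit the Nisnevich splitting data from the algebraic cover. The point is that the filtration of $X$ witnessing the Nisnevich property of $(U_i\to X)$ descends to a filtration of the special fibre of a well-chosen formal model of $X^\an$, and the splittings analytify to corresponding sections on the nose because the relevant residue field extensions are trivial. Alternatively, the preservation may be deduced by comparison with the analytification functor of Ayoub-Gallauer-Vezzani \cite[\S 1.4]{AGV}, where an analogous compatibility between the algebraic and rigid Nisnevich topologies is established; I expect to use \emph{loc.~cit.}\ to streamline the verification and avoid redoing the formal-model bookkeeping from scratch.
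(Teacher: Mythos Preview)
Your approach is essentially the same as the paper's: reduce via the universal property of localisation to the claim that analytification sends Nisnevich covers to Nisnevich covers. The paper's proof is a single sentence asserting exactly this and nothing more; your first paragraph already matches it, while your second and third paragraphs attempt to justify the preservation of covers in detail (via formal models and rig-\'etale lifts), which the paper simply takes for granted.
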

\begin{proof}
The functor $\Fun(\Sch_K^{\lft,\opp},\Spc) \To \Fun(\Rig_{S^\an}^{\lft,\opp},\Spc)$ sends Nisnevich covers to Nisnevich covers so that we get the desired functor from the universal property of localisation. 
\end{proof}

\begin{lemma} \label{analytification-motivic-spaces--lem}
There exists an analytification functor $\textup{H}(S) \to \RigH(S^\an)$ fitting into a commutative square 
\carremap{\Sm_S}{(-)^\an}{}{\SmRig_{S^\an}}{}{\textup{H}(S)}{}{\RigH(S^\an).}
\end{lemma}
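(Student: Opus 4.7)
The plan is to invoke the universal property of motivic localisation and combine the two compatibility lemmas already established. Write $a \colon \PSh(\Sm_S,\Spc) \to \PSh(\SmRig_{S^\an},\Spc)$ for the colimit-preserving extension of analytification from Lemma~\ref{analytification-presheaves--lem}. We want to show that $a$ descends to a functor $\textup{H}(S) \to \RigH(S^\an)$ making the obvious square commute.

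First I would recall from Definition~\ref{RigH--definition} that $\Lmot$ is the colimit of the tower
\[ \L_\Nis\L_{\Arig} \To \L_\Nis\L_{\Arig}\L_\Nis\L_{\Arig} \To \cdots \]
with the analogous description on the algebraic side. By Lemma~\ref{analytification-homotopy-invariant--lem}, the functor $a$ sends $\Arig_S$-local equivalences on $\Sm_S$ to $\Arig_{S^\an}$-local equivalences on $\SmRig_{S^\an}$, i.e.\ there is a natural equivalence $\L_{\Arig}\circ a \simeq a\circ \L_{\Arig}$ (where by abuse of notation I use $a$ also for the induced functor on $\Arig$-invariant presheaves). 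By Lemma~\ref{analytification-sheaves--lem}, the same holds for $\L_\Nis$. Combining these, we obtain a natural equivalence $\L_\Nis\L_{\Arig}\circ a \simeq a\circ \L_\Nis\L_{\Arig}$, and iterating yields an equivalence of towers, hence on the sequential colimit a natural equivalence $\Lmot\circ a \simeq a\circ \Lmot$.

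Consequently, $a$ maps motivic equivalences to motivic equivalences. By the universal property of the localisation $\Lmot\colon \PSh(\Sm_S,\Spc) \to \textup{H}(S)$ \cite[5.2.7.4]{HTT}, there exists an essentially unique colimit-preserving functor $a^\mathrm{mot}\colon \textup{H}(S) \to \RigH(S^\an)$ together with a natural equivalence $a^\mathrm{mot}\circ\Lmot \simeq \Lmot\circ a$. Composing with the Yoneda embeddings and unravelling the definitions then gives the commutativity of the square in the statement; the bottom arrow is $a^\mathrm{mot}$.

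There is no real obstacle here: once the two compatibilities (Nisnevich and $\Arig$) are in place and the formula for $\Lmot$ is expressed as an iterated colimit of $\L_\Nis\L_{\Arig}$, everything follows formally from the universal property of Bousfield localisation. If one wanted to be more careful, the only mild point is to verify that the natural equivalences can indeed be chained into a map of diagrams (rather than just a levelwise equivalence), but this is automatic from the functoriality of the constructions in Lemmas~\ref{analytification-homotopy-invariant--lem} and \ref{analytification-sheaves--lem}.
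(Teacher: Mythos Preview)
Your proof is correct and follows the same approach as the paper: the paper's proof is the single line ``This follows from Lemmas~\ref{analytification-homotopy-invariant--lem} and~\ref{analytification-sheaves--lem}'', and you have simply unpacked how those two compatibilities combine. A minor simplification over your iteration argument: since $\textup{H}(S)$ is the localisation of $\PSh(\Sm_S)$ at the union of $\Arig$-equivalences and Nisnevich-local equivalences, it suffices that $a$ sends each of these classes to motivic equivalences on the analytic side---which is exactly what the two lemmas say---and then the universal property applies directly without needing to chain the tower $(\L_\Nis\L_{\Arig})^n$ coherently.
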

\begin{proof}
    This follows from Lemmas \ref{analytification-homotopy-invariant--lem} and \ref{analytification-sheaves--lem}.
\end{proof}

\begin{rem}[Change of coefficients] 
\label{analytification-coefficient-change--rem}
Let $\lambda\colon\Dcal\rightleftarrows\Ecal\colon\rho$ be an adjunction. For any category $\Ccal$, since we can compute equivalences in functor categories objectwise, we get an induced adjunction
\[ \lambda_* \,\colon\, \Fun(\Ccal\op,\Dcal) \rightleftarrows\Fun(\Ccal\op,\Ecal) \,\colon\, \rho_*.\]
If $\Ccal$ carries a topology, we get an induced adjunction
\[ \tilde{\lambda}_* \,\colon\, \Sh(\Ccal,\Dcal) \rightleftarrows\Sh(\Ccal,\Ecal) \,\colon\, \tilde{\rho}_*\]
with $\tilde{\lambda}_* \simeq \L_\Ecal\circ\lambda_*$ and $\tilde{\rho}_*\simeq\rho_*$ (omitting the inclusion functors from presheaves to sheaves), since we can patch adjunctions together. 

If $\Ccal\op\in\{\Sch_S^{\lft},\Sm_S,\Rig_{S^\an}^{\lft},\SmRig_{S^\an}\}$, then we get an induced adjunction
\[ \hat{\lambda}_* \,\colon\, \Sh^{\Arig}_{(\Nis)}(\Ccal,\Dcal) \rightleftarrows\Sh^{\Arig}_{(\Nis)}(\Ccal,\Ecal) \,\colon\, \hat{\rho}_*\]
on the full subcategories of $\Arig$-invariant (Nisnevich) sheaves.
Combining with Lemma~\ref{analytification-motivic-spaces--lem} we get adjunctions
\[
\textup{H}(S,\Spc) \rightleftarrows \RigH(S^\an,\Spc) \rightleftarrows \RigH(S^\an,\Pro^\omega(\Spc)).
\]
In the case where $\Dcal$ and $\Ecal$ are commutative algebra objects in $\PrL$ and $\lambda$ is symmetric monoidal, we can give an alternative description of the adjunction $\hat{\lambda}_*\rightleftarrows\hat{\rho}_*$, so that we obtain a monoidal adjunction.
We will make this precise in Section~\ref{sec.change.of.coeff.monoidal}. The main application of this alternative description will be that $\hat{\lambda}$ will be a symmetric monoidal functor.
\end{rem}

\subsection{Functoriality}

We have defined the motivic homotopy $\RigH(-,\Vcal)$ category with coefficients in any presentable category $\Vcal$ (Definition~\ref{RigH--definition}). However, it suffices to prove most results on functoriality for coefficients in $\Spc$.
This eases computations as we can easily reduce ourselves to the case of presheaves and localise accordingly. But it is not a drawback as we will see later in Lemma~\ref{lem-effective-coeff} that this implies that also $\RigH(-,\Vcal)$ will satisfy the same functorial properties as $\RigH$. This will include the localisation sequence proven in Theorem~\ref{thm-gluing}.

Let $f\colon S\to T$ be a morphism of rigid spaces. Then there is a base change\footnote{Here the fibre product denotes the fibre product of rigid space. On affinoids, this corresponds to the completed tensor product. We drop the completion from our notation.} functor 
\[ b \colon \SmRig_T\to\SmRig_S, X\mapsto X\times_TS\eqqcolon X_{S},\]
inducing by precomposition a functor $f_* := b^* \colon \PSh(\SmRig_S) \to \PSh(\SmRig_T)$ which restricts to a functor 
\[ f_* \colon \RigH(S) \To \RigH(T). \]
This functor preserves limits and is accessible\footnote{On presheaves, this is clear. Then the left-adjoint of $f_{*}$ on $\RigH$ is given by $\L_{\Arig}^{\#}\L_{\Nis}f^{*}$. By abuse of notation, we will again denote this left-adjoint with $f^{*}$.}, hence admits a left-adjoint functor
\[ f^* \colon \RigH(T) \To \RigH(S). \]
The functor $f^*$ is a left Kan extension and, for $F\in\RigH(T)$ and $X\in\SmRig_{S}$, one has an equivalence \cite[Prop. 4.3.3.7]{HTT}
\[ (f^*F)(X) \simeq \colim_{\SmRig_T\ni Y, Y_{S}\to X}F(Y). \]
In particular, $(f^*F)(X\to S) \simeq F(X\to S\to[f] T)$ whenever $f$ is smooth. In this case the functor $f^{*}$ is induced by the restriction along the postcomposition functor $\SmRig_{T}\to \SmRig_{S}$ and, as above, we see that $f$ admits a left-adjoint
\[ f_{\sharp}\colon \RigH(S)\to \RigH(T) \]
by formal reasons.

\begin{lemma}
\label{lem.Nis-localisation}
The category $\Sh_\Nis(\SmRig_S)$ is a topological localisation of $\PSh(\SmRig_S)$. In particular, the category $\Sh_\Nis(\SmRig_S)$ is an $\infty$-topos and the localisation functor\linebreak $\L_\Nis \colon \PSh(\SmRig_S) \to \Sh_\Nis(\SmRig_S)$ is accessible and left-exact.
\end{lemma}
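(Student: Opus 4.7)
The plan is to reduce the statement to Lurie's general theory of topological localisations of presheaf $\infty$-topoi. Recall that for any small \infcat{} $\Ccal$ equipped with a Grothendieck topology $\tau$, the full subcategory $\Sh_\tau(\Ccal)\subseteq\PSh(\Ccal)$ is obtained by inverting the monomorphisms $U\hookrightarrow j(X)$ corresponding to covering sieves; by \cite[\S 6.2.1 and 6.2.2.7]{HTT} this is a topological localisation, hence an \inftop{}, and the associated localisation functor is automatically accessible and left-exact. Thus the only thing to check is that the Nisnevich topology on $\SmRig_S$ really does define a Grothendieck topology on an essentially small \infcat{}.

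First I would verify essential smallness. The category $\SmRig_S$ is the 1-localisation of a category of formal schemes smooth over a formal model of $S$ by admissible formal blow-ups; by the Fujiwara-Kato setup, this produces a category of sufficiently small cardinality (bounded by our fixed $\kgbar$), so there is no set-theoretic obstruction to applying the presheaf-topos machinery.

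Next I would check the Grothendieck-topology axioms (stability under pullback, transitivity, and isomorphisms being covers) for the class of Nisnevich covers on $\SmRig_S$ as recalled in Section~2. By definition, a family $(Y_i\to X)_i$ is a Nisnevich cover if, locally on $X$, it admits a refinement possessing a formal model which is a Nisnevich cover of formal schemes; the latter is tested on special fibres. Since admissible formal blow-ups and fibre products are compatible with passage to special fibres, each axiom reduces to the corresponding axiom for the Nisnevich topology on schemes. This verification is routine and is carried out implicitly in \cite{AyoubRig,AGV}, whose site-theoretic setup can be cited directly.

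Once the Nisnevich topology has been recognised as a Grothendieck topology, the lemma follows immediately from \cite[6.2.2.7]{HTT}: the inclusion $\Sh_\Nis(\SmRig_S)\subseteq\PSh(\SmRig_S)$ is a topological localisation, hence $\Sh_\Nis(\SmRig_S)$ is an \inftop{} and $\L_\Nis$ is accessible and left-exact. I do not foresee a genuine obstacle: the content lies entirely in bookkeeping around the refinement clause of the Nisnevich-cover definition (one must be careful that a refinement of a refinement behaves well under pullback), but this bookkeeping is standard.
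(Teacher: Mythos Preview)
Your proposal is correct and follows the same approach as the paper: both reduce to the general fact that sheaves for a Grothendieck topology form a topological localisation, citing \cite[6.2.2.7]{HTT}. The paper's proof is a one-line citation to \cite[6.2.2.7, 6.2.1.7]{HTT}, whereas you spell out the routine verifications (essential smallness, Grothendieck-topology axioms) that the paper takes for granted.
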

\begin{proof}
Consider \cite[6.2.1.4]{HTT} for the definition of a topological localisation. Such localisations are always accessible and left-exact \cite[6.2.1.6]{HTT}. In general, the category of sheaves for Grothendieck topologies are topological localisations of the category of presheaves \cite[6.2.2.7]{HTT}.
\end{proof}

\begin{lem}
\label{lem.loc.cartesian}
	The motivic localisation $\Lmot$ preserves finite products. 
\end{lem}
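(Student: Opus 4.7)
The plan is to exploit the tower presentation $\Lmot \simeq \colim_n(\L_\Nis\L_\Arig)^n$ recalled in Definition~\ref{RigH--definition} and handle the two assertions separately. Throughout, I take ``locally cartesian'' to mean that the class of $\Lmot$-equivalences in $\PSh(\SmRig_S)$ is stable under base change along arbitrary morphisms, in line with the convention used in the algebraic motivic literature.

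For preservation of finite products, I would first note that $\L_\Nis$ is left-exact by Lemma~\ref{lem.Nis-localisation} and therefore preserves finite products. For $\L_\Arig$, the explicit formula of Lemma~\ref{homotopy-invariantification--lem} writes $(\L_\Arig F)(X)$ as the geometric realisation of the simplicial space $F(X\times\Delta^{\mathrm{an},\bullet})$. Since products of presheaves are computed objectwise and geometric realisations, being sifted colimits, commute with finite products in $\Spc$, the functor $\L_\Arig$ preserves finite products. Hence every iterate $(\L_\Nis\L_\Arig)^n$ does, and so does the filtered colimit $\Lmot$, because filtered colimits in $\Spc$ commute with finite limits.

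For the locally cartesian property, I would argue that each of the building-block localisations $\L_\Nis$ and $\L_\Arig$ produces a class of equivalences stable under base change in the presheaf category, and then combine these along the tower. Stability for $\L_\Nis$ is immediate from left-exactness: a pullback of an $\L_\Nis$-equivalence is carried to a pullback of an equivalence and hence remains one. For $\L_\Arig$, the strongly saturated class of equivalences is generated by the projections $X\times\Arig\to X$ with $X\in\SmRig_S$, and these are manifestly preserved by arbitrary base change in presheaves. The task is then to propagate this stability to the full saturated class; I would do this via the explicit formula, where a base-changed $\Arig$-equivalence $Z\times_Y X \to X$ is analysed by evaluating on $U\times\Delta^{\mathrm{an},\bullet}$ and observing that sifted colimits in $\Spc$ commute with finite products, so that the base-change stability is compatible with passage to the geometric realisation.

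The main obstacle is this $\L_\Arig$-step, since $\L_\Arig$ is not itself left-exact and one cannot simply apply it to a pullback square; the argument must rely on the sifted nature of the realisation. Once stability is established for $\L_\Nis$ and $\L_\Arig$ in the presheaf category, it propagates through the tower defining $\Lmot$ and then to the filtered colimit, because filtered colimits preserve base-change-stability of equivalences in $\Spc$. Combined with the preservation of finite products from the previous step, this yields the desired locally cartesian property.
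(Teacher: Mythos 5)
Your first half is fine and is, in substance, the argument behind the citations the paper uses: $\L_{\Nis}$ preserves finite products because it is left exact (Lemma~\ref{lem.Nis-localisation}); $\L_{\Arig}$ does because the formula of Lemma~\ref{homotopy-invariantification--lem} exhibits it as a $\Delta\op$-indexed, hence sifted, colimit computed objectwise, and sifted colimits in $\Spc$ commute with finite products; and the tower $\Lmot\simeq\colim_n(\L_{\Nis}\L_{\Arig})^n$ finishes the argument since filtered colimits commute with finite products. This is essentially how \cite[Prop.~3.4]{Hoy-EQ}, which the paper's proof simply invokes, treats the product statement.

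The ``locally cartesian'' half, however, contains a genuine gap, and the statement you set out to prove is false. With your reading --- all $\Lmot$- (and, in the key step, $\L_{\Arig}$-) equivalences in $\PSh(\SmRig_S)$ are stable under base change along arbitrary morphisms --- the claim fails: the zero section $s_0\colon S\to\Arig_S$ is an $\Arig$-equivalence (two-out-of-three against the projection $\Arig_S\to S$), but its pullback along the unit section $s_1\colon S\to\Arig_S$ is the map $\es\to S$ from the presheaf represented by the empty rigid space, which is $\Arig$-invariant and a Nisnevich sheaf, hence already local and not equivalent to $S$ (initial versus terminal, cf.\ Remark~\ref{rem.closed.no-colim}); the same example defeats the $\Lmot$-version. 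This is no accident: a strongly saturated class of small generation that is stable under all pullbacks defines a left-exact localisation \cite[\S 6.2.1]{HTT}, and $\L_{\Arig}$ is not left exact, as you yourself note. So pullback-stability of the generating projections does not propagate to the saturated class, and the step you flag as ``the main obstacle'' cannot be closed. The notion actually used in the paper, via \cite[Prop.~3.4]{Hoy-EQ} (in the sense of Gepner--Kock), is weaker: $\L$ preserves pullbacks whose base is $\L$-local; in particular only equivalences with local codomain need be stable under base change, which is all the gluing argument of Theorem~\ref{thm-gluing} requires. Your siftedness idea does prove that restricted statement: if $Z$ is $\Arig$-invariant, the cosimplicial evaluation $Z(-\times\Delta^{\an,\bullet})$ is constant, and universality of colimits together with cofinality of the diagonal $\Delta\op\to\Delta\op\times\Delta\op$ gives $\L_{\Arig}(X\times_Z Y)\simeq \L_{\Arig}X\times_Z\L_{\Arig}Y$; for a non-local base this reduction is unavailable, as the counterexample shows.
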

\begin{proof}
	For $\L_{\Nis}$ this follows from Lemma~\ref{lem.Nis-localisation} and for $\L_{\Arig}$ this follows from \cite[Prop. 3.4]{Hoy-EQ}. 
    
    For $\Lmot$ let us set $\L^0\coloneqq \id$ and $\L^n\coloneqq \L_{\Nis}\L_{\Arig}\L^{n-1}$ for $n\geq 1$. In general $\Lmot$ is equivalent to $\colim_{n\geq 0} \L^n$, here the colimit is in the $\infty$-category of endofunctors of $\PSh(\SmRig_S)$.
    Since colimits in functor categories are taken pointwise, we see that $\Lmot$ commutes with finite products if and only if each $\L^n$ commutes with finite products, and filtered colimits in $\infty$-topoi commute with finite products. Both assertions are clear by assumption and \cite[Ex. 7.3.4.7]{HTT}. In fact, the latter statement shows that filtered colimits in $\infty$-topoi are left exact and thus also commute with the formation of pullbacks.
\end{proof}

\begin{rem}[Monoidal structure on $\RigH$]
\label{rem-RigH-monoidal}
	Let $S$ be a rigid space. The category of spaces $\Spc$ together with its cartesian monoidal structure induces a symmetric monoidal structure on $\PSh(\SmRig_S)$ by the pointwise tensor product \cite[Rem. 2.1.3.4]{HA}. By Lemma~\ref{lem.loc.cartesian}, the localisation $\Lmot$ is compatible with this monoidal structure, inducing a  symmetric monoidal structure on $\RigH(S)$ \cite[Prop. 2.2.1.9]{HA}. Note that the tensor product on the presheaf level commutes with arbitrary colimits and thus also on $\RigH(S)$. 
	If $f\colon S\to T$ is a map of rigid spaces, then $f^{*}$ is a symmetric monoidal functor.\footnote{This is clear as $f^{*}$ on presheaves certainly preserves finite products and as $\Lmot$ commutes with finite products, we can reduce to this case. But this is clear, as $f^{*}$ can be written as a filtered colimit (the fact that this colimit is filtered follows from \cite[00X3]{stacks-project}).}

    Let us add that $\RigH(S)$ is obtained via localisation of an $\infty$-topos at a small set of morphisms. In particular, $\RigH(S)$ is presentable and even more it is a commutative algebra object in $\PrL$.
\end{rem}

\begin{prop} \label{prop-smooth-BC-and-PF}
	Let $f\colon S\rightarrow T$ be a smooth morphism of rigid spaces.
	\begin{enumerate}
		\item The pullback $f^{*}\colon \RigH(T)\rightarrow \RigH(S)$ admits a left-adjoint $f_{\sharp}$.
		\item (Smooth projection formula) The canonical map 
			\[ f_{\sharp}(f^{*}M\otimes N)\to M\otimes f_{\sharp}N \]
			is an equivalence for all $M\in\RigH(T)$ and $N\in\RigH(S)$.
		\item (Smooth base change) Let $g\colon X\rightarrow T$ be a morphism of rigid spaces and let 
		\[ \begin{tikzcd}
			X_{S}\arrow[r,"f'"]\arrow[d,"g'"]& X\arrow[d,"g"]\\
			S\arrow[r,"f"]&T
		\end{tikzcd} \]
		be a pullback diagram. Then the exchange map $f'_{\sharp}g'^{*}\to g^{*}f_{\sharp}$ is an equivalence.
	\end{enumerate}
\end{prop}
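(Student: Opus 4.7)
The key observation, essentially already recorded in the discussion preceding the statement, is that when $f\colon S\to T$ is smooth, the base change functor $b_f\colon\SmRig_T\to\SmRig_S,\ (X/T)\mapsto(X\times_TS/S)$ admits a \emph{left} adjoint $a_f\colon\SmRig_S\to\SmRig_T$ given by postcomposition with $f$, i.e.\ $(Y\to S)\mapsto(Y\to S\to T)$. Passing to presheaves we get the adjoint pairs
\[
a_{f,!}\dashv a_f^*\dashv b_f^*,
\]
where $a_f^*=f^*$ and $b_f^*=f_*$ at the presheaf level. The plan is to define $f_\sharp$ on presheaves as the left Kan extension $a_{f,!}$ (so on representables $f_\sharp(Y/S)=(Y/T)$), show it descends to $\RigH$, and verify (2) and (3) on representables.

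For (1), I would first argue that $f^*$ preserves $\Arig$-invariant Nisnevich sheaves. Since $a_f$ sends Nisnevich covers of $Y/S$ to Nisnevich covers of $Y/T$ (the notion is defined on special fibres, independently of the base) and commutes with products with $\Arig$, the restriction $a_f^*$ preserves sheafness and $\Arig$-invariance. Combined with the monoidal localisation data of Lemma~\ref{lem.loc.cartesian} and Remark~\ref{rem-RigH-monoidal}, the presheaf adjunction $f_\sharp\dashv f^*$ passes through the reflective localisation to give the claimed adjunction $f_\sharp\colon\RigH(S)\rightleftarrows\RigH(T)\colon f^*$; explicitly $f_\sharp=\Lmot\circ a_{f,!}$ applied to the inclusion.

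For (2) and (3), the plan is to reduce to representables, since $\RigH(-)$ is generated under colimits by (motivic localisations of) representable smooth rigid spaces, and every functor appearing in the two exchange maps preserves colimits in each variable: $f_\sharp$, $f'_\sharp$, $g^*$, $g'^*$ are left adjoints, and $-\otimes -$ commutes with colimits in each slot by Remark~\ref{rem-RigH-monoidal}. On representables the exchange maps become identifications of fibre products. Concretely, for base change with $Y\in\SmRig_S$ representable,
\[
f'_\sharp g'^*(Y) \;=\; (Y\times_S X_S)/X \;\simeq\; (Y\times_T X)/X \;=\; g^* f_\sharp(Y),
\]
using $X_S=S\times_T X$. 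For the projection formula with $M=X/T$, $N=Y/S$ representable,
\[
f_\sharp(f^*M\otimes N) \;=\; (X\times_T S\times_S Y)/T \;\simeq\; (X\times_T Y)/T \;=\; M\otimes f_\sharp N.
\]
In both cases the natural exchange map, built from the unit/counit, is the identity on representables, so the equivalence propagates by colimit-extension.

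The main obstacle is the descent step of (1), i.e.\ verifying that the presheaf-level adjunction $a_{f,!}\dashv a_f^*$ really passes to the motivic localisation. This requires knowing that $f^*$ preserves the full subcategory $\RigH\subset\PSh$, which amounts to the two stability statements above (Nisnevich sheafness and $\Arig$-invariance are preserved under $a_f^*$). Once this is in place, both (2) and (3) are essentially diagram chases on representables, and the only other thing worth pointing out is that the monoidality of $f^*$ used in (2) is built into the localisation data via Lemma~\ref{lem.loc.cartesian} together with the pointwise symmetric monoidal structure on $\PSh$.
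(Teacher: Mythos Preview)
Your proposal is correct and follows essentially the same approach as the paper: part~(1) is the existence of $f_\sharp$ as a left Kan extension of $f^*=a_f^*$ through the motivic localisation, and parts~(2) and~(3) are reduced to the presheaf level (equivalently, to representables via colimit extension) where they are fibre-product identities. The paper phrases the reduction for (2) and (3) slightly differently, invoking that $\L_{\Arig}\L_{\Nis}$ commutes with finite products (so the cartesian monoidal structure and the exchange maps pass through the localisation), but this amounts to the same verification you carry out explicitly on representables.
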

\begin{proof}
	The first assertion follows from the discussion in the beginning. For (2) and (3) we may use that $\L_{\Arig}\L_{\Nis}$ commutes with finite products and thus, we can check them on the level of presheaves, where they follow from the construction.
\end{proof}

\begin{rem}
    Let us note that Proposition~\ref{prop-smooth-BC-and-PF} (3) is equivalent to the right-adjointability of the transposed diagram, i.e. $f'_{\sharp}g'^{*}\to g'^{*}f'_{\sharp}$ is an equivalence if and only if the exchange map $f^{*}g_{*}\to g'_{*} f^{'*}$ is an equivalence.
\end{rem}

\begin{prop} \label{prop-nisnevich-descent-RigH}
The following assignments define sheaves for the Nisnevich topology on $\Rig$ with values in $\Pr^{L}$.
\begin{enumerate}
	\item The assignment $S\mapsto\Sh_\Nis(\SmRig_S), f\mapsto f^*$.
	\item The assignment $S\mapsto\RigH(S), f\mapsto f^*$.
\end{enumerate}
\end{prop}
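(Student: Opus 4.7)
The plan is to reduce statement (2) to statement (1), and then prove (1) via the topos-theoretic slice-descent principle applied to the $\infty$-topos $\Sh_\Nis(\SmRig_S)$.

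For the reduction of (2) to (1), I observe that any Nisnevich cover $\{u_i\colon U_i\to S\}$ in $\Rig$ consists of \'{e}tale, hence smooth, morphisms. For a smooth morphism $u$, the functor $u^*$ is the restriction along the postcomposition functor $\SmRig_U\to\SmRig_S$, as recalled above Proposition~\ref{prop-smooth-BC-and-PF}. Since $\Arig_U\simeq\Arig_S\times_S U$ and smooth pullbacks of Nisnevich covers remain Nisnevich covers, this $u^*$ preserves $\Arig$-invariance and commutes with $\Lmot$. Consequently, the reflective subcategories $\RigH(U_\bullet)\subset\Sh_\Nis(\SmRig_{U_\bullet})$ assemble into a reflective subcategory of the \v{C}ech-limit diagram for (1), and the descent equivalence of (1) restricts to a descent equivalence for $\RigH$.

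For (1), the category $\mathcal{X}:=\Sh_\Nis(\SmRig_S)$ is an $\infty$-topos by Lemma~\ref{lem.Nis-localisation}, and the coproduct $\coprod_i U_i\to S$ is an effective epimorphism in $\mathcal{X}$ since the family generates a covering sieve. Applying the topos-theoretic slice-descent principle \cite[Prop.~6.3.5.8]{HTT} yields an equivalence in $\PrL$
\[ \mathcal{X}\To \lim_{[n]\in\Delta}\prod_{i_0,\ldots,i_n}\mathcal{X}_{/U_{i_0}\times_S\cdots\times_S U_{i_n}}. \]
It then remains to identify each slice $\mathcal{X}_{/U}$, for $u\colon U\to S$ smooth, with $\Sh_\Nis(\SmRig_U)$ so that the projection matches $u^*$. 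The slice $1$-category $(\SmRig_S)_{/U}$ equals $\SmRig_U$ by closure of smoothness under composition, and the inherited topology coincides with the Nisnevich topology on $\SmRig_U$; under this identification the slice-pullback is exactly $u^*$ by the explicit formula above. Since each fibre product $U_{i_0}\times_S\cdots\times_S U_{i_n}$ is again smooth (indeed \'{e}tale) over $S$, the identification propagates through the entire \v{C}ech nerve.

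The main subtlety I anticipate is verifying that the identification of slice topoi with sheaf topoi on the slice sites is \emph{natural} in the simplicial structure of the \v{C}ech nerve, so that the two limit diagrams agree coherently rather than merely objectwise. This should follow formally from the naturality of site-restriction, but deserves careful bookkeeping of the functorial equivalences.
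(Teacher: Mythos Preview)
Your overall strategy is sound and close in spirit to the paper's, but the execution differs in both parts.

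For (1), the paper simply cites \cite[Prop.~2.3.7]{AGV}, whereas you propose a direct argument via slice descent in the $\infty$-topos $\Sh_\Nis(\SmRig_S)$ together with the identification $\Sh_\Nis(\SmRig_S)_{/U}\simeq\Sh_\Nis(\SmRig_U)$ for smooth $U\to S$. This is a legitimate alternative and more self-contained; the subtlety you flag (coherence of the slice-site identifications across the \v{C}ech nerve) is real but standard, and is precisely what the cited result in \cite{AGV} packages.

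For the reduction of (2) to (1), your argument has a small gap. You correctly note that for smooth $u$ the functor $u^*$ preserves $\Arig$-invariance and commutes with $\Lmot$, which shows that $\RigH(S)$ lands in $\lim_\Delta\RigH(U_\bullet)$ under the equivalence of (1). But you do not verify the converse: that if $u_i^*F$ is $\Arig$-invariant for all $i$ then $F$ itself is $\Arig$-invariant. This is exactly what the paper isolates and proves, using that $f^*\Homline(\Arig_S,F)\simeq\Homline(\Arig_U,f^*F)$ together with the conservativity of $f^*$ (which follows from (1)). Your ingredients suffice to close this gap---from $u^*\Lmot F\simeq\Lmot u^*F\simeq u^*F$ and joint conservativity of the $u_i^*$ one concludes $\Lmot F\simeq F$---but you should state this step explicitly rather than asserting that the equivalence ``restricts''.
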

\begin{proof}
The first part follows from \cite[Prop. 2.3.7]{AGV}.\par 
For the second part, let us note that it is enough to show that, for any Nisnevich cover $f\colon U\to X$ and any Nisnevich sheaf $F$ such that $f^*F$ is $\Arig$-invariant, also $F$ is $\Arig$-invariant. This follows since 
\[ \Homline(\Arig_{U},f^*F) \simeq f^*\Homline(\Arig_X,F) \]
and since $f^*$ is conservative, where $\Homline$ denotes the internal $\Hom$ in $\Sh_\Nis(\SmRig_S)$ (this is completely analogous to the proof of \cite[Thm. 2.3.4]{AGV}).
\end{proof}

In the rest of this subsection, we want to prove that for a closed immersion $i\colon Z\inj S$ the pushforward $i_{*}$ preserves weakly contractible colimits. This will follow via direct computation.\par
Let $S$ be a rigid space. We denote by $\es/S$ the initial object in $\SmRig_{S}$. The full subcategory spanned by those presheaves $F\in \PSh(\SmRig_{S})$ such that $F(\es/S)\simeq \ast$ is denoted by $\PSh_{\es}(\SmRig_{S})$. Equivalently, we can localise $\PSh(\SmRig_{S})$ at the morphism $\es \to \es/S$, where $\es$ is the empty functor.  In particular, the inclusion $\PSh(\SmRig_{S})\to \PSh_{\es}(\SmRig_{S})$ admits a left-adjoint, denoted by $\L_{\es}$.

\begin{rem}
\label{rem.Pes.gen}
	A presheaf $F\in\PSh(\SmRig_{S})$ is contained in $\PSh_{\es}(\SmRig_{S})$ if and only if its essential image is weakly contractible. Hence the category $\PSh_{\es}(\SmRig_{S})$ is generated by representables under weakly contractible colimits (i.e.\ colimits of shape $I$ where the map $I\to *$ is cofinal).
\end{rem} 

\begin{lem}
\label{lem.i.compatible}
	Let $i\colon Z\hookrightarrow S$ be a closed immersion of rigid spaces. The functor\linebreak $i_{\es *}\colon \PSh_{\es}(\SmRig_{Z})\rightarrow  \PSh_{\es}(\SmRig_{S})$ commutes with $\L_{\Nis}$ and $\Lmot$.
\end{lem}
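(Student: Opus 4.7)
The plan is to check the two commutations separately and then combine them for $\Lmot$. First, observe that $i_{\es *}$ is, by construction, precomposition with the base-change functor $b\colon \SmRig_S \to \SmRig_Z$, $X \mapsto X \times_S Z$, so it is computed objectwise on presheaves; in particular it preserves all small limits and colimits, and it restricts to $\PSh_\es$ because $\es_S \times_S Z = \es_Z$.

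Commutation with $\L_{\Arig}$ will be almost immediate from the explicit formula in Lemma~\ref{homotopy-invariantification--lem}: both $(i_{\es *}\L_{\Arig}F)(X)$ and $(\L_{\Arig}i_{\es *}F)(X)$ compute to $\colim_{\Delta\op} F(X \times_S Z \times \Delta^{\textup{an},\bullet})$, using the canonical identification $(X \times \Delta^{\textup{an},n}) \times_S Z \simeq (X \times_S Z) \times \Delta^{\textup{an},n}$ coming from the fact that $\Delta^{\textup{an},n}$ is pulled back from $\Spa(\ZZ[T_1,\ldots,T_n],\ZZ)$ and base change commutes with products.

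Commutation with $\L_\Nis$ requires more care. The first step is to verify that $i_{\es *}$ sends Nisnevich sheaves to Nisnevich sheaves; this follows because base change along $i\colon Z\inj S$ sends Nisnevich distinguished squares (or equivalently Nisnevich covers) in $\SmRig_S$ to Nisnevich distinguished squares in $\SmRig_Z$, so the descent condition for $i_{\es *}F$ on a square $Q$ in $\SmRig_S$ is the descent condition for $F$ on $Q \times_S Z$. To upgrade this to $\L_\Nis i_{\es *}F \simeq i_{\es *}\L_\Nis F$ for every $F$, I would use the explicit sheafification formula via iterated \v{C}ech/hypercover colimits, in which $i_{\es *}$ commutes with the colimits. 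This reduces the assertion to the cofinality claim that, for every $X\in\SmRig_S$, Nisnevich hypercovers of $X\times_S Z$ in $\SmRig_Z$ are cofinally refined by base-changes of Nisnevich hypercovers of $X$. I expect this cofinality statement to be the main obstacle, and plan to establish it via the rigid-analytic lifting of étale (resp.\ smooth) neighborhoods along the closed immersion $i$, equivalently by lifting Nisnevich covers on suitable formal models modulo admissible blow-ups.

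Finally, commutation with $\Lmot$ then follows formally from the presentation $\Lmot \simeq \colim_n(\L_\Nis\L_{\Arig})^n$ together with the facts that $i_{\es *}$ preserves filtered colimits and commutes individually with $\L_\Nis$ and $\L_{\Arig}$.
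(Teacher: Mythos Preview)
Your approach is correct and close to the paper's, with one genuine difference worth noting.

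For $\L_\Nis$ you and the paper do the same thing: reduce to the cofinality statement that Nisnevich covers of $X\times_S Z$ are cofinally refined by base-changes of Nisnevich covers of $X$ (the paper cites \cite[Lem.~2.2.4]{AGV} for this; your plan to lift along the closed immersion via formal models is exactly the content of that lemma). One small correction: the $(-)^+$ sheafification uses \v{C}ech nerves of \emph{covers}, not hypercovers, so the cofinality you need is only for covers.

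For the $\Arig$-part your route diverges from the paper's. You commute $i_{\es*}$ with $\L_{\Arig}$ directly via the explicit formula $\L_{\Arig}F(X)\simeq\colim_{\Delta\op}F(X\times\Delta^{\an,\bullet})$ together with $(X\times_S\Delta^{\an,n}_S)\times_S Z\simeq (X\times_S Z)\times_Z\Delta^{\an,n}_Z$; this is clean and elementary. The paper instead shows that $i_{\es*}$ preserves the generating $\Arig$-equivalences: for each $U\in\SmRig_Z$ it exhibits an explicit $\Arig_S$-homotopy (via the multiplication map $\Arig\times\Arig\to\Arig$) witnessing that $i_{\es*}\Arig_U\to i_{\es*}U$ is an $\Arig_S$-homotopy equivalence. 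Your argument buys simplicity; the paper's buys a statement about preservation of weak equivalences that plugs directly into Ayoub's framework without appealing to the iterated presentation of $\Lmot$.

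One minor overclaim: $i_{\es*}$ does \emph{not} preserve all small colimits on $\PSh_\es$ (it fails on the initial object whenever $S\setminus Z\neq\es$; this is exactly Remark~\ref{rem.closed.no-colim}). What is true, and all you use, is that $b^*$ preserves all colimits on $\PSh$ and hence $i_{\es*}$ preserves weakly contractible (in particular filtered) colimits on $\PSh_\es$. Your final step only needs filtered colimits, so the argument stands.
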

\begin{proof}
That $i_{\es*}$ commutes with Nisnevich-sheafification follows with the same arguments as  \cite[Lem. 1.4.19]{AyoubRig}. For the sake of completion let us recall the proof.\par 
	It is enough to show $\L_{\Nis}\circ i_{\es*}\simeq i_{\es *}\circ \L_{\Nis}$. For this let $F\in\PSh_{\es}(\SmRig_{Z})$. Let $U$ be a rigid space over $S$. We can define
	\[
		(i_{\es*}F)^{+}(U)\coloneqq \colim_{(U_{i})_{i}\in\textup{Cov}_{\Nis}(U)}\lim_{\Delta}i_{\es*}F(\Cv(\coprod_{i} U_{i}/U)_{\bullet}),
    \]
	where the colimit runs over all Nisnevich coverings $(U_{i}\to U)_{i}$. The sheafification is then given by applying $(-)^{+}$ infinitely many times (via transfinite induction - cf. proof of \cite[Prop. 6.2.2.7]{HTT}). In the proof of \cite[Lem. 2.2.4]{AGV} it is shown that if $U\times_{S}Z$ is non-empty, then any Nisnevich covering of $U\times_{S}Z$ can be refined by the base change of a Nisnevich covering of $U$. In particular, the base change map $\textup{Cov}_{\Nis}(U)\to \textup{Cov}_{\Nis}(U\times_{S}Z)$ is cofinal. Thus, the definition of $\L_{\Nis}$ via the $(-)^{+}$-construction shows
	\[
		\L_{\Nis}(i_{\es*}F)(U)\simeq (\L_{\Nis}F)(U\times_{S}Z)\simeq i_{\es*}(\L_{\Nis}F)(U).
	\]
	\par 
	For the commutativity with $\L^{\#}_{\Arig}\circ \L_{\Nis}$ it is enough to show that for all $U\in\SmRig_{Z}$, we have that $p\colon i_{\es*}\Arig_{U}\to i_{\es*}U$ is an $\Arig_{S}$-equivalence (similar to the proof of \cite[Prop. 1.4.18]{AyoubRig}). The $0$-section of $\Arig_{U}\to U$ induces a section $s$ of $p$. We claim that also $s\circ p$ is $\Arig$-homotopy equivalent the identity on $i_{\es *}\Arig_{U}$. The composition
	\[
		\Arig_{S}\otimes i_{\es *}\Arig_{U}\to i_{\es *}\Arig_{Z}\otimes  i_{\es *}\Arig_{U}\simeq i_{\es *}(\Arig_{Z}\times_{Z}\Arig_{Z}\times_{Z} U)\xrightarrow{m}i_{\es *}(\Arig_{Z}\times_{Z} U), 
	\]
	where $m$ is the multiplication, naturally yields such a homotopy. In particular, $p$ is an $\Arig$-homotopy equivalence.
\end{proof}

\begin{rem}
\label{rem.closed.no-colim}
	Let us note that $i_{\es *}$ cannot preserve the initial object, when $U\coloneqq S\setminus Z\neq \es$. This is because $i_{\es*}(\es/Z)(U)\simeq \es/Z(\es/Z)\simeq \ast \not\simeq \es = \es/S(U)$. But $i_{\es *}$ preserves weakly contractible colimits, as $\PSh_{\es}(\SmRig_{Z})$ is freely generated under weakly contractible colimits in $Z$ (and similarly for $S$).
\end{rem}

The remark above immediately implies the following.

\begin{cor} \label{cor-i_*-preserves-colimits}
Let $i\colon Z\inj S$ be a closed immersion of rigid spaces. Then the functor $i_* \colon \RigH(Z) \to \RigH(S)$ preserves weakly contractible colimits.
\end{cor}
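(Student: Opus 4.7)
The plan is to reduce the claim to the facts gathered just above: namely that $i_{\es*}$ preserves weakly contractible colimits on $\PSh_{\es}(\SmRig_{Z})$ by Remark~\ref{rem.closed.no-colim}, and that $i_{\es*}$ commutes with $\Lmot$ by the preceding lemma. The first point to pin down is that $\RigH(Z)$ sits inside $\PSh_{\es}(\SmRig_{Z})$: since the empty family covers $\es$ in the Nisnevich topology, every Nisnevich sheaf sends $\es$ to the terminal object. Moreover, the definitions show that $i_{*}$ on $\RigH$ is just the restriction of $i_{\es*}$, both being given by precomposition with the base change functor $X\mapsto X\times_{S}Z$.

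Given this, let $I$ be a weakly contractible diagram and $F\colon I\to\RigH(Z)$. I would compute
\[ i_{*}\bigl(\colim_{I}^{\RigH(Z)} F\bigr) \;\simeq\; i_{\es*}\bigl(\Lmot \colim_{I}^{\PSh_{\es}} F\bigr) \;\simeq\; \Lmot\, i_{\es*}\bigl(\colim_{I}^{\PSh_{\es}} F\bigr), \]
where the first equivalence expresses colimits in $\RigH$ as $\Lmot$ of colimits in $\PSh_{\es}$ and the second uses the preceding lemma. Then applying Remark~\ref{rem.closed.no-colim} to commute $i_{\es*}$ past the weakly contractible colimit, and once more applying the commutation with $\Lmot$, one gets $\Lmot\colim_{I}^{\PSh_{\es}} i_{\es*}F \simeq \colim_{I}^{\RigH(S)} i_{*}F$, which is the desired identification.

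No real obstacle is expected: this is a formal manipulation of the two inputs named above. The only subtle point worth verifying explicitly is that colimits in $\RigH$ really are obtained by $\Lmot$ applied to colimits in $\PSh_{\es}$ (this follows from $\RigH \hookrightarrow \PSh_{\es}$ being a reflective subcategory with reflector $\Lmot$, as in Definition~\ref{RigH--definition} and Lemma~\ref{lem.loc.cartesian}), and that the identification $i_{*}|_{\RigH(Z)} \simeq i_{\es*}|_{\RigH(Z)}$ holds on the nose, which is immediate from the definitions.
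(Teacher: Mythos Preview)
Your proposal is correct and follows the same approach as the paper, which simply asserts that the corollary is immediate from Remark~\ref{rem.closed.no-colim}. You have spelled out the details the paper leaves implicit: that $\RigH$ is a reflective subcategory of $\PSh_{\es}$ with reflector $\Lmot$, that $i_{*}$ agrees with $i_{\es*}$ there, and that the commutation of $i_{\es*}$ with $\Lmot$ (the preceding lemma) together with the preservation of weakly contractible colimits by $i_{\es*}$ (the remark) yield the result.
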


The only obstruction for $i$ above not preserving colimits is that in $\RigH(S)$ the initial and the final object may not agree. This is certainly a "problem" inside $\Spc$, which will go away after stabilising. 

\subsection{Gluing}
In this section, we want to prove the existence of a
gluing theorem in the unstable setting which yields a localisation sequence in the stable setting. Such a theorem has been first proven by Morel--Voevodsky in the algebraic case \cite[Thm.~2.21]{MV1}.  In $\Brig$-homotopy theory this result is proven by Ayoub \cite{AyoubRig}. We will follow the proof structure presented by Hoyois \cite[\S 4]{Hoy-EQ}.

\vspace{6pt}\noindent\textbf{Notation.} In this subsection, we fix a rigid space $S$. 

\begin{defi} \label{def-phi-presheaf}
Let $i\colon Z\to S$ be a closed immersion and let $U:=S\setminus Z$ be its open complement. Given a pair $(X,t)$ consisting of $X\in\SmRig_S$ and a partial section $t\colon Z\to X$, i.e.\ inducing a section $t\colon Z\to X_Z := X\times_SZ$, we define the presheaf
\[ \Phi_S(X,t) := (X\sqcup_{X_U}U) \times_{i_*X_Z} S. \]
More explicitly, evaluating at $Y\in\SmRig_S$ we have that
\begin{align*}
\Phi_S(X,t)(Y) = \begin{cases} \Hom_S(Y,X)\times_{\Hom_Z(Y_Z,X_Z)}* & (Y_Z\neq\varnothing) \\ * & (Y_Z=\varnothing) \end{cases}
\end{align*}
where $\Hom_Z(Y_Z,X_Z)$ is pointed at the map $Y_Z \to Z \to[t] X_Z$. 
\end{defi}

\begin{rem}
We get a functorial assignment $(X,t) \mapsto \Phi_S(X,t)$ and for every morphism $f\colon T\to S$ there is a natural map 
\[ f^*\Phi_S(X,t) \To \Phi_T(X_T,t_T) \]
which is an isomorphism when $f$ is smooth.
\end{rem}

\begin{lem} \label{lem-phi-presheaf-etale}
Let $p\colon X'\to X$ be an \'etale morphism in $\SmRig_S$ and let $t\colon Z\to X$ and $t'\colon Z\to X'$ be partial sections which are compatible, i.e.\ $t=p\circ t'$. Then the induced map
\[ \Phi_S(p) \colon \Phi_S(X',t') \To \Phi_S(X,t) \]
is a Nisnevich equivalence.
\end{lem}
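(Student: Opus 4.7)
The plan is to prove that $\Phi_S(p)$ becomes an equivalence after Nisnevich sheafification. Using the explicit description in Definition~\ref{def-phi-presheaf}, an element of $\Phi_S(X,t)(Y)$ for $Y\in\SmRig_S$ with $Y_Z\neq\varnothing$ is an $S$-morphism $f\colon Y\to X$ whose restriction to $Y_Z$ agrees with $t\circ(Y_Z\to Z)$, while the presheaf is contractible whenever $Y_Z=\varnothing$. Thus it suffices to show that, Nisnevich-locally on any such $Y$, every $f$ lifts uniquely to a compatible $f'\colon Y\to X'$ with $f'|_{Y_Z}=t'\circ(Y_Z\to Z)$.

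For essential surjectivity, given $f\in\Phi_S(X,t)(Y)$ I form the étale pullback $q\colon Y'\coloneqq Y\times_X X'\to Y$. The compatibility $t=p\circ t'$ together with $f|_{Y_Z}=t$ produces a partial section $\sigma\colon Y_Z\to Y'$ of $q$. Since a section of an étale morphism is an open immersion, there is an open subspace $V\subseteq Y'$ containing $\sigma(Y_Z)$ for which $\sigma$ identifies $Y_Z$ with $V\times_Y Y_Z$; in particular the pair $\{V\to Y,\ Y\setminus Y_Z\hookrightarrow Y\}$ is a Nisnevich cover of $Y$ (cf.\ \cite[\S 2]{AGV}). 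On $V$ one obtains a lift as $V\hookrightarrow Y'\to X'$, on the open complement $Y\setminus Y_Z$ any choice works since $\Phi_S(X',t')$ evaluated there is contractible, and the two restrictions agree on the overlap $V\times_Y(Y\setminus Y_Z)$ because this overlap does not meet $Z$. For faithfulness, any two lifts $f'_1,f'_2$ of a common $f$ define two sections of $q$ that agree on $Y_Z$; their equaliser $E\subseteq Y$ is an open subspace containing $Y_Z$, so $\{E\to Y,\ Y\setminus Y_Z\hookrightarrow Y\}$ is a Nisnevich cover on which $f'_1$ and $f'_2$ coincide (trivially on the second patch by contractibility of the target).

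The main obstacle will be verifying in the rigid analytic setting the two geometric facts used above: that a section of an étale morphism is an open immersion, and that such an étale map with a section over a closed subspace combines with the open complement to yield a Nisnevich cover. Both should follow from the definition of the Nisnevich topology on $\Rig$ via formal models as set up in the paper together with standard properties recorded in \cite[\S 2]{AGV}, but they are the substantive geometric inputs behind an otherwise formal argument; a further mild care point is that $\Phi_S(X,t)$ is only a presheaf on $\SmRig_S$, so the gluing above must be understood at the level of $\L_\Nis\Phi_S(X',t')$ rather than representably.
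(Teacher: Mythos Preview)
Your argument is correct and is essentially the proof the paper defers to: the paper simply cites \emph{\'Etape 2} in the proof of \cite[Prop.~1.4.21]{AyoubRig}, and Ayoub's argument there is precisely the construction you outline---build a Nisnevich cover of $Y$ by an \'etale neighbourhood carrying the lift together with the open complement $Y\setminus Y_Z$. The two rigid-geometric inputs you correctly flag (that a section of an \'etale map is an open immersion, and that the resulting pair is a Nisnevich cover in the rigid sense) are exactly the points Ayoub verifies in that passage.
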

\begin{proof}
This is \emph{\'{E}tape 2} in the proof of \cite[Prop.~1.4.21]{AyoubRig}.
\end{proof}

\begin{lem} \label{lem-phi-presheaf-contractible}
Let $n\geq 1$ and let $\sigma\colon Z \inj S \to \Anrig_S$ be the restriction of the zero section of the canonical morphism $\pi\colon\Anrig_S\to S$. Then the canonical map $\Phi_S(\Anrig,\sigma) \to S$ is an $\Arig$-equivalence.
\end{lem}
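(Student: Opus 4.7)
The plan is to show that the structure map $p\colon \Phi_S(\Anrig_S, \sigma) \to S$ admits a section $s$ such that $s \circ p$ is $\Arig$-homotopic to the identity; the retraction comes from the zero section of $\Anrig_S \to S$ and the homotopy comes from scalar multiplication on $\Anrig_S$.

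For the section, observe that for any $Y \in \SmRig_S$ with $Y_Z \neq \es$, the composite $Y \to S \to \Anrig_S$ with the zero section restricts along $Y_Z \to Z$ to the partial section $\sigma$ by construction, hence defines a canonical element of $\Phi_S(\Anrig_S, \sigma)(Y)$. For $Y_Z = \es$, the set $\Phi_S(\Anrig_S, \sigma)(Y)$ is already a point. These fit together into a natural section $s\colon S \to \Phi_S(\Anrig_S, \sigma)$ of $p$ with $p \circ s = \id_S$.

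For the homotopy, I would use scalar multiplication $\mu\colon \Arig \times_S \Anrig_S \to \Anrig_S$, $(t,x) \mapsto t \cdot x$. Since $\mu(t, 0) = 0$ for every $t$, given any $f \in \Phi_S(\Anrig_S, \sigma)(Y)$ --- which for $Y_Z \neq \es$ amounts to a map $f\colon Y \to \Anrig_S$ vanishing on $Y_Z$ --- and any $t \in \Arig(Y)$, the product $t \cdot f$ still vanishes on $Y_Z$. This produces a well-defined map of presheaves
\[ h\colon \Arig \times \Phi_S(\Anrig_S, \sigma) \to \Phi_S(\Anrig_S, \sigma), \qquad (t, f) \mapsto t \cdot f, \]
whose restriction along the $0$-section $\Phi_S(\Anrig_S,\sigma) \to \Arig \times \Phi_S(\Anrig_S,\sigma)$ equals $s \circ p$ (as $0 \cdot f = 0$) and whose restriction along the $1$-section equals the identity (as $1 \cdot f = f$).

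Applying $\Lmot$ to $h$ then yields an equivalence $\Lmot(s \circ p) \simeq \Lmot(\id)$, which, combined with $p \circ s = \id_S$, shows that $\Lmot p$ is an equivalence, i.e.\ $p$ is an $\Arig$-equivalence. The only point requiring care is the compatibility of scalar multiplication with the partial-section condition on $\Phi_S$, which hinges on $\sigma$ being the \emph{zero} section (so that $t \cdot 0 = 0$); the analogous assertion for a general partial section would fail, which is why the lemma is stated specifically for $(\Anrig_S, \sigma)$ rather than for arbitrary $(X, t)$.
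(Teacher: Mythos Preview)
Your proof is correct and matches the paper's approach exactly: the paper's proof consists of the single sentence ``A nullhomotopy is given by the map $\Arig \times \Phi_S(\Anrig,t) \to \Phi_S(\Anrig,t)$, $(a,f) \mapsto af$,'' and you have simply unpacked why this scalar-multiplication map is well-defined on $\Phi_S$ and interpolates between $s\circ p$ and the identity. Your added remark about why the zero section is essential is a helpful clarification not present in the paper.
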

\begin{proof}
A nullhomotopy is given by the map
\[ \Arig \times \Phi_S(\Anrig,t) \To \Phi_S(\Anrig,t), \quad (a,f) \mapsto af. \]
\end{proof}

\begin{thm}[Gluing]
\label{thm-gluing}
Let $i\colon Z\inj S$ be a closed immersion of rigid spaces with open complement $j\colon U\inj S$. Then for every $F\in\RigH(S)$ the square
\carremaptag{$\heartsuit$}{j_\#j^*F}{\epsilon}{}{F}{\eta}{U}{}{i_*i^*F}
is a pushout square (where the maps without labels are the unique ones).
\end{thm}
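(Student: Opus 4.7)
Following \cite[\S 4]{Hoy-EQ}, the approach is to reduce to the case of representable $F$ and then exploit the $\Phi$-presheaves of Definition~\ref{def-phi-presheaf} together with Lemmas~\ref{lem-phi-presheaf-etale} and~\ref{lem-phi-presheaf-contractible}.

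First I would reduce to representables. Both columns of $(\heartsuit)$ depend functorially on $F$: the left column $F\mapsto(j_\#j^*F\to U)$ preserves all colimits, since $j^*$ and $j_\#$ are left adjoints (Proposition~\ref{prop-smooth-BC-and-PF}) and the object $U$ is constant in $F$; and the right column $F\mapsto(F\to i_*i^*F)$ preserves weakly contractible colimits, since $i^*$ is a left adjoint and $i_*$ preserves weakly contractible colimits by Corollary~\ref{cor-i_*-preserves-colimits}. Since pushouts commute with colimits, the class of $F\in\RigH(S)$ for which $(\heartsuit)$ is a pushout is stable under weakly contractible colimits. By Remark~\ref{rem.Pes.gen}, $\RigH(S)$ is generated under weakly contractible colimits by representables, so it suffices to treat $F=\Lmot(X)$ with $X\in\SmRig_S$.

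For this representable case, since $j$ is an open immersion we have $j_\#j^*\Lmot(X)\simeq\Lmot(X_U)$ inside $\RigH(S)$, and the content is that the canonical map
\[ P(X) := \Lmot(X)\sqcup_{\Lmot(X_U)}\Lmot(U) \To i_*i^*\Lmot(X) \]
in $\RigH(S)$ is an equivalence. By the Nisnevich descent of Proposition~\ref{prop-nisnevich-descent-RigH}, this can be checked Nisnevich-locally on $S$, so after refinement we may assume that $X\to S$ admits a partial section $t\colon Z\to X$. Using smoothness of $X\to S$ and a further Nisnevich refinement if necessary, we may additionally assume that $(X,t)$ receives an étale map from the standard model $(\Anrig_S,\sigma)$ with $\sigma$ the zero section. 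Lemma~\ref{lem-phi-presheaf-etale} identifies the associated $\Phi$-presheaves up to Nisnevich equivalence, and Lemma~\ref{lem-phi-presheaf-contractible} shows $\Phi_S(\Anrig,\sigma)\to S$ is an $\Arig$-equivalence. Relating both $P(X)$ and $i_*i^*\Lmot(X)$ to the motivic localisation of $\Phi_S(X,t)$ via the presheaf pushout built into Definition~\ref{def-phi-presheaf}, one then concludes by composing these equivalences.

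The main obstacle is precisely this last identification of $P(X)$ and $i_*i^*\Lmot(X)$ with $\Lmot\Phi_S(X,t)$. The right-hand side is delicate because $i_*$ only preserves weakly contractible colimits, forcing one to track carefully the interplay of motivic localisation, Nisnevich descent, and the mismatch between initial and terminal objects in $\RigH(S)$ emphasised after Corollary~\ref{cor-i_*-preserves-colimits}. The left-hand side requires matching the motivic pushout with the presheaf-level pushout $X\sqcup_{X_U}U$ appearing inside $\Phi_S(X,t)$, and then exploiting Lemma~\ref{lem-phi-presheaf-etale} to reassemble the global equivalence from the étale-local computation on $(\Anrig_S,\sigma)$.
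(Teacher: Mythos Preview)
Your reduction to representables is correct and matches the paper. The gap is in the second half: you propose to identify both $P(X)$ and $i_*i^*\Lmot(X)$ with $\Lmot\Phi_S(X,t)$ for \emph{some} partial section $t$, but this cannot be right as stated, since $\Phi_S(X,t)$ depends on $t$ while $i_*X_Z$ does not. Nisnevich-localising on $S$ to produce \emph{one} section does not suffice; you must handle all of them simultaneously.

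The paper's resolution is to avoid any such identification. One shows directly that the presheaf map
\[
X \sqcup_{X_U} U \To i_*X_Z
\]
is a motivic equivalence, and checks this after base change along \emph{every} map $Y\to i_*X_Z$ with $Y$ representable. By adjunction such a map is a partial section $t\colon Y_Z\to X_Y$, and the pulled-back map is exactly $\Phi_Y(X_Y,t)\to Y$; now Lemmas~\ref{lem-phi-presheaf-etale} and~\ref{lem-phi-presheaf-contractible} apply (after Nisnevich-localising on $Y$ and using \cite[Prop.~1.3.16]{AGV} to produce the local model $\Bnrig\hookrightarrow\Anrig$). There is a genuine subtlety here that you do not mention: the relative projection formula needed to reduce from arbitrary $Y$ to $Y=S$ requires fibre products, which do not exist in $\SmRig_S$. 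The paper therefore passes to $\PSh(\Rig_S)$ for this step (and notes that the corresponding argument in an earlier version of \cite{Hoy-EQ} had exactly this gap). Your outline, which stays implicitly inside $\SmRig_S$ and tries to match both sides to a single $\Phi$, does not reach this point.
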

\begin{proof}
The category $\RigH(S)$ is generated by representables under weakly contractible colimits (cf. Remark~\ref{rem.Pes.gen}) and all the functors in the square $(\heartsuit)$ preserve these colimits, either because they are left-adjoint or by Corollary~\ref{cor-i_*-preserves-colimits}. Hence we may assume that $F=\Lmot X$ for some $X\in\SmRig_S$ so that it suffices to show that the canonical map
\[ X \sqcup_{X_U}U \To i_*X_Z \]
in $\PSh(\SmRig_S)$ is a motivic equivalence. 
We prove more generally that is a motivic equivalence in $\PSh(\Rig_S$).\footnote{At this point there is a mistake in the proof of  \cite[Thm.~4.18]{Hoy-EQ} because the category of smooth schemes (over a fixed base) does not have fibre products; see also the following footnote. This has been fixed in the latest version on the arXiv \cite[Thm.~4.18]{Hoy-EG-arxiv-v5}. We thank Marc Hoyois for communicating to us that the same problem appeared in a previous version of our article.}
The latter can be checked after base change along all maps $Y\to i_*X_Z$ for $Y\in\Rig_S$, i.e. it suffices to show that the maps
\[ \tag{$\spadesuit$}  (X \sqcup_{X_U}U) \times_{i_*X_Z} Y \To Y \]
are equivalences. We claim that we can reduce to the case where $Y=S$.

Indeed, given $p\colon Y\to S$ in $\SmRig_S$ and denoting by $q\colon Y_Z \to Z$ and $k\colon Y_Z\inj Y$ base changes of $p$ and $i$, respectively, we see that the map $(\spadesuit)$ identifies with the map
\[ p_\#\bigl( (p^*X \sqcup_{p^*X_U}p^*U) \times_{k_*(p^*X)_{Y_Z}} Y\bigr) \to p_\#Y \]
by a relative version of the projection formula\footnote{More precisely, for a morphism $f\colon T\to S$, presheaves $E,F\in\PSh(\Rig_S)$, and $G\in\PSh(\Rig_T)$ one has that $f_\#(f^*E\times_{f^*F}G) \simeq E\times_F G$. The proof reduces to the case of representable presheaves and then uses the existence of fibre products in $\Rig_S$.} and since we have equivalences
\[ p^*i_*X_Z \simeq k_*q^*X_Z \simeq k_*q^*i^*X \simeq k_*k^*p^*X \simeq k_*(p^*X)_{Y_Z} \]
where the first one is smooth base change (Proposition~\ref{prop-smooth-BC-and-PF}).

By adjunction, the datum of a map $S\to i_*X_Z$ corresponds to a section $t\colon Z\to X_Z$. Translating into the notation from Definition~\ref{def-phi-presheaf}, we have thus reduced to having to show that the canonical map $\Phi_S(X,t) \to S$ is a motivic equivalence for all $(X,t)$ as in loc.\! cit.

By Nisnevich descent (Proposition~\ref{prop-nisnevich-descent-RigH}) we may assume that $S$ is affine and always can replace by an open subspace. Thus we find an open neighbourhood $U$ of $Z$ in $S$, an open neighbourhood $V$ of $t(Z)$ in $X$, and an isomorphism $V\cong\Bnrig_U$ for a suitable $n\geq 1$ such that $t$ identifies with the partial zero section under this isomorphism \cite[Prop.~1.3.16]{AGV}. Applying Lemma~\ref{lem-phi-presheaf-etale} to the morphism $\Bnrig\cong V\inj X$ we obtain a Nisnevich equivalence $\Phi_S(X,t)\to[\simeq]\Phi_S(\Bnrig,\sigma)$. Applying the same lemma to the open immersion $\Bnrig\inj\Anrig$ we get a Nisnevich equivalence $\Phi_S(\Bnrig,\sigma)\to[\simeq]\Phi_S(\Anrig,\sigma)$. By Lemma~\ref{lem-phi-presheaf-contractible}, the presheaf $\Phi_S(\Anrig,\sigma)$ is $\Arig$-contractible.
\end{proof}

\begin{cor}
\label{cor-closed-im-ff}
    Let $i\colon Z\rightarrow S$ be a closed immersion. Then the functor $i_{*}\colon \RigH(Z)\to \RigH(S)$ is fully faithful. 
\end{cor}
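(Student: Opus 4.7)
The assertion is equivalent to showing that the counit $\epsilon_G\colon i^*i_*G\to G$ of the adjunction $i^*\dashv i_*$ is an equivalence for every $G\in\RigH(Z)$. The plan is to derive this from two applications of Theorem~\ref{thm-gluing}, one to $F=i_*G$ and one to an arbitrary $F\in\RigH(S)$, closed off by a density argument.

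For the first application, evaluate the pushout square $(\heartsuit)$ at $F=i_*G$. Because the immersions $j\colon U\inj S$ and $i\colon Z\inj S$ have disjoint images, every $Y\in\SmRig_U$ satisfies $Y\times_SZ=\varnothing$, so $j^*i_*G$ is pointwise the terminal space and coincides with the representable object $U\in\RigH(U)$. Consequently $j_\#j^*i_*G\simeq j_\# U=U$ in $\RigH(S)$, and unwinding the definitions shows that the left vertical edge of $(\heartsuit)$ is the identity on $U$. The pushout therefore degenerates to give that the unit $\eta_{i_*G}\colon i_*G\to i_*i^*i_*G$ is an equivalence. The triangle identity $(i_*\epsilon_G)\circ\eta_{i_*G}=\id_{i_*G}$ then exhibits $i_*\epsilon_G$ as the inverse of $\eta_{i_*G}$, and in particular as an equivalence.

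For the second application, apply $(\heartsuit)$ to an arbitrary $F\in\RigH(S)$ and then apply the colimit-preserving functor $i^*$. Smooth base change (Proposition~\ref{prop-smooth-BC-and-PF}) along the open immersion $j$, combined with $Z\cap U=\varnothing$, shows that $i^*j_\#$ is constantly the initial object of $\RigH(Z)$; in particular both $i^*j_\#j^*F$ and $i^*U\simeq i^*j_\# U$ become initial. The resulting pushout identifies $i^*F$ with $i^*i_*i^*F$, so $i^*\eta_F$ is an equivalence, and the complementary triangle identity $(\epsilon_{i^*F})\circ(i^*\eta_F)=\id_{i^*F}$ forces $\epsilon_{i^*F}$ to be an equivalence for every $F\in\RigH(S)$. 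Equivalently, $\epsilon_G$ is an equivalence whenever $G$ lies in the essential image of $i^*$.

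It remains to upgrade this from the essential image of $i^*$ to all of $\RigH(Z)$, and I expect this to be the principal obstacle. Since $i^*$ is a left adjoint and $i_*$ preserves weakly contractible colimits by Corollary~\ref{cor-i_*-preserves-colimits}, both $\id$ and $i^*i_*$ preserve weakly contractible colimits, so the full subcategory of $G$ with $\epsilon_G$ an equivalence is closed under such colimits. By Nisnevich descent (Proposition~\ref{prop-nisnevich-descent-RigH}) it therefore suffices to realise each representable $\Lmot W$ with $W\in\SmRig_Z$ motivically as $i^*Y$ for some $Y\in\SmRig_S$ Nisnevich-locally on $Z$; this is achieved by the tubular neighbourhood theorem \cite[Prop.~1.3.16]{AGV} used in the proof of gluing, together with the $\Arig$-invariance of the relative closed ball appearing there.
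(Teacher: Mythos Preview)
Your Steps 1 and 2 are both correct. Step 1 is exactly the reduction the paper makes (though the paper phrases it as ``by gluing it suffices to show $i_*$ is conservative''): applying $(\heartsuit)$ to $i_*G$ shows $i_*\epsilon_G$ is an equivalence, so conservativity of $i_*$ would finish immediately. Your Step 2 is also correct but is a detour; the paper does not need it.

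Step 3 is where the argument becomes imprecise, and it contains a genuine gap. First, the citation is wrong: the tubular neighbourhood statement \cite[Prop.~1.3.16]{AGV} is about smooth morphisms with a section and is the geometric input to the gluing theorem itself, not to lifting smooth $Z$-spaces to $S$. The correct reference is \cite[Lem.~2.2.5~(2)]{AGV}, which says that every $W\in\SmRig_Z$ is, Nisnevich-locally on $W$ (not on $Z$), the base change of some $\tilde W\in\SmRig_S$; the remark about $\Arig$-invariance of the closed ball is not relevant here. Second, even with the correct lifting input, your density argument as written is incomplete: the \v{C}ech nerve of a cover of $W$ by liftable pieces $W_\alpha$ involves fibre products $W_\alpha\times_W W_\beta$ which need not themselves be liftable, so it is not obvious that $\Lmot W$ lies in the subcategory generated by the essential image of $i^*$ under weakly contractible colimits. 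This can be repaired (for instance via a comparison-lemma argument showing that the liftable objects generate the same Nisnevich topos), but you have not done so.

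The paper's route is shorter and sidesteps this issue entirely: argue conservativity of $i_*$ directly. Given $f\colon F\to G$ in $\RigH(Z)$ with $i_*f$ an equivalence, one tests $f$ on sections over each $X\in\SmRig_Z$; since $F,G$ are Nisnevich sheaves this may be done Nisnevich-locally on $X$, so by the lifting lemma one may assume $X=\tilde X\times_S Z$, and then $f(X)=(i_*f)(\tilde X)$ is an equivalence. Combined with your Step 1 this finishes, and Step 2 becomes unnecessary.
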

\begin{proof}
    This is standard\footnote{Proofs in other situations can be found in \cite[1.4.23]{AyoubRig}, \cite[Cor. 4.19]{Hoy-EQ}, or \cite[Cor. 2.2.2 (1)]{AGV}.} and follows from Theorem~\ref{thm-gluing}, smooth base change (Proposition~\ref{prop-smooth-BC-and-PF}), and Nisnevich descent. Nevertheless, let us give a proof for completion.

    We have to show that for any $F\in \RigH(Z)$ the counit $i^{*}i_{*}F\rightarrow F$ is an equivalence. Using Theorem~\ref{thm-gluing}, it is enough to show that $i_{*}$ is conservative. In particular, it is enough to show that for $f\colon F\rightarrow G$ in $\RigH(Z)$ such that $i_{*}f$ is an equivalence, we have that $f$ is an equivalence. By Remark~\ref{rem.Pes.gen} it is suffices to test that $f$ is an equivalence at any $X\in \SmRig_{Z}$. The proof of \cite[Lem. 2.2.5 (2)]{AGV} shows that we may assume that there exists an $S_{X}\in \SmRig_{S}$ such that $X = S_{X}\times_{S} Z$. In particular, $f(X)$ is equivalent to $i_{*}f(S_X)$.
\end{proof}

\begin{cor}
    Let us consider a cartesian square
    \[ 
    \begin{tikzcd}
			T_{Z}\arrow[r,"i'",hookrightarrow]\arrow[d,"f'"]& T\arrow[d,"f"]\\
			Z\arrow[r,"i",hookrightarrow]&S
	\end{tikzcd}
    \]
    where $i$ is a closed immersion. Then the exchange transformation $f^{*}i_{*}\to i'_{*}f'^{*}$ is an equivalence.
\end{cor}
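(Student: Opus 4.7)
The strategy is to establish two facts for every $G \in \RigH(Z)$: (i) both $f^{*}i_{*}G$ and $i'_{*}f'^{*}G$ lie in the essential image of $i'_{*}$, and (ii) the exchange map becomes an equivalence after applying $i'^{*}$. Since $i'_{*}$ is fully faithful by Corollary~\ref{cor-closed-im-ff}, hence conservative, these together force the exchange map to be an equivalence. To carry out (i), I first characterise the essential image: letting $j'\colon U' \coloneqq T \setminus T_{Z} \inj T$ denote the complementary open immersion, an object $H \in \RigH(T)$ lies in the essential image of $i'_{*}$ if and only if $j'^{*}H \simeq *_{U'}$. The ``only if'' direction is immediate from the definition of $i'_{*}$, since $(i'_{*}H')(Y) = H'(Y\times_{T} T_{Z}) = H'(\varnothing) \simeq *$ for $Y\in\SmRig_{U'}$ (using that Nisnevich sheaves send $\varnothing$ to the terminal object). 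For the converse, I use Theorem~\ref{thm-gluing}: noting that $j'_{\#}(*_{U'}) \simeq j'_{\#}(U'_{U'}) \simeq U'$ in $\RigH(T)$, the assumption $j'^{*}H \simeq *_{U'}$ makes the pushout square collapse to the equivalence $H \simeq i'_{*}i'^{*}H$.

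The right-hand side $i'_{*}f'^{*}G$ tautologically lies in the essential image. For the left-hand side, I compute $j'^{*}f^{*}i_{*}G$. Writing $g\colon U'\to U$ for the restriction of $f$, the identity $f\circ j' = j\circ g$ yields $j'^{*}f^{*} \simeq g^{*}j^{*}$ by functoriality. A direct calculation gives $j^{*}i_{*}G \simeq *_{U}$ (since $Y\times_{S} Z = \varnothing$ for $Y \in \SmRig_{U}$). Because $g^{*}$ is symmetric monoidal by Remark~\ref{rem-RigH-monoidal}, it preserves the terminal object, so $j'^{*}f^{*}i_{*}G \simeq g^{*}(*_{U}) \simeq *_{U'}$, proving that $f^{*}i_{*}G$ also lies in the essential image of $i'_{*}$.

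For (ii), apply $i'^{*}$ to the exchange map and use the functorial identity $i'^{*}f^{*} = (f\circ i')^{*} = (i\circ f')^{*} \simeq f'^{*}i^{*}$, the counit equivalence $i^{*}i_{*} \simeq \id$ from Corollary~\ref{cor-closed-im-ff}, and $i'^{*}i'_{*} \simeq \id$. By the standard mate identities for the adjunctions $i^{*}\dashv i_{*}$ and $i'^{*}\dashv i'_{*}$, the induced map $i'^{*}(f^{*}i_{*}G) \to i'^{*}(i'_{*}f'^{*}G)$ identifies with the equivalence $f'^{*}i^{*}i_{*}G \simeq f'^{*}G$. Combining (i) and (ii) completes the proof. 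The one nonformal input is that $g^{*}$ preserves the terminal object, which is not automatic for a left adjoint and crucially uses the symmetric monoidality of $g^{*}$ with respect to the cartesian structure; everything else is a formal consequence of the gluing theorem together with full faithfulness of closed pushforward.
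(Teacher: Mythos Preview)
Your proof is correct and follows essentially the same strategy as the paper's (one-line) proof: both rely on the gluing theorem and the full faithfulness of closed pushforward (Corollary~\ref{cor-closed-im-ff}). The paper phrases the remaining input as ``smooth base change'' from Proposition~\ref{prop-smooth-BC-and-PF}, meaning one applies $f^{*}$ to the gluing square on $S$, uses $f^{*}j_{\#}\simeq j'_{\#}g^{*}$ for the open immersion $j$, and compares with the gluing square on $T$. Your organisation is slightly different and in fact a bit more elementary: by characterising the essential image of $i'_{*}$ as the locus where $j'^{*}$ is terminal, you only need the functoriality identity $j'^{*}f^{*}\simeq g^{*}j^{*}$ and the fact that $g^{*}$ preserves the terminal object, rather than the full $j_{\#}$--base-change statement. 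The two arguments are interchangeable, and yours has the virtue of making the role of the monoidal unit explicit.
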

\begin{proof}
    This follows from Corollary~\ref{cor-closed-im-ff} and the smooth base change of Proposition~\ref{prop-smooth-BC-and-PF}.
\end{proof}

\section{Stable  \texorpdfstring{$\AA^{1}$}~-homotopy theory in rigid geometry}
\vspace{6pt}

 \noindent In this section we want to define the stable $\Arig$-homotopy category. To do so, we first want to show that the effective version satisfies all the functorial properties as $\RigH$. Afterward, we $\otimes$-invert the pointed projective line. This will yield a stable homotopy functor in the sense of Ayoub \cite{AyoubThesis}. In particular, using the results of op.\!\! cit., we will obtain a six-functor formalism with respect to algebraic maps. 

\vspace{6pt}\noindent\textbf{Notation.} In this section, let $S$ be a rigid space and let $\Vcal\in\CAlgPr$. For results using the analytification functor, let $R$ be an adic ring with ideal of definition $I$ and set $B:=\Spec(R)\setminus\textup{V}(I)$ and $B^\an=\textup{Spf}(R)^\textup{rig}$.

\subsection{Change of coefficients}
\label{sec.change.of.coeff.monoidal}

In Section \ref{sec.unstable}, we constructed the rigid $\Arig$-homotopy category via $\Arig$-localisation of Nisnevich sheaves of spaces. In this short subsection, we want to explain how we can extend our results to $\Arig$-invariant Nisnevich sheaves with $\Vcal$-valued coefficients.

\begin{rem}
\label{rem-change-of-coeff}
	We have $\Sh_{\Nis}(\SmRig_{S})\otimes\Vcal\simeq \Sh_{\Nis}(\SmRig_{S},\Vcal)$ \cite[Prop. 2.4]{DrewMHM}. This is also compatible with $\Arig$-localisation, hence
	\[
        \RigH(S)\otimes\Vcal\simeq\RigH(S,\Vcal).
	\]
    Furthermore, by Remark~\ref{rem-RigH-monoidal} the category $\RigH(S)$ is a commutative algebra object in $\PrLO$. Thus, the same is true for $\RigH(X)\otimes\Vcal$. In particular, the $\otimes$-product in $\RigH(S)\otimes\Vcal$ is symmetric and commutes with arbitrary colimits in both variables.
\end{rem}

With the above discussion it is not hard to see that the base change of a pullback formalism is again a pullback formalism \cite[\S 8]{DrewMHM}. As our construction is not directly covered by loc.\!\! cit., let us be more precise and give a statement with proof for completion.

\begin{lem}
\label{lem-effective-coeff}
	 The induced functor 
        $$
        \RigH\otimes \Vcal \colon\Rig\op\to \CAlgPr_{/\Vcal},\quad S\mapsto \RigH(S)\otimes \Vcal,\quad f\mapsto f^{*}\otimes\id_{\Vcal}
        $$ 
        has the following properties.
	\begin{enumerate}
		\item[(1)] For any smooth morphism $f$ inside $\Rig$ the pullback $f^{*}\otimes \id_{\Vcal}$ admits a left-adjoint $f_{\sharp}\otimes \id_{\Vcal}$ satisfying smooth base change and projection formula (cf. Proposition~\ref{prop-smooth-BC-and-PF}). 
		\item[(2)] For each $S\in\Rig$ and natural projection $p\colon \Arig_{S}\rightarrow X$ the functor $p^{*}\otimes \id_{\Vcal}$ is fully faithful.
	\end{enumerate}
\end{lem}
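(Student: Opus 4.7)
The plan is to reduce both assertions to the corresponding statements already established for $\RigH$ (Proposition~\ref{prop-smooth-BC-and-PF} together with the $\Arig$\hyp{}invariance built into $\RigH$), making use of the identification $\RigH(S)\otimes\Vcal\simeq\RigH(S,\Vcal)$ from Remark~\ref{rem-change-of-coeff} and of the fact that the Lurie tensor product is 2\hyp{}functorial on $\PrLO$ and therefore preserves adjunctions, their units and counits, and natural equivalences between colimit\hyp{}preserving functors.

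For assertion (1), let $f\colon S\to T$ be smooth. Proposition~\ref{prop-smooth-BC-and-PF} gives an adjunction $f_\sharp\dashv f^*$ in which both functors are colimit\hyp{}preserving, so tensoring with $\id_\Vcal$ yields an adjunction $f_\sharp\otimes\id_\Vcal\dashv f^*\otimes\id_\Vcal$ in $\PrL$. The smooth base change equivalence $f'_\sharp g'^*\simeq g^*f_\sharp$ passes through to an equivalence after tensoring. For the projection formula one additionally observes that the symmetric monoidal structure on $\RigH(S,\Vcal)$ coincides with the one on $\RigH(S)\otimes\Vcal$ inherited from the commutative algebra structure in $\CAlgPr$ (Remarks~\ref{rem-RigH-monoidal} and~\ref{rem-change-of-coeff}); the natural transformation $f_\sharp(f^*(-)\otimes-)\to(-)\otimes f_\sharp(-)$ of colimit\hyp{}preserving bifunctors, which is an equivalence by Proposition~\ref{prop-smooth-BC-and-PF}, then passes to an equivalence after tensoring with $\id_\Vcal$.

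For assertion (2), I first check the statement in $\RigH(S)$. The projection $p\colon\Arig_S\to S$ is smooth, so $p_\sharp\dashv p^*$; concretely, $p_\sharp$ sends a representable $(Y\to\Arig_S)$ to $(Y\to\Arig_S\to S)$ and $p^*$ sends a representable $(X\to S)$ to $(X\times_S\Arig_S\to\Arig_S)$. Evaluating the counit $p_\sharp p^*\to\id$ at a representable $X\in\SmRig_S$ yields the projection $X\times_S\Arig_S\to X$, which is an $\Arig$\hyp{}equivalence in $\RigH(S)$ by Lemma~\ref{lem.loc.cartesian} applied to the basic $\Arig$\hyp{}equivalence $\Arig_S\to S$. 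Since representables generate $\RigH(S)$ under colimits and both $p_\sharp p^*$ and the identity preserve colimits, the counit is an equivalence of endofunctors of $\RigH(S)$, i.e.\ $p^*$ is fully faithful. Tensoring with $\id_\Vcal$ then gives $(p_\sharp\otimes\id_\Vcal)(p^*\otimes\id_\Vcal)\simeq(p_\sharp p^*)\otimes\id_\Vcal\simeq\id$, whence $p^*\otimes\id_\Vcal$ is also fully faithful.

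The only genuine subtlety is the 2\hyp{}functoriality of $-\otimes\Vcal$, i.e.\ the fact that it preserves adjunctions, their defining 2\hyp{}cells, and natural equivalences between colimit\hyp{}preserving functors; this is a standard property of the Lurie tensor product, cf.\ \cite[\S 4.8]{HA}, and parallels the coefficient\hyp{}change arguments of \cite[\S 8]{DrewMHM}.
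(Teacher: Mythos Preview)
Your proof is correct and follows essentially the same approach as the paper: both reduce (1) and (2) to the corresponding statements for $\RigH$ (Proposition~\ref{prop-smooth-BC-and-PF} and the built\hyp{}in $\Arig$\hyp{}invariance) and then transport them along $-\otimes\Vcal$. The only cosmetic difference is that the paper makes the transport explicit via the identification $\RigH(S)\otimes\Vcal\simeq\Fun^{R}(\RigH(S)\op,\Vcal)$ \cite[Prop.~4.8.1.17]{HA} and precomposition with the opposite adjunction (following \cite[Lem.~8.4]{DrewMHM}), whereas you invoke the 2\hyp{}functoriality of the Lurie tensor product abstractly; you also spell out the counit argument for the full faithfulness of $p^*$ in $\RigH(S)$, which the paper leaves implicit.
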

\begin{proof}
The proof is similar to \cite[Prop. 8.5]{DrewMHM} but let us give an argument nonetheless.\par
	If $f$ is smooth, then it admits a left-adjoint as this is true for $\RigH$ \cite[Lem. 8.4]{DrewMHM}. The main argument is that $\RigH(S)\otimes\Vcal\simeq \Fun^{R}(\RigH(S)\op,\Vcal)$ by \cite[Prop 4.8.1.17]{HA}, where $\Fun^{R}$ denotes those functors that admit a left-adjoint. Composing with $f^{*,\opp}\leftrightarrows f_{\sharp}\op$ for $\RigH\op$ yields the desired adjunction. This construction allows us to interpret smooth base change and the projection formula as composing with the transpose of the exchange transformation resp. the projection formula in $\RigH$, which is an equivalence by Proposition~\ref{prop-smooth-BC-and-PF}. This shows (1) and analogously also (2). 
\end{proof}

\subsection{Effective motives and localisation}

\vspace{6pt}\noindent\textbf{Notation.} 
For the rest of this section, we will assume that $\Vcal$ is furthermore stable.

\begin{defi}
	We define the \textit{effective stable homotopy category with coefficients $\Vcal$} by 
	\[
		\RigSH^{\eff}(S,\Vcal)\coloneqq\RigH(S)\otimes \Vcal.
	\]
	We denote the induced functor by $\RigSH^{\eff}_{\Vcal}$ and if $\Vcal\simeq \Sp$, then we simply write $\RigSH^\eff(S)$ and $\RigSH^{\eff}$. 
\end{defi}

\begin{rem}
	By Remark~\ref{rem-change-of-coeff}, we see that $\RigSH^{\eff}_\Vcal$ satisfies Nisnevich descent. Further, by \cite[Ex. 4.8.1.23]{HA} $\RigSH^{\eff}(S)$ is equivalent to the stabilisation of $\RigH(S)$.
\end{rem}

\begin{notation}
	Let $f\colon T\rightarrow S$ be a morphism of rigid spaces and $\Vcal$ a symmetric monoidal stable presentable category. Abusing notation, we will simply write 
    $$
    f^{*}\colon \RigSH^{\eff}(S,\Vcal)\to \RigSH^{\eff}(T,\Vcal)
    $$
    instead of $f^{*}\otimes \id_{\Vcal}$ and similarly $f_{*}$ for its right-adjoint resp. $f_{\sharp}$ for its left-adjoint (if it exists).
\end{notation}

\begin{rem}
	 We have seen in Remark~\ref{rem.closed.no-colim} that for closed immersions $i\colon Z\hookrightarrow S$ the pushforward $i_{*}$ on $\RigH$ does not preserve colimits in general. The obstruction is also very clear, in $\RigH$ the initial and final object do not agree. But $i_{*}$ on $\RigSH^{\eff}(S,\Vcal)$ does certainly preserve the initial object, as this obstruction vanishes. To be more precise, the inclusion $\iota\colon \PSh_\emptyset(\SmRig_S,\Vcal)\hookrightarrow \PSh(\SmRig_S,\Vcal)$ admits a right-adjoint $r$ given by $F\mapsto F\times_{F(\emptyset/S)} 0$ thus also $i_{\emptyset*}$ admits a right-adjoint, or equivalently commutes with all colimits. Thus by Lemma \ref{lem.i.compatible} the functor $i_*$ commutes with colimits.  
\end{rem}

\begin{notation}
	Let $i\colon Z\rightarrow S$ be a closed immersion. Then we denote the right-adjoint of $i_{*}$ by $i^{!}$.
\end{notation}

From the gluing in Theorem~\ref{thm-gluing} we immediately obtain a localisation sequence.
 
\begin{prop}[Localisation]
\label{prop-localisation}
    Let $i\colon Z\inj S$ be a closed immersion with open complement $j\colon U\inj S$. Then for any $F\in\RigSH^{\eff}(S,\Vcal)$, we have fibre sequences
    \[
    \begin{tikzcd}[row sep = 0.3em, column sep = 1.7em]
        j_{\sharp}j^{*}F\arrow[r,""] & F \arrow[r,""]&i_{*}i^{*}F,\\
        i_{*}i^{!}F\arrow[r,""] &F\arrow[r,""] & j_{*}j^{*}F.
    \end{tikzcd}
    \]
\end{prop}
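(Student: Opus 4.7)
The plan is to deduce both fibre sequences from Theorem~\ref{thm-gluing}, working with the following identities in the stable category $\RigSH^\eff(S,\Vcal)$: the fully faithfulness relations $j^*j_\# \simeq \id$ and $i^*i_* \simeq \id$ (the latter by Corollary~\ref{cor-closed-im-ff}), together with the vanishings $j^*i_* \simeq 0$ and $i^*j_\# \simeq 0$. The latter follow from smooth base change (Proposition~\ref{prop-smooth-BC-and-PF}) through the empty pullback $U\times_S Z = \varnothing$, once we observe that in the stable setting the pushforward from the empty scheme lands in the zero object (rather than the terminal object as in $\RigH$).

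For the first fibre sequence, I observe that the composite $j_\#j^*F \to F \to i_*i^*F$ is nullhomotopic: under the adjunction $j_\#\dashv j^*$ it corresponds to the map $j^*F \to j^*i_*i^*F \simeq 0$. This yields a canonical comparison map $\alpha\colon j_\#j^*F \to \mathrm{fib}(F \to i_*i^*F)$. Applying $j^*$ converts $\alpha$ into the identity on $j^*F$ (using $j^*j_\# \simeq \id$ and $j^*i_* \simeq 0$), while $i^*\alpha$ becomes the identity on $0$ (using $i^*j_\# \simeq 0$ and $i^*i_* \simeq \id$). Joint conservativity of the pair $(j^*,i^*)$ on $\RigSH^\eff(S,\Vcal)$, which follows from Nisnevich descent combined with the unstable gluing of Theorem~\ref{thm-gluing}, then forces $\alpha$ to be an equivalence.

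The second fibre sequence follows formally from the first. Setting $K := \mathrm{fib}(F \to j_*j^*F)$, the fully faithfulness of $j_*$ for the open immersion $j$ gives $j^*j_* \simeq \id$, hence $j^*K \simeq 0$. Applying the first fibre sequence to $K$ yields $K \simeq i_*i^*K$, so $K$ lies in the essential image of $i_*$. To identify $i^*K$ with $i^!F$, I test against arbitrary $G \in \RigSH^\eff(Z,\Vcal)$:
\[
\Map(i_*G, K) \simeq \mathrm{fib}\bigl(\Map(G,i^!F) \to \Map(j^*i_*G, j^*F)\bigr) \simeq \Map(G, i^!F),
\]
where the second fibre term is contractible since $j^*i_*G \simeq 0$. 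Simultaneously, $\Map(i_*G, K) \simeq \Map(i_*G, i_*i^*K) \simeq \Map(G, i^*K)$ by fully faithfulness of $i_*$, so Yoneda produces $i^*K \simeq i^!F$, whence $K \simeq i_*i^!F$.

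The main obstacle is the first fibre sequence: rigorously establishing the base change vanishing $j^*i_* \simeq 0$ in $\RigSH^\eff$ (in contrast to $\RigH$, where this composition lands in the terminal sheaf rather than the zero object) and deducing joint conservativity of $(j^*, i^*)$ from the unstable gluing theorem. Once these are in hand, the second sequence is a routine formal manipulation of the adjunctions $i^* \dashv i_* \dashv i^!$ and $j^* \dashv j_*$.
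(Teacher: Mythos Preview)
Your approach is a standard six-functor template and is genuinely different from the paper's. The paper does not go through joint conservativity at all: it (i) rephrases localisation as a pullback square of presentable categories, (ii) reduces general $\Vcal$ to $\Vcal=\Sp$ via the $\Fun^R$ description of the Lurie tensor product (as in Lemma~\ref{lem-effective-coeff}), (iii) writes $\RigSH^{\eff}(-)\simeq\lim_\Omega\RigH(-,\Spc_*)$ in $\Pr^R$ to reduce to a fibre sequence in the pointed unstable category, and (iv) derives the pointed cofibre sequence directly from the unpointed gluing pushout of Theorem~\ref{thm-gluing} using that the forgetful functor $\Spc_*\to\Spc$ preserves and reflects pushouts. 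The two fibre sequences are then handled simultaneously rather than deriving the second from the first.

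The gap in your argument is the conservativity step. Your appeal to ``Nisnevich descent combined with the unstable gluing'' does not do the job: the pair $\{U,Z\}$ is not a Nisnevich cover, so descent is irrelevant, and joint conservativity of a pair of left adjoints is not in general preserved under $-\otimes\Vcal$ in $\PrL$. To establish conservativity of $(j^*,i^*)$ on $\RigSH^{\eff}(S,\Vcal)$ for arbitrary stable $\Vcal$ you would in effect have to carry out the paper's reduction to $\Spc_*$ and then invoke Theorem~\ref{thm-gluing} there---at which point you have already proved the cofibre sequence directly, and the detour through conservativity is redundant. A smaller point: Corollary~\ref{cor-closed-im-ff} gives $i^*i_*\simeq\id$ only in $\RigH$; its validity in $\RigSH^{\eff}(S,\Vcal)$ requires knowing that the stable $i_*$ agrees with the base-changed unstable $i_*$, which is not automatic since the unstable $i_*$ fails to preserve the initial object and hence does not lie in $\PrL$. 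Once these issues are addressed, your derivation of the second sequence from the first is a clean formal manipulation---but the hard content has been relocated rather than eliminated.
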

\begin{proof}
    The statement of the lemma is equivalent to the following statement. For any closed immersion $i\colon Z\hookrightarrow S$ in $\Rig$ with open complement $j\colon U\hookrightarrow S$, the square 
		\[
			\begin{tikzcd}
				\RigH(U)\otimes \Vcal\arrow[r,"j_{\sharp}\otimes\id_{\Vcal}"]\arrow[d,""]& \RigH(S)\otimes \Vcal\arrow[d,"i^{*}\otimes\id_{\Vcal}"]\\
				\ast\arrow[r,""]&\RigH(Z)\otimes \Vcal
			\end{tikzcd}
		\]
		is a pushout\footnote{Note that the equivalence $\PrL \simeq (\Pr^{\mathrm{R}})^{\op}$ maps pushout squares to pullback squares \cite[Cor. 5.5.3.4]{HTT}.} in $\PrL$ \cite[Prop.~9.4.20]{Robalo-Thesis}.
        In particular, as the functor $-\otimes \Vcal$ preserves colimits, we can use the same arguments as in the proof of Lemma~\ref{lem-effective-coeff} to assume $\Vcal\simeq \Sp$. In view of Remark~\ref{rem-change-of-coeff}, we have $\RigSH^{\eff}(-)\simeq \Sp(\RigH(-,\Spc_{*}))$, where $\Spc_{*}$ denotes the category of pointed spaces. We have an equivalence 
        $$ \Sp(\RigH(-,\Spc_{*}))\simeq \lim(\dots\xrightarrow{\Omega}\RigH(-,\Spc_{*}))
        $$
        inside $\Pr^{R}$ \cite[Prop. 1.4.2.24]{HA}. In particular, the second sequence of the proposition is a fibre sequence if and only if it is so in $\RigH(-,\Spc_{*})$. Since the first sequence is the adjoint of the second one, we may assume $\Vcal\simeq \Spc_*$.

        We will show that the first sequence is a cofibre sequence in $\RigH(-,\Spc_{*})$, which proves that the latter one is a fibre sequence. For this let us note that the forgetful functor $p\colon \Spc_*\rightarrow \Spc$ takes pushout squares to pushout squares and reflects them \cite[Prop. 4.4.2.9]{HTT}. As in the proof of \cite[Prop. 5.2]{Hoy-EQ}  let us look at the following two squares
        $$
         \begin{tikzcd}
            j_{\sharp}j^*p(F)\arrow[r,""]\arrow[d,""]&j_{\sharp}j^*p(F)\coprod_{U} S\arrow[r,""]\arrow[d,""]& p(F)\arrow[d,""]\\
				U\arrow[r,""]&S\arrow[r,""]&i_*i^*p(F).
		\end{tikzcd}
        $$
        The outer square is a pushout by Theorem~\ref{thm-gluing} (note that $F\in \RigSH^{\eff}(S,\Vcal)$ comes equipped with a zero section). The left square is by design a pushout. So the right square is also a pushout, which is also the image of the first sequence of the proposition under $p$.
\end{proof}

\subsection{Thom motives and stabilisation}
\label{sec.thom-motives-stabilisation}

 For any vector bundle $p\colon V\rightarrow S$ with zero section $s\colon S\rightarrow V$, we can define the suspension $\Sigma_{V}\coloneqq p_{\sharp}s_{*}$ and loop $\Omega_{V}\coloneqq s^{!}p^{*}$. Note that this yields an adjunction
\[
	 \begin{tikzcd}
		\Sigma_{V}\colon\RigSH^{\eff}(S,\Vcal)\arrow[r,"",shift left = 0.3em]&\arrow[l,"",shift left = 0.3em]\RigSH^{\eff}(S,\Vcal)\colon \Omega_{V}.
	\end{tikzcd}
\]  

\begin{lem}
\label{lem-thom}
     Then the following are equivalent.
    \begin{enumerate}
        \item[(i)] For any smooth morphism $f\colon T\rightarrow S$ of rigid spaces that admits a section $s$, the Thom transformation $\textup{Th}(f,s)\coloneqq f_{\sharp}s_{*}$ is an equivalence.
        \item[(ii)] For any vector bundle $V$ on $S$ the suspension of the unit $\Sigma_{V}1_V$ is $\otimes$-invertible inside $\RigSH^{\eff}(S,\Vcal)$.
        \item[(iii)] The suspension of the unit of the closed unit disc $\Sigma_{\BB^{1}_S}1_{\BB^{1}_{S}}$ is $\otimes$-invertible.
    \end{enumerate}
\end{lem}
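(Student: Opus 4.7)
The plan is to prove the implications (i)$\Rightarrow$(iii), (iii)$\Rightarrow$(ii), and (ii)$\Rightarrow$(i). The key technical device is a \emph{Thom projection formula}: for any smooth $p\colon V\to S$ with section $s\colon S\to V$ and any $F\in\RigSH^{\eff}(S,\Vcal)$ there is a natural equivalence
\[
\Sigma_V F\;\simeq\;F\otimes\Sigma_V 1_S
\]
in $\RigSH^{\eff}(S,\Vcal)$. I will establish this by applying the localisation cofibre sequence of Proposition~\ref{prop-localisation} to the closed immersion $s$ with open complement $j\colon V\setminus s(S)\hookrightarrow V$ and combining it with the smooth projection formula for $j$ from Lemma~\ref{lem-effective-coeff}(1), yielding a closed-immersion projection formula $s_*F\simeq p^*F\otimes s_*1_S$; the smooth projection formula for $p$ then finishes. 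A direct consequence is that $\Sigma_V$ is an equivalence of categories if and only if $\Sigma_V 1_S$ is $\otimes$-invertible.

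Granting this corollary, (i)$\Rightarrow$(iii) is immediate by specialising (i) to $\pi\colon\Brig_S\to S$ with its zero section. For (iii)$\Rightarrow$(ii), let $V$ be a vector bundle of rank $n$ on $S$. Invertibility of $\Sigma_V 1_S$ is Nisnevich-local by Proposition~\ref{prop-nisnevich-descent-RigH}, so one may assume $V=\Anrig_S$. A computation with base change and the projection formulas yields multiplicativity of the Thom construction in the form
\[
\Sigma_{\Anrig_S}1_S\;\simeq\;(\Sigma_{\Arig_S}1_S)^{\otimes n},
\]
reducing the problem to invertibility of $\Sigma_{\Arig_S}1_S$. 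Finally, the presentation $\Arig\simeq\colim_{t\mapsto\pi t}\Brig$ of~\eqref{eq-Arig-colim}, combined with a small excision argument on the cofinal tower of nested discs, yields a motivic equivalence $\Sigma_{\Arig_S}1_S\simeq\Sigma_{\Brig_S}1_S$, so (iii) gives the desired invertibility.

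For (ii)$\Rightarrow$(i), let $f\colon T\to S$ be smooth with section $s$. Since $s$ is automatically a regular closed immersion, it has a normal bundle $N_s$, which is a vector bundle on $S$. The strategy is to prove a rigid-analytic motivic tubular-neighbourhood theorem, identifying $f_\#s_*\simeq\Sigma_{N_s}$ as functors $\RigSH^{\eff}(S,\Vcal)\to\RigSH^{\eff}(S,\Vcal)$. Following Morel--Voevodsky, this is done by constructing the deformation to the normal bundle $D_sT\to\Arig_S$, whose fibres at $1$ and $0$ are $T$ and $N_s$ respectively, and using $\Arig$-invariance together with the gluing theorem (Theorem~\ref{thm-gluing}) to show that both fibres induce the same Thom transformation. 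Given this identification, assumption (ii) applied to the vector bundle $N_s$ shows that $\Sigma_{N_s}$, and hence $f_\#s_*$, is an equivalence.

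The main obstacle will be the (ii)$\Rightarrow$(i) step: although the deformation-to-the-normal-bundle argument is classical, its execution in rigid geometry requires care, in particular to construct $D_sT$ for rigid smooth morphisms admitting a section and to verify via the gluing theorem that the two fibres produce equivalent Thom functors. A secondary but still delicate point is the identification $\Sigma_{\Arig}1_S\simeq\Sigma_{\Brig}1_S$ used in (iii)$\Rightarrow$(ii), which amounts to a rigid-analytic excision statement for the cofinal tower $\Arig\simeq\colim\Brig$ and should follow from the localisation sequence together with Nisnevich descent, but still needs verification.
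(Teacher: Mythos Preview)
Your proposal is correct, but the paper takes a much shorter route: it simply invokes \cite[\S 2.4]{CD1} and records the one rigid-geometric ingredient needed, namely that any smooth morphism $f\colon T\to S$ factors locally as an \'etale map $T\to\BB^n_S$ followed by the projection. This local factorisation reduces the general Thom transformation $f_\sharp s_*$ directly to the case of $\BB^n_S$ (using that an \'etale change of neighbourhood of the section does not alter the Thom transformation, essentially the mechanism behind Lemma~\ref{lem-phi-presheaf-etale}), so the deformation-to-the-normal-bundle construction is never needed. Your purity approach is the other classical route and is equally valid, but it obliges you to construct the deformation space $D_sT$ in rigid geometry and verify its properties, whereas the local factorisation is already available and used in the proof of Theorem~\ref{thm-gluing}.

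On your secondary obstacle, the identification $\Sigma_{\Arig_S}1_S\simeq\Sigma_{\Brig_S}1_S$: the paper handles this in the Remark immediately following the lemma, and by a simpler argument than the colimit-over-discs excision you outline. One observes that $\Arig=\Brig\cup(\Arig\setminus\{0\})$ with intersection $\Brig\setminus\{0\}$ is a Zariski cover, so the square
\[
\begin{tikzcd}
\Brig\setminus\{0\}\ar[r]\ar[d] & \Arig\setminus\{0\}\ar[d]\\
\Brig\ar[r] & \Arig
\end{tikzcd}
\]
is a pushout in $\RigSH^{\eff}(S,\Vcal)$, whence the cofibres of the vertical maps agree.
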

\begin{proof}
    Equivalences $(i)\Leftrightarrow (ii)\Leftrightarrow (iii)$ are standard and follow from the fact that any smooth morphism $f\colon T\rightarrow S$ of rigid spaces factors locally as an \'etale morphism $T\rightarrow \BB^{n}_S$ followed by the projection \cite[\S 2.4]{CD1}.
\end{proof}

\begin{rem}
\label{Rem:Tate-twist}
    Let $V\rightarrow S$ be a vector bundle. The localisation sequence in Proposition~\ref{prop-localisation} implies that 
    $$
    \Sigma_{V}1_{V}\simeq \frac{V}{V\setminus \lbrace 0\rbrace},
    $$
    where the right-hand side denotes the cofibre of the natural open immersion $V\setminus\lbrace 0\rbrace \rightarrow V$ induced by the zero section.
    
    Further, note that we have a commutative diagram with pushout squares
     $$
         \begin{tikzcd}
            \Brig_{S}\setminus \lbrace 0\rbrace\arrow[r,""]\arrow[d,""]&\Arig_{S}\setminus \lbrace 0\rbrace\arrow[r,""]\arrow[d,""]& 0\arrow[d,""]\\
				\Brig_{S}\arrow[r,""]&\Arig_{S}\arrow[r,""]&\frac{\Arig_{S}}{\Arig_{S}\setminus \lbrace 0\rbrace}.
		\end{tikzcd}
        $$
        In particular, we have
        $$
           \frac{\Brig_{S}}{\Brig_{S}\setminus \lbrace 0\rbrace}\simeq \frac{\Arig_{S}}{\Arig_{S}\setminus \lbrace 0\rbrace}
        $$
        inside $\RigSH^{\eff}(S)$. Thus, condition (iii) in Lemma \ref{lem-thom} is equivalent to $\Sigma_{\Arig_S}1_{\Arig_S}$ being $\otimes$-invertible.

        Finally, let us look at the diagram
         $$
         \begin{tikzcd}
            \Arig_{S}\setminus \lbrace 0\rbrace\arrow[r,""]\arrow[d,""]&\Arig_{S}\arrow[r,""]\arrow[d,"\infty"]& 0\arrow[d,""]\\
				\Arig_{S}\arrow[r,"0"]&\PP^{1}_{S}\arrow[r,""]&(\PP^{1}_{S},\infty),
		\end{tikzcd}
        $$
        where $(\PP^{1},\infty)$ is defined as the pushout of the right square. All squares in the diagram are a pushout thus we have 
        $$
            (\PP^{1}_{S},\infty)\simeq  \frac{\Arig_{S}}{\Arig_{S}\setminus \lbrace 0\rbrace}.
        $$
\end{rem}

\begin{defi}
	We define the category of \textit{rigid analytic motivic spectra over $S$}
	\[
		\RigSH(S,\Vcal)\coloneqq\RigSH^{\eff}(S,\Vcal)[(\PP^{1}_{S},\infty)^{-1}].
	\]
\end{defi}

\begin{rem}
\label{rem-RigSH-mon}
    As remarked in \cite[\S 6.1]{Hoy-EQ}, the definition of inversion along an element yields for a morphism of rigid spaces $f\colon S\rightarrow T$ an equivalence
    $$
        \RigSH^{\eff}(S,\Vcal) \otimes_{\RigSH^{\eff}(S,\Vcal)}\RigSH(T,\Vcal)\simeq \RigSH(S,\Vcal)
    $$
    via $f^*$ as symmetric monoidal categories. In particular, the argumentation of the proof of Lemma \ref{lem-effective-coeff} shows that $\RigSH(-,\Vcal)$ again satisfies projection formula, smooth base change, and localisation. We refer to \cite[Prop. 8.5]{DrewMHM} for a rigorous proof of this statement. Furthermore, by construction $(\PP^{1}_{S},\infty)$ is $\otimes$-invertible inside $\RigSH(S,\Vcal)$.

    Lastly, let us remark that by design $\RigSH(S,\Vcal)$ is a commutative $\Vcal$-algebra in $\PrL$ \cite[Def. 2.6]{Robalo-Thesis} as $\RigSH^{\eff}(S,\Vcal)\in \CAlgPr_{/\Vcal}$ by Remark \ref{rem-change-of-coeff}.
\end{rem}

Using this definition, we can conclude that $X\mapsto \RigSH(X^{\an})$, for any $B$-scheme $X$, is a stable homotopy functor \cite{AyoubThesis}  (see also \cite[Prop. 5.11]{DrewMHM}), i.e. it satisfies the properties (1)-(6) listed in \cite[\S 1.4.1]{AyoubThesis}.\par
In particular, \cite[Scholie 1.4.2]{AyoubThesis} implies the existence of a six-functor formalism on the restriction of $\RigSH$ to quasi-projective $K$-schemes. Using the results of Cisinksi-Deglise, we can extend this to all separated finite type $B$-schemes \cite[Thm. 2.4.50]{CD1}.

\begin{thm}
\label{thm-6ff}
The functor\footnote{The functoriality follows similarly as in \cite[\S 9.1]{RobaloDiss}.} 
\begin{align*}
	\RigSH_{\Vcal}\colon (\Sch^{\textup{sep,ft}}_{B})\op\rightarrow \CAlgPr_{/\Vcal},\quad X\mapsto \RigSH(X^{\an},\Vcal),\quad f\mapsto f^{*}
\end{align*}
satisfies Nisnevich descent and extends to a full six-functor formalism in the sense of Liu--Zheng \cite{Liu-Zheng}. Further, by passing to homotopy categories this functor defines a motivic triangulated functor in the sense of Cisinski--Déglise \cite{CD1}.
\end{thm}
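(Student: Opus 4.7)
The plan is to reduce the statement to the abstract six\hyp{}functor machinery of \cite[\S 1.4]{AyoubThesis}, to \cite{CD1} for the extension beyond the quasi-projective setting, and to \cite{Liu-Zheng} for the $\infty$-categorical upgrade. Concretely, I would verify that the presheaf $X\mapsto \RigSH(X^{\an},\Vcal)$ on $\Sch_B^{\textup{sep,ft}}$ is a stable homotopy $2$-functor in Ayoub's sense, i.e.\ it satisfies the six properties of \cite[\S 1.4.1]{AyoubThesis}: existence of $f^*\dashv f_*$, symmetric monoidality of $f^*$, existence of a left adjoint $f_\sharp\dashv f^*$ for smooth $f$ with smooth base change and projection formula, $\AA^1$-homotopy invariance, $(\PP^1,\infty)$-stability, and Voevodsky's gluing/localisation for closed immersions.

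Most of these axioms have in fact already been established for $\RigSH(-,\Vcal)$ on the category of rigid spaces themselves: Nisnevich descent in Proposition~\ref{prop-nisnevich-descent-RigH}, smooth base change and projection formula in Proposition~\ref{prop-smooth-BC-and-PF} upgraded to $\Vcal$-coefficients via Lemma~\ref{lem-effective-coeff} and Remark~\ref{rem-RigSH-mon}, the localisation sequence in Proposition~\ref{prop-localisation}, $\AA^1$-invariance by construction, and $(\PP^1,\infty)$-stability by definition of $\RigSH$. To transfer these to the algebraic side one uses that the analytification functor of Lemma~\ref{analytification-functor--lem} preserves smooth morphisms, open and closed immersions, Nisnevich covers, the affine line, and the projective line, so that each of Ayoub's axioms pulls back along $(-)^{\an}$ from the rigid to the algebraic setting; Nisnevich descent of $\RigSH_\Vcal$ itself is then immediate from Proposition~\ref{prop-nisnevich-descent-RigH}.

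With the axioms in hand, \cite[Scholie 1.4.2]{AyoubThesis} produces the exceptional adjunction $f_!\dashv f^!$ together with the usual compatibilities on quasi\hyp{}projective $B$\hyp{}schemes, and \cite[Thm.~2.4.50]{CD1} extends this to all separated morphisms of finite type by a Nagata compactification argument, yielding the motivic triangulated category structure claimed in the second assertion. To obtain the full $\infty$\hyp{}categorical six functor formalism in the sense of \cite{Liu-Zheng}, I would feed the verified adjointability and base change data into their construction, which assembles a symmetric monoidal $(\infty,2)$\hyp{}functor on the category of correspondences. The main obstacle is bookkeeping rather than mathematics: one must carefully track that each compatibility between analytification, smoothness, Nisnevich covers, and projective geometry is preserved under passage to $\Vcal$\hyp{}coefficients (handled uniformly by Lemma~\ref{lem-effective-coeff} and Remark~\ref{rem-RigSH-mon}), and that the proper base change implicit in the Liu\hyp{}Zheng formalism is a consequence of the Ayoub axioms rather than an additional input.
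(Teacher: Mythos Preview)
Your proposal is correct and follows essentially the same route as the paper: verify Ayoub's stable homotopy $2$-functor axioms using the results already established for $\RigSH(-,\Vcal)$ on rigid spaces (Propositions~\ref{prop-smooth-BC-and-PF}, \ref{prop-nisnevich-descent-RigH}, \ref{prop-localisation}, Lemma~\ref{lem-effective-coeff}, Remark~\ref{rem-RigSH-mon}), pull them back along analytification, and invoke the abstract machinery. The only notable difference is in the packaging of the $\infty$-categorical lift: the paper routes through \cite[Prop.~A.5.10]{Mann-Thesis} rather than feeding directly into Liu--Zheng, and in doing so is explicit that the remaining input is that $f_*$ admits a right adjoint for proper $f$, which it checks first for $\Vcal=\Sp$ via \cite[Prop.~5.17]{DrewMHM} and then for general $\Vcal$ by base change in $\PrL$; you subsume this under ``consequence of the Ayoub axioms,'' which is acceptable but less transparent.
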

\begin{proof}
    Nisnevich descent follows from Proposition \ref{prop-nisnevich-descent-RigH}.

    By \cite[Prop. 5.11]{DrewMHM} together with Remark \ref{rem-RigH-monoidal}, Lemma \ref{lem-effective-coeff}, Proposition \ref{prop-localisation}, and Lemma \ref{lem-thom} the composition of $\RigSH_{\Vcal}$ with the homotopy functor yields a stable homotopy $2$-functor in the sense of \cite[\S 1.4.1]{AyoubThesis}. Thus, using that any finite type separated $B$-scheme admits a Nagata compactification \cite[Thm. 4.1]{NagataComp}, we can apply \cite[Thm. 2.4.26]{CD1} and \cite[Prop A.5.10]{Mann-Thesis} to obtain a pre-six-functor formalism in the sense of \cite{Mann-Thesis}. To lift this to a full six-functor formalism, we are left to show that for any proper map of separated finite type $B$-schemes $f\colon X\rightarrow Y$ the functor $f_*$ admits a right-adjoint \cite[Prop. A.5.10]{Mann-Thesis}. 
    
    Let us assume that $\Vcal\simeq \Sp$, then this follows from \cite[Prop. 5.17]{DrewMHM}. Now let $\Wcal$ be a symmetric monoidal stable presentable category. Since $f_{*}$ is a morphism inside $\Pr^{L}$, so is the right-adjoint $f_{*}\otimes \id_{\Wcal}$ of $f^{*}\otimes \id_{\Wcal}$ \cite[Lem. 8.4]{DrewMHM}. In particular, $f_{*}\otimes \id_{\Wcal}$ admits a right-adjoint.
\end{proof}

\begin{rem}[A note on hypercompleteness and change of topology]
    Let $\tau$ denote either the Nisnevich or \'etale toplogy.
    Consider the functor 
    \[
        \RigSH_{\tau,\Vcal}^{(\wedge)}\colon X\mapsto \Sh^{\AA^1(,\wedge)}_{\tau}(\SmRig_X,\Vcal)[(\PP^1_S,\infty)^{-1}],\quad f\mapsto f^*.
    \]
    Then $\RigSH_{\tau,\Vcal}^{\AA^1(,\wedge)}$ satisfies the axioms of \cite[\S 1.4.1]{AyoubThesis}. For consistency, we write $\RigSH^{\wedge}_{\tau}(X,\Vcal)\coloneqq \RigSH_{\tau,\Vcal}^{(\wedge)}(X)$.

     We will not prove this result in its entirety as most of it is just following verbatim the proof of Theorem \ref{thm-6ff}.  For $\tau=\et$, we can follow the proof in its entirety. In the hypercomplete case, the only instance where we have to be careful is in the proof of Lemma \ref{lem.i.compatible}. We have to replace the sheafification in the proof by hypersheafification. However, the formulas stay the same \cite[Cor. 6.5.3.13]{HTT} --- see also the beginning of \cite[\S 6.5.3]{HTT}.
\end{rem}

The reason we care about hypercompleteness is the following result. 

\begin{prop} \label{Prop:hypersheaves-derived-category}
    Let $\Lambda$ be a ring. Let $(\Ccal,\tau)$ be a site whose underlying category $\Ccal$ is discrete. Assume that $\Ccal$ has enough points. Then the canonical functor $\mathrm{Ch}(\Sh_\tau(\Ccal,\Mod(\Lambda))) \to \PSh(\Ccal,\Dcal(\Lambda))$ induces a t-exact equivalence
    \[ 
        \Dcal_\tau(\Ccal,\Lambda)  \To[\simeq] \Sh_{\tau}^{\wedge}(\Ccal,\Dcal(\Lambda)).
    \]
\end{prop}
\begin{proof}
    This works similarly as \cite[Prop. 7.1]{Scholze6FF}.
\end{proof}

In particular, we have $\Dcal(\Sh_\tau(\SmRig_X,\Mod(\Lambda)))\simeq \Sh_\tau^\wedge(\SmRig,\Dcal(\Lambda)).$ This will play a role later, when we prove rigidity in Section~\ref{sec:rigidity}.

\subsection{The relation with $\PP^1$-spectra}
\label{sec-RigSH-as-spectra}

In this subsection, we shortly want to relate our definition of the stable motivic homotopy category with the more classical approach via $\PP^1$-spectra. This will be an easy consequence from our constructions and the analytification functor
$$
    (-)^{\an}\colon \textup{SH}^{\eff}(B)\to \RigSH^{\eff}(B^\an).
$$

Let us first recall what we mean with $\PP^1$-spectra following Hovey \cite{Hovey}.\linebreak A slightly more thorough discussion can be found in \cite[\S 2.3]{RobaloDiss}. Let $\Ccal$ be a combinatorial simplicial symmetric monoidal model category, i.e. a presentable symmetric monoidal category. For any (cofibrant) object $X\in\Ccal$, in fact for any left Quillen endofunctor, one can define the model category of $X$-spectra in $\Ccal$, denoted by $\Sp^{\NN}(\Ccal, X)$. The underlying $1$-category consists of \textit{$X$-spectra} $F$, which are sequences $(F_0,F_1,\ldots)$ in $\Ccal$ together with morphisms $X\otimes F_n\rightarrow F_{n+1}$ for all $n\in \NN_0$. The notion of morphism of $X$-spectra is the obvious one. We endow $\Sp^{\NN}(\Ccal,X)$ with the stable model structure \cite[\S 3]{Hovey}. Let us remark that in general $\Sp^{\NN}(\Ccal,X)$ is not symmetric monoidal. For this reason one usually passes to \textit{symmetric spectra} together with the stable model structure, denoted by $\Sp^\Sigma(\Ccal,X)$. We do not want to go more into detail concerning $\Sp^\Sigma(\Ccal,X)$ and refer to \cite[\S 7]{Hovey}. It is important to remark that the associated categories via the simplicial nerve of $\Sp^\Sigma(\Ccal,X)$ and $\Sp^\NN(\Ccal,X)$ are not equivalent in general. However, as discussed in \cite[\S 10]{Hovey}, one can see that the only obstruction is the cyclic permutation $(123)\in \Sigma_3$ self map of $X^{\otimes 3}\coloneqq X\otimes X\otimes X$. To be more precise, Hovey shows that there is a zig-zag of Quillen equivalences between $\Sp^\Sigma(\Ccal,X)$ and $\Sp^\NN(\Ccal,X)$ if the cyclic permutation on $X\otimes X\otimes X$ is homotopic to the identity. In this case $X$ is called \textit{symmetric}. Moreover, Robalo shows that if $X$ is symmetric, we have an equivalence 
$$
    \Ccal[X^{-1}]\To[\simeq] \Sp^\Sigma(\Ccal,X)
$$
as presentable symmetric monoidal categories \cite[Thm. 2.26]{Robalo-Thesis}.

Now let us come back to our situation and show how we can interpret $\RigSH$ as $\PP^1$-spectra. For this consider the category $\RigH(B^\an)^{\wedge}_*\coloneqq \RigH(B^\an,\Spc_*)$ with the induced $\otimes$-product via Remark \ref{rem-change-of-coeff}. We also denote by $\Gm\coloneqq \Arig\setminus\lbrace 0\rbrace$ the analytic affine line without $0$.

\begin{lem}
\label{lem-P1-spectrum}
    The pointed projective line $(\PP^1,\infty)\in \RigH(B^\an)^{\wedge}_*$ is symmetric and equivalent to $S^1\otimes (\Gm,1)$.

    In particular, we have 
    $$
        \RigSH(B^{\an})\simeq \Sp^\NN(\RigH(B^\an)^{\wedge}_*,(\PP^1,\infty)).
    $$
\end{lem}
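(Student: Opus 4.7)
The plan splits into three stages: identify $(\PP^{1},\infty)$ with the Tate circle, verify symmetry, and conclude via Hovey–Robalo.

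For the first stage, the pushout diagram displayed earlier in this subsection exhibits $(\PP^{1}_{B^{\an}},\infty)$ as the cofibre of the open immersion $\Gm\hookrightarrow\Arig_{B^{\an}}$ computed inside $\RigH(B^{\an})^{\wedge}_{*}$. Since $\Arig_{B^{\an}}$ becomes equivalent to the basepoint after $\Arig$-localisation, this cofibre is canonically the reduced suspension $\Sigma(\Gm,1)\simeq S^{1}\otimes(\Gm,1)$, giving the claimed Tate-circle decomposition.

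For the second stage, I would argue that the cyclic permutation on $(\PP^{1},\infty)^{\otimes 3}$ is homotopic to the identity in $\RigH(B^{\an})^{\wedge}_{*}$ by adapting the classical Morel–Voevodsky argument. That argument realises the cyclic permutation via explicit $\textup{SL}_{2}(\ZZ)$-matrices acting on $\PP^{1}$ (equivalently, via the linear $\Sigma_{3}$-action on $\AA^{3}$) and exploits the fact that elementary matrices $I+tE_{ij}$ are $\Arig$-homotopic to the identity through linear scaling of the parameter $t\in\Arig$. All ingredients in the classical proof --- Nisnevich descent, $\Arig$-homotopies, and the matrix identities --- are formal in nature and carry over verbatim from the algebraic to the rigid analytic setting. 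This symmetry step is the technical heart of the argument.

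Finally, with symmetry of $(\PP^{1},\infty)$ established, Robalo's theorem \cite[Thm.~2.26]{Robalo-Thesis} provides an equivalence
\[
    \RigH(B^{\an})^{\wedge}_{*}[(\PP^{1},\infty)^{-1}]\;\simeq\;\Sp^{\Sigma}\bigl(\RigH(B^{\an})^{\wedge}_{*},(\PP^{1},\infty)\bigr),
\]
and Hovey's comparison \cite[\S 10]{Hovey} upgrades this to an equivalence with $\Sp^{\NN}(\RigH(B^{\an})^{\wedge}_{*},(\PP^{1},\infty))$. To match the left-hand side with $\RigSH(B^{\an})$, note that by definition $\RigSH(B^{\an})=\RigSH^{\eff}(B^{\an},\Sp)[(\PP^{1},\infty)^{-1}]$ and $\RigSH^{\eff}(B^{\an},\Sp)=\RigH(B^{\an})\otimes\Sp$ is the $S^{1}$-stabilisation of $\RigH(B^{\an})^{\wedge}_{*}$. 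Since invertibility of $X\otimes Y$ in a symmetric monoidal $\infty$-category implies invertibility of each factor, inverting $(\PP^{1},\infty)\simeq S^{1}\otimes(\Gm,1)$ already inverts $S^{1}$, so the $S^{1}$-stabilisation is subsumed and the two presentations coincide.
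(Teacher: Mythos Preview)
Your argument is correct, but the paper takes a different and shorter route for the first two stages. Rather than redoing the Tate decomposition and the symmetry argument directly in the rigid setting, the paper observes that the analytification functor $(-)^{\an}\colon \textup{H}(B)^{\wedge}_{*}\to \RigH(B^{\an})^{\wedge}_{*}$ is symmetric monoidal and colimit-preserving (by the results of \S\ref{sec.change.of.coeff.monoidal} and Lemma~\ref{analytification-motivic-spaces--lem}), and that it sends the algebraic $(\PP^{1},\infty)$ to the rigid one. Symmetry of $(\PP^{1},\infty)$ and the splitting $(\PP^{1},\infty)\simeq S^{1}\otimes(\Gm,1)$ then follow immediately by transporting Voevodsky's result \cite[Lem.~4.4]{voevodsky-icm-1998} and Morel--Voevodsky's \cite[Lem.~2.15]{MV1} through this functor---no matrix homotopies need to be rerun.

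Your direct approach has the virtue of being self-contained and of not relying on the analytification functor, which matters if one wants the statement over a base $S$ that is not of the form $B^{\an}$ for an algebraic $B$. The paper's approach, on the other hand, is essentially a one-liner once monoidality of analytification is in place, and it makes transparent that every such statement in $\textup{H}(B)$ automatically holds in $\RigH(B^{\an})$. Your third stage is spelled out more carefully than in the paper (which simply writes ``Therefore''); the observation that invertibility of $S^{1}\otimes(\Gm,1)$ forces invertibility of $S^{1}$ in a symmetric monoidal category is exactly the point needed, and it is good that you make it explicit.
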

\begin{proof}
    Combining our discussion in Section \ref{sec.change.of.coeff.monoidal} and Lemma \ref{analytification-motivic-spaces--lem}, we see that  $(-)^{\an}$ is a monoidal functor. The cyclic permutation of $(\PP^1,\infty)^{\otimes 3}\in  \textup{H}(B)^\wedge_*$ is equivalent to the identity \cite[Lem. 4.4]{voevodsky-icm-1998}. In particular, the same is true for $(\PP^{1},\infty)^{\an}$. As the analytification functor preserves colimits, we see that $(\PP^{1},\infty)^{\an}\simeq (\PP^1,\infty)\in \RigH(B^\an)^{\wedge}_*$.

    Moreover, by the same argument we have 
    $$
        (\PP^1,\infty)\simeq S^1\otimes(\Gm,1)\in \RigH(B^\an)^{\wedge}_*
    $$
    \cite[Lem. 2.15]{MV1}. Therefore, we have 
$$
    \RigSH(B^\an)\simeq \RigH(B^\an)^{\wedge}_*[(\PP^1,\infty)^{-1}],
 $$
as presentable symmetric monoidal categories.
\end{proof}

\begin{cor}
    There is an equivalence of presentable symmetric monoidal categories 
    $$
        \RigSH(B^{\an},\Vcal)\simeq \Sp^\NN(\RigSH^{\eff}(B^\an,\Vcal),(\PP^1,\infty)).
    $$
\end{cor}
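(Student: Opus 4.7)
The strategy is to repeat the argument of Lemma~\ref{lem-P1-spectrum} with coefficients in $\Vcal$ instead of $\Sp$. By construction,
\[
    \RigSH(B^{\an},\Vcal) = \RigSH^{\eff}(B^\an, \Vcal)[(\PP^1, \infty)^{-1}],
\]
so the task is to identify this localisation with the $\infty$-category of $\NN$-indexed $(\PP^1,\infty)$-spectra in $\RigSH^{\eff}(B^\an,\Vcal)$.

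The key input, as in the proof of Lemma~\ref{lem-P1-spectrum}, is that $(\PP^1,\infty)$ is a symmetric object of $\RigSH^{\eff}(B^\an,\Vcal)$. I would first note that there is a symmetric monoidal colimit-preserving base-change functor $\RigH(B^\an)^{\wedge}_* \to \RigSH^{\eff}(B^\an,\Vcal)$, obtained from the pointing functor together with the $\otimes$-product with the unit $\mathbf{1}_\Vcal$ via Remark~\ref{rem-change-of-coeff}. By Lemma~\ref{lem-P1-spectrum}, the cyclic permutation on $(\PP^1,\infty)^{\otimes 3}$ is already homotopic to the identity in $\RigH(B^\an)^{\wedge}_*$; since any symmetric monoidal functor preserves this property, the same holds for the image of $(\PP^1,\infty)$ in $\RigSH^{\eff}(B^\an,\Vcal)$.

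With symmetry in hand, I would invoke Robalo's theorem \cite[Thm. 2.26]{Robalo-Thesis}: for any symmetric object $X$ in a presentable symmetric monoidal $\infty$-category $\Ccal$ there is an equivalence $\Ccal[X^{-1}] \simeq \Sp^\Sigma(\Ccal,X)$ of presentable symmetric monoidal $\infty$-categories. Applied to $\Ccal = \RigSH^{\eff}(B^\an,\Vcal)$ (which is presentable symmetric monoidal by Remark~\ref{rem-RigSH-mon}) and $X = (\PP^1,\infty)$, this yields
\[
    \RigSH(B^\an,\Vcal) \simeq \Sp^\Sigma(\RigSH^{\eff}(B^\an,\Vcal),(\PP^1,\infty)).
\]
Finally, by the discussion preceding Lemma~\ref{lem-P1-spectrum} (citing Hovey \cite{Hovey}), symmetry of $(\PP^1,\infty)$ provides a zig-zag of Quillen equivalences between the symmetric and $\NN$-indexed spectra, which passes to an equivalence of underlying $\infty$-categories. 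Concatenating the two equivalences gives the desired identification.

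The only genuine point to be checked is that the model-categorical inputs of Robalo and Hovey transfer to the presentable $\infty$-category $\RigSH^{\eff}(B^\an,\Vcal)$. This is automatic since Robalo's statement is intrinsically $\infty$-categorical and Hovey's zig-zag is set up precisely in this generality; no new work beyond invoking symmetry is required. Thus the main (and essentially only) obstacle is the verification that the base-change functor used above is symmetric monoidal, which is just the monoidality of the localisation $\PSh(\SmRig_{S},\Spc_*) \to \RigSH^{\eff}(S,\Vcal)$ underlying Remark~\ref{rem-change-of-coeff}.
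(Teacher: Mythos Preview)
Your proof is correct and follows essentially the same approach as the paper: the paper's one-line proof invokes Lemma~\ref{lem-P1-spectrum} for the symmetry of $(\PP^1,\infty)$, the monoidality of change of coefficients from Section~\ref{sec.change.of.coeff.monoidal} to transfer it, and the Hovey/Robalo discussion at the start of the subsection to conclude---precisely the ingredients you spell out. Your expansion makes explicit the symmetric monoidal functor $\RigH(B^\an)^{\wedge}_* \to \RigSH^{\eff}(B^\an,\Vcal)$ that the paper leaves implicit, but the logical structure is identical.
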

\begin{proof}
    This follows from Lemma \ref{lem-P1-spectrum}, the monoidality in Section \ref{sec.change.of.coeff.monoidal} and the discussion in the beginning of this subsection.
\end{proof}

\section{Rigidity for $\AA^1$-motives}
\label{sec:rigidity}
As an application of our theory, we want to establish a rigidity result, similar to Bam\-bozzi--Vezzani \cite{BaVe} and Ayoub--Gallauer--Vezzani \cite{AGV}. The benefit of the $\AA^1$-invariant theory is that we obtain also a mod $p$ rigidity for rigid spaces over $\QQ_p$, whereas the references only show mod $\ell$ rigidity for $\ell\neq p$. This is a major improvement that surprisingly does not need any new techniques but the six-functor formalism established in Theorem \ref{thm-6ff}. The main observation is that if $K$ is a nonarchimedean field in mixed characteristic $(0,p)$, then \'etale cohomology with mod $p^n$-coefficients is in fact $\AA^1$-invariant \cite{Berkovich}. We will follow \cite{AGV} closely and elaborate at critical points, where one has to be careful in the mod $p$ setting. Moreover, we establish also in the mod $\ell$ setting an equivalence between the $\AA^1$-invariant and the $\BB^1$-invariant theory. 

Contrary to \cite{AGV}, we have to work solely with hypercomplete \'etale sheaves as we cannot prove continuity for $\AA^1$-motives at the moment. Moreover, we cannot establish a rational $p$-adic realisation functor, only an integral version. This is due to the fact that $\RigDA_{\et}^{\AA^1}$ is not compactly generated since $\AA^1$ is not quasi-compact. At the moment, we do not know how to circumvent this problem.

\begin{assumption}
Throughout this section let us fix an integer $N$ and let $\Lambda$ be a $\ZZ/N\ZZ$-module. Moreover, let $K$ be a nonarchimedean field of residue characteristic $p\geq 0$ and we assume that  $\mathrm{char}(K) = 0$ or that $p\nmid N$. Moreover, we will denote by $X$ a rigid space over $K$.

\end{assumption}

 \begin{notation}
     To stay consistent with the literature and differentiate between $\AA^1$- and $\BB^1$-homotopy theory, we denote 
     \[
         \RigDA_{\et}^{\AA^1,\wedge,\eff}(X,\Lambda) \coloneqq \Sh^{\AA^1,\wedge}_{\tau}(\SmRig_X,\Lambda)
     \]
     and by $\RigDA_{\et}^{\AA^1,\wedge}(X,\Lambda)$ its $\PP^1$-stabilisation. Similarly, for the $\BB^1$-invariant version.
 \end{notation}

\begin{defi}
    We say that a rigid space $X$ over $K$ is $\Lambda$-admissible if there exists an open covering $(X_i)_i$ of $X$ such that each $X_i$ has finite Krull dimension and finite punctual virtual $\Lambda$-cohomological dimension, cf.\ \cite[Def. 2.4.14]{AGV}.
\end{defi}

The main reason we need a condition on the local punctual virtual codimension is the following result.

\begin{lem}[\protect{\cite[Lem.~2.4.5]{AGV}}]
    Let $X$ be a $\Lambda$-admissible rigid space over $K$. Then Postnikov towers in $\Dcal(X_{\et},\Lambda)\simeq \Sh^{\wedge}(X_{\et},\Dcal(\Lambda))$ converge.
\end{lem}

The goal of this section is to prove the following rigidity result, relying on Theorem~\ref{thm-6ff}.

\begin{thm}(Rigidity)
\label{Thm:Rigidity}
    The natural functor
	\[
		\Dcal(X_{\et},\Lambda) \To[\iota^*] \RigDA_{\et}^{\AA^1,\wedge(,\eff)}(X,\Lambda)
	\]
	is an equivalence.
\end{thm}

We immediately obtain two important results. First, we obtain the following comparison between $\AA^1$-motives and $\BB^1$-motives.

\begin{cor}
\label{cor:comparison-with-B1-motives}
    Assume that $p\nmid N$. Then the fully faithful embedding
    \[
		\RigDA^{\BB^1,\wedge(,\eff)}_{\et}(X,\Lambda) \subseteq \RigDA^{\AA^1,\wedge(,\eff)}_{\et}(X,\Lambda)
	\]
    is an equivalence.
\end{cor}
\begin{proof}
    This follows from Theorem \ref{Thm:Rigidity} and the rigidity theorem in the $\BB^1$-case \cite[Thm.~2.10.3]{AGV}.
\end{proof}

\begin{cor}
\label{Thm:Real}
     Let $\ell$ be any prime. Assume that $\mathrm{char}(K) = 0$ or that $p\neq\ell$.
      Then there exists a symmetric monoidal realisation functor
	\[
		\hat{\rho}_\ell\colon \RigDA_{\et}^{\AA^1,\wedge}(X,\ZZ)\to \Dcal(X_{\et},\ZZ_{\ell})\coloneqq \lim_{n\geq 0}\Dcal(X_{\et},\ZZ/\ell^n).
	\]
\end{cor}
\begin{proof}
    The proof of the $\ell$-adic realisation, not given in this article, follows verbatim \cite[\S 5]{AyoubEt} using Theorem \ref{Thm:Rigidity}.
    Here we use the identification of $\Dcal(X_{\et}, \ZZ_\ell)$ with the diagram category of Ayoub; for the limit description see also \cite[\S 1-2]{Liu-Zheng-Adic}.
\end{proof}

While for $\ell\neq p$ this result is not new, we obtain a $p$-adic realisation functor if we work with $\AA^1$-motives. Moreover, the in the case $\ell\neq p$, we recover the $\ell$-adic realisation of \cite{BaVe}.
In the case $p\mid N$, we still obtain an adjunction between $\BB^1$-motives and $\AA^1$-motives.

\begin{prop}
    There is an adjunction
    $$
	 \begin{tikzcd}
		\LB \colon  \RigDAA(X,\Lambda)\arrow[r,"",shift left = 0.3em]&\arrow[l,"",shift left = 0.3em]\RigDAB(X,\Lambda)\colon j,
	\end{tikzcd}
	$$
    whose right-adjoint functor is fully faithful. In particular, the analytification functor of \cite{AyoubRig} is obtained by $\LB\circ \an^*$.
\end{prop}
\begin{proof}
    The category $\RigDABeff(X,\Lambda)$ is a full subcategory of $\RigDAAeff(X,\Lambda)$ since every $\BB^1$-invariant presheaf is $\AA^1$-invariant. Since the categories are presentable and $\BB^1$-invariant sheaves are closed under limits, the inclusion functor admits a left-adjoint $\LB$. This localisation passes to the stabilised categories \cite[Lem.~1.5.4]{AnnalaIwasa}.
\end{proof}

The proof of Theorem~\ref{Thm:Rigidity} will follow the approach of \cite{AGV}. To this end, we will work with the $\ell$-completed version of $\RigDA$, simply denoted by $\RigDA^{\AA^1}_{\et}(-,\Lambda)_{\lcomp}$, cf.\ \cite[\S 2.10]{AGV}.

\subsection{Some remarks on the assumption in Theorem~\ref{Thm:Rigidity}}

In Theorem~\ref{Thm:Rigidity}, we make assumptions on the characteristic of $K$ and the order of the coefficients. This is indeed a natural restriction that we cannot get around as the next observations will show. In particular, it is not possible to represent \'etale mod $p$ cohomology theories in equicharacteristic $p$.


\begin{prop}
	If $\mathrm{char}(K)=p>0$, then the category $\RigDA_{\et}(X,\ZZ)$ is $\ZZ[1/p]$-linear. 
\end{prop}
\begin{proof}
	The proof is as usual, cf. \cite[Lem.~3.10]{AyoubEt}. We use the fact that $\GG_{\aup}$ as a sheaf of abelian groups is equivalent to $0$. 
	Indeed, the proof is as in the classical case by using multiplication on the global sections
	$$
		\GG_{\aup}(X\times_{K}\AA^{1,\an}_{K}) = \Gamma(X,\Ocal_X)\hat{\otimes}_{K}K\lbrr T\rbrr,
	$$
	which contains the polynomial ring. Thus the map 
    \[
        \GG_{\aup}(X)\to  \Gamma(X,\Ocal_X)\hat{\otimes}_{K}K\lbrr T\rbrr,\ x\mapsto x\otimes T
    \]    
    yields a homotopy between the identity of $\GG_{\aup}(X)$ and the zero map. Now the Artin--Schreier sequence shows that $\ZZ/p\ZZ\simeq 0$ and hence  multiplication by $p$ equivalence in $\RigDA_{\et}(X,\ZZ)$.
\end{proof}

\begin{rem}
	Note that in the proof of the proposition above, we only use the Artin--Schreier sequence and $\GG_{a}$. If one has an Artin--Schreier sequence for $\GG_{\aup}^{+}$, then the same result holds again, for example in the tame topology \cite{HuebnerTame,HuebnerSchmidt}. Indeed, the same proof applies using that $\GG_{\aup}^{+}(\Arig_{X}) = \Gamma(X,\Ocal^{+}_{X})\hat{\otimes}_{\Ocal_{K}}\Ocal_{K} \lbrr T\rbrr$.
\end{rem}

\begin{rem}
    The same argument shows that one has to be careful with tilting in $\Arig$-homotopy theory.
	There cannot be a tilting equivalence between $\RigDA_{\et}^{\AA^1}(K,\ZZ)$ and $\RigDA_{\et}^{\AA^1}(K^{\flat},\ZZ)$ induced by the equivalence $K_{\et}\simeq K^{\flat}_{\et}$ on \'etale sites. Indeed, assume that such an equivalence exists. Since $\ZZ/p\ZZ$ is $\AA^{1}$-invariant as an \'etale sheaf, we see that $\ZZ/p\ZZ$ is contained naturally in $\RigDA^{\AA^1,\eff}_{\et}(K,\ZZ)$. However, by the tilting equivalence we see that $\ZZ/p\ZZ$ fits into the Artin--Schreier sequence
	$$
		0 \to \ZZ/p\ZZ\to \GG_{\aup,K^{\flat}}\to \GG_{\aup,K^{\flat}} \to 0.
	$$
	Now note that $\GG_{\aup,K^{\flat}}\simeq_{\AA^{1}} 0$ in $\RigDA_{\et}^{\AA^1}(K^{\flat})$ and thus also $\ZZ/p\ZZ \simeq_{\AA^{1}} 0$, contradicting the fact that $\Hup^{0}_{\et}(-,\ZZ/p\ZZ)$ is non-trivial in general.
\end{rem}

\subsection{Embedding Theorem}

In this subsection, we prove that $\iota^*$ is fully faithful by exploiting the Kummer sequence as in the classical setting. We work with the completed versions as in \cite[\S 2.10]{AGV}.

\begin{thm}
\label{Thm:Embedding-Theorem}
    	The functor 
	\[
		\iota^{*}\colon \Dcal(X_{\et},\Lambda)_{\lcomp}\to \RigDA_{\et}^{\AA^1}(X,\Lambda)_{\lcomp}
	\]
	is fully faithful.
\end{thm}

The first key step is the reduction to the effective case. This will be the content of the next two lemmas.

\begin{rem}[\protect{\cite{dJvdP}}]
	The group homomorphism $(-)^{N}\colon \Ocal^{\times}\to \Ocal^{\times}$ of \'etale group schemes is surjective. Thus we obtain the Kummer sequence
	\[
		1\to \mu_{N}\to \Ocal^{\times}\to \Ocal^{\times}\to 1.
	\]
\end{rem}

\begin{lem}
\label{Lem:Tate-Twist}
    There exists a $\otimes$-invertible object $\Lambda_{\ell}(1)$ in $\Sh^{\wedge}_{\et}(X,\Lambda)_{\lcomp}$ together with a  morphism 
	\[
		\Lambda_{\ell}\to \Lambda_\ell(1)[1]
	\]
	in $\Sh^{\wedge}_{\et}(\BB_{X}\setminus \lbrace 0\rbrace,\Lambda)_{\lcomp}$ and a trivialisation after pullback along the unit section $1_X\in (\BB_{X}\setminus \lbrace 0\rbrace)(X)$. Moreover, the induced morphism 
    \[
        \Bigl(\frac{\BB^1_X}{\BB^1_X\setminus 0}[1]\Bigr)^{\wedge}_{\ell} \To \iota_X^*\bigl(\Lambda_\ell(1)[1]\bigr)
    \]
    is an equivalence in $\RigDA_{\et}^{\AA^1,\eff}(X,\Lambda)_{\lcomp}$.
\end{lem}
\begin{proof}
    The construction is the same as \cite[Lem. 2.10.8]{AGV} by Remark~\ref{Rem:Tate-twist}. The last part follows as in loc.\!\! cit.\ from the compatibility of the analytification functor and $\iota^*$, cf. Lemma~\ref{analytification-functor--lem}.
\end{proof}

\begin{cor}
\label{Cor:stab-equiv}
    The functor $\Sigma_+^{\infty}\colon \RigDA^{\AA^1,\wedge,\eff}_{\et}(K,\Lambda)_{\lcomp}\to\RigDA^{\AA^1,\wedge}_{\et}(K,\Lambda)_{\lcomp} $ is an equivalence.
\end{cor}
\begin{proof}
    This follows from the construction of the $\ell$-completion, the interpretation of $\RigSH$ as $\PP^1$-spectra, see Section \ref{sec-RigSH-as-spectra}, and Lemma \ref{Lem:Tate-Twist}, cf. \cite[Cor. 2.10.9]{AGV}.
\end{proof}

\begin{proof}[Proof of Theorem \ref{Thm:Embedding-Theorem}]
    By Corollary \ref{Cor:stab-equiv}, it is enough to check the statement for effective motives. Then by devissage, it is enough to check that any (discrete) \'etale sheaf $F$ on $X$ with $\Lambda$-coefficients is $\AA^1_X$-invariant, cf. \cite[Lem. 2.10.10]{AGV}.
    This holds true when $p\mid N$ in the $\mathrm{char}(K)=0$ case by \cite[Lem. 2.2]{Berkovich} and by \cite[Thm. 6.0.2]{dJvdP} in the $p\nmid N$ case.
\end{proof}

\subsection{Proof of the Rigidity Theorem}

We are left to show that the functor $\iota^*$ of Theorem~\ref{Thm:Rigidity} is essentially surjective. this result will crucially use two facts that we established in this article in Theorem~\ref{thm-6ff}. 

First, there exists an analytification functor $\DA_{\et}(X,\Lambda)\xrightarrow{\an}\RigDA^{\AA^1,\wedge}_{\et}(X^\an,\Lambda)$ for any separated finite type $K$-scheme $X$. 
Secondly, for any morphism $f$ between separated finite type $K$-schemes, we have the exceptional functors $f_!^\an\dashv f^{\an!}$ and $f^{\an*}\dashv f_*^\an$ satisfying base change and projection formula. Moreover, these functors are compatible with the analytification functor $(-)^{\an}$ from the above.

The following argument follows verbatim the proof of \cite[Thm. 2.10.3]{AGV}. We will recall the proof, but only up to the point where we need Theorem \ref{thm-6ff} to make the dependence clear.

Let us set $T \coloneqq \bigl(\frac{\BB^1_X}{\BB^1_X\setminus 0}[1]\bigr)^{\wedge}_{\ell}$. By Corollary \ref{Cor:stab-equiv} it is enough to treat the $T$-stable case. Moreover, since the question is local, we may assume that $X=\Spf(A)^{\rig}$ is affine and set $U=\Spec(A[\pi^{-1}])$, where $\pi\in A$ is a generator of the ideal of definition. Note that $\RigDAA(X,\Lambda)_{\lcomp}$ is generated by colimits under $\textup{M}(\Spf(B)^{\rig})/\ell^n$, where $B$ is rig-\'etale adic over $A$ satisfying the conclusion of \cite[Prop. 1.3.15]{AGV} and $n\in \NN$. As in the proof of \cite[Thm. 2.10.3]{AGV}, we may assume that there exists a smooth affine $U$-scheme $S$ and an open immersion $\nu\colon V\to S^\an$ such that $\Omega_{S/U}$ is free. Fixing a projective compactification $j\colon S\to P$ over $U$, we obtain the following commutative diagram 
\[
    \begin{tikzcd}
        V\arrow[rr,"\bar\nu", bend left = 4em] \arrow[r,"\nu"]\arrow[rd,"g",swap]& S^\an\arrow[r,"j^\an"]\arrow[d,"f^\an"] & P^\an\arrow[dl,"p^\an"]\\
         & X, &
    \end{tikzcd}
\]
where $g,f$, and $p$ denote the structure maps.

We have $\textup{M}(V) \simeq  f_\sharp^\an \nu_\sharp \Lambda$. By Theorem \ref{thm-6ff}, we see that $\textup{M}(V)$ is equivalent, up to shift and twist, to $f_!^\an\nu_\sharp\Lambda\simeq p^\an_*\bar\nu_\sharp\Lambda$. 

By Lemma \ref{Lem:Tate-Twist}, the map $\iota^*$ is stable under shifts and twists in the $T$-stable case. Thus, it remains to show that $p^\an_*\bar\nu_\sharp\Lambda/\ell^n$ is contained in its essential image.

From here on the rest is the same as in \cite{AGV}.


\bibliographystyle{alphaurl}
\bibliography{RigidSH}

\bigskip
  \footnotesize

  Christian Dahlhausen:\\  \textsc{Institut für Mathematik, Universtität Heidelberg, Im Neuenheimer Feld 205, 69120 Heidelberg, Germany}\par\nopagebreak
  \textit{E-mail address:}  \texttt{cdahlhausen@mathi.uni-heidelberg.de}

  \medskip

  Can Yaylali:\\ \textsc{Fakultät für Mathematik, Universität Duisburg-Essen, Thea-Leymann-Str. 9, 45127 Essen, Germany}\par\nopagebreak
  \textit{E-mail address:} \texttt{can.yaylali@uni-due.de}

\end{document}